\numberwithin{equation}{section}
\newtheorem{theorem}{Theorem}[section]
\newtheorem{lemma}[theorem]{Lemma}
\newtheorem{proposition}[theorem]{Proposition}
\newtheorem{remark}[theorem]{Remark}
\newcommand{\TT}{\mathbb{T}}
\newcommand{\RR}{\mathbb{R}}
\newcommand{\cO}{\mathcal{O}}
\newcommand{\CC}{\mathbb{C}}
\newcommand{\cC}{\mathcal{C}}
\newcommand{\cH}{\mathcal{H}}
\newcommand{\cR}{\mathcal{R}}
\newcommand{\ZZ}{{\mathbb Z}}
\def\cP{\mathcal{P}}
\def\Ff{\widehat{f}}
\def\Frho{\widehat{\rho}}
\def\Fphi{\widehat{\phi}}
\def\FE{\widehat{E}}
\def\Fh{\widehat{h}}
\def\FS{\widehat{S}}
\def\FG{\widehat{G}}
\def\Tf{\widetilde{f}}
\def\Trho{\widetilde{\rho}}
\def\TE{\widetilde{E}}
\def\TG{\widetilde{G}}
\def\Tphi{\widetilde{\phi}}
\def\Ff{\widehat{f}}
\def\Fphi{\widehat{\phi}}
\def\FE{\widehat{E}}
\def\Frho{\widehat{\rho}}
\def\Tf{\widetilde{f}}
\def\TE{\widetilde{E}}
\def\Trho{\widetilde{\rho}}
\def\bj{{\bf j}}
\def\Tphi{\widetilde{\phi}}
\begin{document}

\title{Landau damping and survival threshold} 

\author{Toan T. Nguyen\footnotemark[1]
}

\maketitle

\footnotetext[1]{Penn State University, Department of Mathematics, State College, PA 16802. Email: nguyen@math.psu.edu. }

\maketitle

\begin{abstract}

In this paper, we establish the large time asymptotic behavior of solutions to the linearized Vlasov-Poisson system near general spatially homogenous equilibria $\mu(\frac12|v|^2)$ with connected support on the torus $\mathbb{T}^3_x \times \RR^3_v$ or on the whole space $\RR^3_x \times \RR^3_v$,
including those that are non-monotone. 
The problem can be solved completely mode by mode for each spatial wave number, and their longtime dynamics is intimately tied to the ``survival threshold'' of wave numbers computed by 
$$\kappa_0^2 = 4\pi \int_0^\Upsilon \frac{u^2\mu(\frac12 u^2)}{\Upsilon^2-u^2} \;du$$  
where $\Upsilon$ is the maximal speed of particle velocities. 
It is shown that purely oscillatory electric fields exist and obey a Klein-Gordon's type dispersion relation for wave numbers below { and up to} the threshold, thus rigorously confirming the existence of Langmuir's oscillatory waves { for a non-trivial range of spatial frequencies in this linearized setting}. At the threshold, the phase velocity of these oscillatory waves enters the range of admissible particle velocities, namely there are particles that move at the same propagation speed of the waves. It is this exact resonant interaction between particles and the oscillatory fields that causes the waves to be damped, classically known as Landau damping. Landau's law of decay is explicitly computed and is sensitive to the decaying rate of the background equilibria. The faster it decays at the maximal velocity, the weaker Landau damping is.  Beyond the threshold, the electric fields are a perturbation of those generated by the free transport dynamics and thus decay rapidly fast due to the phase mixing mechanism. 

\end{abstract}

\tableofcontents



\section{Introduction}


Of great interest in plasma physics is to establish the large-time behavior of charged particles and their possible final states in a non-equilibrium state. For hot plasmas, collisions may be neglected, and meanfield models such as Vlasov kinetic equations to include self-consistent electromagnetic forces are widely used in the literature. Despite being one of the simplest models used in plasma physics, the Vlasov-Poisson system exhibits extremely rich physics, including phase mixing, Landau damping, plasma oscillations, and coherent structures.  

In this paper, we identify precisely the relaxation mechanism and the large time behavior of solutions to the linearized Vlasov-Poisson system near spatially homogeneous steady states of the form $\mu(e)$ with $e = \frac12|v|^2$ {being} the particle energy. The linearized Vlasov-Poisson system then reads
\begin{equation}
\label{linVP}
\left\{ \begin{aligned}
\partial_t f + v\cdot \nabla_x f + v\cdot E\mu'(e) &= 0 
\\
E = -\nabla_x \phi, \qquad - \Delta_x \phi = \rho[f]&= \int_{\RR^3} f(t,x,v)\; dv,
\end{aligned}
\right.
\end{equation}
posed {on the torus $\TT^3_x \times \RR^3_v$} or on the whole space $\RR_x^3 \times \RR_v^3$, with initial data $f(0,x,v) = f_0(x,v)$. {In \eqref{linVP}, we stress that $E(t,x)$ is self-consistently defined through $\rho[f]$.} The linearized problem can be solved completely mode by mode for each spatial frequency, see Section \ref{sec-FT}. Indeed, letting $\lambda \in \CC$ and $k\in \RR^3$ be the temporal and spatial frequencies or the dual Laplace-Fourier variables of $(t,x)$, the Laplace-Fourier transform of the electric field $E(t,x)$, {denoted by $\TE_k(\lambda)$,} can be computed by 
\begin{equation}\label{linElambda} 
\TE_k(\lambda) = \frac1{D(\lambda,k)} \TE_k^{\mathrm{free}}(\lambda)
\end{equation}
for each wave number ${k\in \RR^3\setminus\{0\}}$, where $\TE_k^{\mathrm{free}}(\lambda)$ denotes the Laplace-Fourier transform of the {\em free} electric field $E^{\mathrm{free}}(t,x)$ generated solely by the free transport dynamics\footnote{{ Explicitly, $E^{\mathrm{free}} = -\nabla_x (-\Delta_x)^{-1}\rho^{\mathrm{free}}$, with the {\em free} density $\rho^{\mathrm{free}} = \int f_0(x-vt, v)\; dv$, namely the density generated by the free transport dynamics $\partial_t f^{\mathrm{free}} + v\cdot \nabla_x f^{\mathrm{free}} =0$ with the same initial data $f_0(x,v)$.}}, while $D(\lambda,k)$ is the symbol for the linearized problem \eqref{linVP} in the spacetime frequency space, see Section \ref{sec-FT}. This symbol $D(\lambda,k)$, also known as the dielectric function, is a central function in plasma physics \cite{Trivelpiece}, defined by 
\begin{equation}\label{def-Dintro}
D(\lambda,k) = 1 - \frac{1}{|k|^2} \int_{\RR^3} \frac{ik \cdot v}{ \lambda + ik \cdot v} \mu'(e)\; dv,
\end{equation}
for each wave number ${k\in \RR^3\setminus\{0\}}$ and temporal frequency $\lambda \in \CC$. { Note that the total mass $\int \rho(t,x) \; dx = \iint f(t)\; dxdv$ is conserved in time, and so density $\Frho_k(t)$ is constant at zero frequency $k=0$ (hence, $\FE_k(t) =0$ at $k=0$). For this reason, we focus on the dispersion relation at $k\not =0$. However, throughout the paper, the analysis and the behavior of $D(\lambda,k)$ near $k=0$ play a crucial role.}

In addition, observe that for each $k\not =0$, the zeros $\lambda(k)$ of the dielectric function $D(\lambda,k)=0$ are the ``eigenvalues'' of the linearized Vlasov-Poisson system \eqref{linVP}. {Namely, for each fixed $k\not =0$, if $D(\lambda(k),k) =0$, it follows that the function 
\begin{equation}\label{def-mode}
f(t,x,v) =e^{\lambda(k)t + ik\cdot x} \frac{  ik \cdot v}{\lambda(k) +  ik \cdot  v} \mu'(e),
\end{equation}
together with the electric field $E(t,x) =-ik e^{\lambda(k)t + ik\cdot x}$, solves to the linearized Vlasov-Poisson system \eqref{linVP} (i.e. a mode solution of \eqref{linVP}) for $t\ge 0$. For this reason, $\lambda(k)$ is often referred to as dispersion relation of the linearized electric field, and the behavior of $e^{\lambda(k)t}$ gives the leading dynamics of the linearized electric field. Naturally, in the whole space, a physically relevant solution to the linearized problem consists of a superposition (or in form of a wave packet) of the above mode solutions. We also note that in the case of the torus $\TT^3_x \times \RR^3_v$, the spatial frequencies $k$ are discrete, namely $k \in \ZZ^3$. Throughout the paper, most of the analysis focus on the whole space case with $k\in \RR^3$, leaving the torus case to be treated in a few remarks, see Section \ref{sec-torus}.}

\subsection{Penrose's stable regime}\label{sec-disp}

In view of \eqref{def-Dintro} and the representation \eqref{linElambda}, three regimes follow: 

\begin{itemize}

\item $| k | \gg 1$: {\em free transport regime. }
In this case the electric field is {formally} negligible with respect to the transport part, since $D(\lambda,k) \to 1$, {as $k\to \infty$}. As a consequence, the linearized electric field is a perturbation of that generated by the free transport dynamics, which decays rapidly fast to $0$, with a speed 
proportional to $k$, exponentially if data are analytic and polynomially if data are Sobolev. This exponential damping is at the heart of Mouhot-Villani's celebrated proof of the asymptotic behavior of solutions to the nonlinear Vlasov-Poisson system in the periodic case, see \cite{MV, BMM, GNR1}. 

\item $| k | \sim 1$: {\em Penrose's stable regime.} In this regime, the electric field and the free transport are of the same
magnitude, and the plasma may or may not be stable, depending on the background profile $\mu(\cdot)$. 
It is spectrally stable if and only if $D(\lambda,k)$ never vanishes on $\Re \lambda >0$, which holds for a large class of positive radial equilibria \cite{Penrose, MV}, see also Section \ref{sec-spectral}. Under a stronger, quantitative Penrose stability condition: 
namely, { there is a positive constant $\theta_0$ so that }
\begin{equation}\label{sPenrose}
\inf_{k} \inf_{ \Re \lambda \ge 0} |D(\lambda,k)|  \ge \theta_0 >0,
\end{equation}
{ in which the infimum is taken over $\RR^3\setminus\{0\}$ in the whole space case and over $\ZZ^3\setminus\{0\}$ for the torus case,} the dynamics can again be approximated by that of the free transport and therefore the main damping mechanism in this regime is again {\em phase mixing}. Indeed, this was justified also for the nonlinear problem with analytic or Gevrey data on the torus, see \cite{MV, BMM, GNR1}. See also \cite{BMM1,HKNF2} for the screened Vlasov-Poisson system on the whole space, for which \eqref{sPenrose} holds for $k\in \RR^3$, and the free transport dynamics remains dominant. Specifically, we establish in \cite{GNR1, HKNF2} that the linearized electric field $\FE_k(t)$ in this Penrose's stable regime can be written as 
\begin{equation}\label{linEkt} 
\FE_k(t) = \FE_k^{\mathrm{free}}(t) +\FG_k\star_t  \FE_k^{\mathrm{free}}(t)
\end{equation}
for each wave number $k$, where $\star_t$ denotes the convolution in time, $\FE_k^{\mathrm{free}}(t)$ is again the free transport electric field and $\FG_k(t)$ is exponentially localized $|\FG_k(t)|\lesssim e^{-\langle kt\rangle}$, leading to a much simplified proof of the nonlinear Landau damping  \cite{GNR1} and a construction of echoes solutions for a large class of Sobolev data \cite{GNR2}. We mention that such a representation of the electric field was also established for the weakly collisional regime \cite{CLN}.

{\item $| k | \ll 1$: {\em Landau damping's regime.} It remains to comment on the low frequency regime, which we shall discuss in the next section. }

\end{itemize}

\subsection{Landau damping}

We next focus on the regime where $|k|\ll1$. It turns out that in this regime, the strong Penrose stability condition \eqref{sPenrose} never holds for {\em any non-trivial equilibria!} {Namely, there are dispersion relations $\lambda(k)$ so that $D(\lambda(k),k) =0$, but $\Re \lambda(k)\to 0$ in the limit of $k\to 0$, yielding the failure of the uniform lower bound in \eqref{sPenrose}. Describing the behavior of $\lambda(k)$ in the limit of $k\to 0$ is an important subject in plasma physics, which we shall now discuss.} Indeed, it is classical in the physical literature that at the very low frequency, plasmas oscillate and disperse with a Schr\"odinger type dispersion relation 
\begin{equation}\label{Langmuir0}\Im \lambda_\pm(k) = \pm \Big( \tau_0+ \frac{\tau_1^2}{2\tau^3_0}|k|^2 + \cO(|k|^4)\Big) \end{equation}
for $|k|\ll1$, where $\tau_j^2 =2\pi \int_{\RR} u^{2j+2} \mu(\frac{1}{2}u^2) \;du$, $j=0,1$. These oscillations are classically known as Langmuir's waves in plasma physics \cite{Trivelpiece}. Naturally, the central question is that whether such oscillations are damped. Landau in his 1946 seminal paper \cite{Landau} addressed this very issue, and managed 
to compute the dispersion relation $\lambda = \lambda_\pm(k)$ (i.e. solutions of $D(\lambda_\pm(k),k) = 0$) for Gaussians $\mu = e^{-\frac12 |v|^2}$ and later extended for any positive radial equilibria \cite{Trivelpiece}, yielding\footnote{{Here, we state the results for radial equilibria $\mu(\frac12 |v|^2)$ in three dimension. A similar formula can be derived in other dimensions, see Remark \ref{rem-monotone}.}} 
\begin{equation}
\label{Landau-law}
\Re \lambda_\pm(k) = -  \frac{\pi ^2}{\tau_0}  [u^3\mu(\frac12 u^2)]_{u = \nu_*(k)} 
 (1 + \cO(|k|)),
\end{equation}
for $|k|\ll1$, where $\nu_*(k) = \frac{\tau_0}{|k|}$. Note in particular that \eqref{sPenrose} fails as $|k|\to 0$, since $\Re \lambda_\pm(k) \to 0$ rapidly fast. {Precisely, it follows from \eqref{Landau-law} that the damping rate is polynomially small {of order $\Re\lambda_\pm(k) \sim -c_0|k|^{2N_0-3}$ in the limit of $k\to 0$} for power-law equilibria $\mu(e) \sim \langle e\rangle^{-N_0}$, and super exponentially small of order $\Re\lambda_\pm(k) \sim -c_0|k|^{-3}e^{-c_1/|k|^2}$ in the limit of $k\to 0$ for Gaussian equilibria $\mu = e^{-|v|^2}$ for some positive constants $c_0,c_1$. Note that the faster $\mu(v)$ decays, the weaker this damping rate is.} 

As a consequence of dispersion relations \eqref{Langmuir0}-\eqref{Landau-law}, the electric field is {\it not} exponentially 
decreasing (i.e. $\Re \lambda(k)\to 0$) for each mode at the very low frequency regime $|k|\ll1$, but {\it oscillatory} like a Schr\"odinger type equation.

\begin{figure}[t]
\centering
\includegraphics[scale=.6]{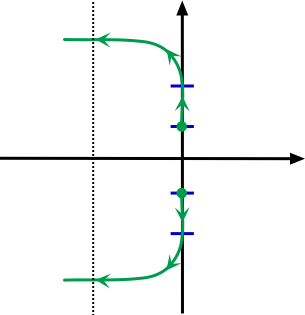}
\put(-82,84){$0$}
\put(-187,80){$\{\Re\lambda \lesssim -|k|\}$}
\put(-20,180){$\mathbb{C}$}
\put(-60,110){$i\tau_0$}
\put(-60,135){$i\sqrt{\tau_0^2 + \kappa_1^2}$}
\put(-120,175){$\lambda_+(k)$}
\put(-120,5){$\lambda_-(k)$}
\put(-65,70){$-i\tau_0$}
\put(-65,46){$-i\sqrt{\tau_0^2 + \kappa_1^2}$}
\caption{\em Depicted are the solutions $\lambda_\pm(k)$ to the dispersion relation that start from $\lambda_\pm(0)=\pm i \tau_0$ {at $k=0$}, remain on the imaginary axis and obey a Klein-Gordon's dispersion relation $\tau_*(|k|)\sim \sqrt{\tau_0^2 + |k|^2}$ for all $0\le |k|\le \kappa_0$ {up until the survival threshold $|k|=\kappa_0$ at which $\lambda_\pm(\kappa_0) = \pm i\sqrt{\tau_0^2 + \kappa_1^2}$}, and then depart from the imaginary axis {as soon as $|k|>\kappa_0$} due to Landau damping towards the phase mixing regime $\{ \Re \lambda \lesssim -|k|\}$. The group velocity $\tau_*'(k)$ is strictly increasing, while the phase velocity $\nu_*(k) = \tau_*(|k|)/|k|$ is strictly decreasing in $|k|$, with $\nu_*(0) = \infty$ and $\nu_*(\kappa_0) = \Upsilon$. 
}
\label{fig-LDthreshold}
\end{figure}

\subsection{Survival threshold}\label{sec-threshold}

As a matter of facts, the three regimes described in the previous sections apply precisely to the case when equilibria are {\em positive} for all $v\in \RR^3$. For compactly supported equilibria, we shall show that there is a finite critical wave number $\kappa_0$, {which is strictly positive and may not be small}, below which the Penrose stability condition \eqref{sPenrose} fails. See Figure \ref{fig-LDthreshold} for an illustration of the threshold. To precise this threshold, we set 
\begin{equation}\label{def-Upsilon} \Upsilon: = \sup \Big\{ |v|, \qquad \mu(\frac12|v|^2) \not =0\Big\}
\end{equation}
to be the maximal speed of particle velocities, which can be finite or infinite (e.g., compactly supported equilibria or Gaussian equilibria). We then introduce the survival wave number threshold $\kappa_0$ defined by 
 \begin{equation}\label{def-introkappa0}
\kappa_0^2 = 4\pi \int_0^\Upsilon \frac{u^2\mu(\frac12 u^2)}{\Upsilon^2-u^2} \;du.
\end{equation}
As equilibria $\mu(\cdot)$ are non negative and decay sufficiently fast as $u\to \Upsilon$, $\kappa_0^2$ is well-defined and finite. Note that $\kappa_0=0$ if $\Upsilon=\infty$ (e.g., when $\mu(\cdot)$ is a Gaussian or real analytic), while $\kappa_0$ is strictly positive and may be large if $\Upsilon<\infty$ (precisely, when $\mu(\cdot)$ is compactly supported). Throughout the paper, we consider equilibria with connected support, namely $\mu(\frac12|v|^2)>0$, whenever $|v|<\Upsilon$. 

Our main results, Theorem \ref{theo-mainLandau} below, assert that   

\begin{itemize}

\item {\em Plasma oscillations:} for $0\le |k|\le \kappa_0$, there are exactly two pure imaginary solutions $\lambda_\pm(k) = \pm i \tau_*(|k|)$ of the dispersion relation $D(\lambda,k)=0$, which obey a Klein-Gordon type dispersion relation 
$$\tau_*(|k|) \approx \sqrt{1 + |k|^2}$$ 
(and in particular, $\tau_*(|k|)$ is a strict convex function in $|k|$). These oscillatory modes experience no Landau damping $\Re \lambda_\pm(k)=0$, but disperse in space, since the group velocity $\tau_*'(k)$ is strictly increasing in $|k|$. This dispersion leads to a $t^{-3/2}$ decay of the electric field in the physical space. These oscillations are known as Langmuir's waves in plasma physics \cite{Trivelpiece}. 

In addition, the phase velocity of these oscillatory waves $\nu_*(k) = \tau_*(|k|)/|k|$ is a decreasing function in $|k|$ with $\nu_*(0) = \infty$ and $\nu_*(\kappa_0) = \Upsilon$ (the maximal speed of particle velocities). 

\item {\em Landau damping:} as $|k|$ increases past the critical wave number $\kappa_0$, the phase velocity of Langmuir's oscillatory waves enters the range of admissible particle velocities, namely $\nu_*(k) <\Upsilon$. That is, {recalling $\Upsilon$ denotes the maximal particle speed, see \eqref{def-Upsilon},
there are particles with velocity $v$ that resonate with the waves, that is $|v| = \nu_*(k)$.} This resonant interaction causes the dispersion functions $\lambda_\pm(k)$ to leave the imaginary axis, and thus the purely oscillatory modes get damped. Landau  \cite{Landau} computed this law of damping for Gaussians (and hence, $\kappa_0=0$) as reported in \eqref{Landau-law}. For the case of compactly supported equilibria $\Upsilon <\infty$, we have $\kappa_0>0$, and the Landau's law of decay can be explicitly computed, giving  
\begin{equation}\label{Landau-law2}
\Re \lambda_\pm(k) =-\frac{2\pi^2}{\kappa_1^2} [u\mu(\frac{1}{2}u^2)]_{u = \nu_*(k)} (1 + \cO(|k| - \kappa_0)),
\end{equation}
as $|k|\to \kappa_0^+$, where $\nu_*(k) = \Upsilon - \frac{2\kappa_0}{\kappa_1^2} (|k| - \kappa_0) $ for some positive constant $\kappa_1$. See Theorem \ref{theo-mainLandau} for the details. That is, the vanishing rate of equilibria at the maximal velocity dictates the Landau damping rate of the oscillations at the critical wave number. This leads to a transfer of energy from the potential energy (i.e. the $L^2$ norm of $E$) to the
kinetic energy of the system, {recalling the total energy of the linearized Vlasov-Poisson system \eqref{linVP}
\begin{equation}\label{energyH}
\mathcal{E}[f] = \frac12\iint_{\RR^3\times \RR^3} \frac{|f|^2}{|\mu'(e)|}\; dxdv + \frac12 \int_{\RR^3} |E|^2\; dx\end{equation}
is conserved in time. Observe that in view of mode solutions of the form $E=-ik e^{\lambda_\pm(k)t + ik\cdot x}$, only the real part of dispersion relation $\lambda_\pm(k)$ contributes to this exchange of energy.} This transfer of energy at the resonant velocity defines the classical notion of Landau damping. In the other words, Landau damping occurs due to the resonant interaction between particles and the oscillatory waves. 

\item {\em Penrose's stable regime:} for $|k| > \kappa_0$, the strong Penrose stability condition \eqref{sPenrose} holds, and therefore the behavior of the electric field is governed by the free transport dynamics as discussed in the previous section.

\end{itemize}


Note that the Landau damping mechanism is much weaker than the dispersion of oscillations at the critical wave number $|k|\sim \kappa_0$. 
The faster the profile $\mu(e)$ vanishes at the maximal velocity $\Upsilon$, the weaker Landau damping is. In particular, it is super exponentially small for Gaussians.
The main mechanism is therefore the dispersion of the electric field,  which is seen on the {\it imaginary} part
of Landau's dispersion relation \eqref{Langmuir0}, 
whereas the Landau damping rate is seen on its {\it real} part of  \eqref{Landau-law}. {The main results of this work are to capture the survival threshold that characterizes the dynamics of the linearized Vlasov-Poisson system \eqref{linVP} into three regimes as described above, and furthermore, to provide quantitative decay estimates on the Green function and solution operators. We shall review related works on Landau damping in Section \ref{sec-related}. }

\subsection{Equilibria}\label{sec-mu} 

To state the main results of this paper, let us precise the assumptions on equilibria $\mu(\frac12|v|^2)$ that we consider. Set $\Upsilon>0$ to be the maximal particle velocity as in \eqref{def-Upsilon}, and let $K_0,N_0$ be fixed constants with $K_0, N_0\ge 4$. Throughout the paper, we assume that

\begin{itemize}

\item $\mu(\frac12|v|^2) >0$ for all $|v|< \Upsilon$. 

\item $\mu(\frac12|v|^2)$ are in $C^{K_0}$. 

\item In the case when $\Upsilon = \infty$, 
\begin{equation}\label{decay-mu}
|\partial_e^{\alpha}\mu(e)| 
\lesssim \langle e\rangle^{-N_0}, \qquad \forall e\ge 0,\quad \forall |\alpha|\le K_0. 
 \end{equation}

\item In the case when $\Upsilon < \infty$, the limit 
\begin{equation}\label{vanish-mu}
\lim_{|v|\to \Upsilon} \frac{\partial_e^\alpha\mu(\frac12|v|^2)}{(\Upsilon-|v|)^{N_0}}  
 \end{equation}
exists and is positive for $|\alpha|\le K_0$. 
\end{itemize}

{We note in particular that the background density $\int \mu(\frac12 |v|^2)\; dv$ is finite and is assumed to be equal to that of background ions in a nonlinear setting, leading to the linearized problem as stated in \eqref{linVP}.}
Note also that both the regularity and decaying rate assumptions on $\mu(e)$ are not optimal, and we made no attempts in optimizing them in this paper, despite the fact that they play a crucial role in deriving the decay estimates and in calculating the Landau damping rate. Apparently, there are many equilibria that satisfy the above assumptions, including Gaussians, any real analytic equilibria, or any radial functions in $v$ that are positive in the interior of its support $\{|v|<\Upsilon\}$. The positivity is used to exclude possible embedded eigenvalues in the essential spectrum of the free transport operator subject to the support of $\mu(v)$, see Section \ref{sec-emlambda}. {We are however not aware of any examples where embedded eigenvalues exist, when support of $\mu(v)$ is not connected.} Furthermore, we stress that no monotonicity was made on the equilibria (e.g., see Remark \ref{rem-monotone}). The vanishing rate of $\mu(\frac12|v|^2)$ as $|v|\to \Upsilon$ dictates the Landau damping rate at the critical wave number as discussed above, see also Theorem \ref{theo-mainLandau}.

\subsection{Main results}

We are now ready to state our first main result of this paper, {focusing on the whole space case.} 

\begin{theorem}\label{theo-mainLandau} Fix an $N_0\ge 4$, and let $\mu(\frac12|v|^2)$ be a non-negative equilibrium as described in Section \ref{sec-mu}, $\Upsilon$ be the maximal speed of particle velocities defined as in \eqref{def-Upsilon}, and set 
 \begin{equation}\label{def-taukappa0}
\begin{aligned}\tau_j^2 &= 2\pi \int_{\{|u|<\Upsilon\}} u^{2j+2}\mu(\frac12 u^2)\;du, 
\\ \kappa_j^2 &=2\pi \int_{\{|u|<\Upsilon\}}  \frac{u\mu(\frac{1}{2}u^2)}{(\Upsilon-u)^{j+1}} \;du ,
\end{aligned}\end{equation}
for $j\ge 0$. Then, the spacetime symbol $D(\lambda,k)$ defined as in \eqref{def-Dintro} of the linearized Vlasov-Poisson system \eqref{linVP} {in the whole space $\RR_x^3\times \RR_v^3$} satisfies the following:

\begin{itemize} 

\item For each $k\in \RR^3$, $D(\lambda,k)$ is analytic and nonzero in $\Re \lambda>0$.

\item For $0\le |k|\le \kappa_0$, $D(\lambda,k)$ has exactly two pure imaginary solutions $\lambda_\pm(k) = \pm i \tau_*(k)$, where $\tau_*(k)$ is {$C^{N_0-2}$ regular}, strictly increasing in $|k|$, with $\tau_*(0) = \tau_0$ and $\tau_*(\kappa_0) = \sqrt{\tau_0^2 + \kappa_1^2}$. In particular, 
$$ \tau_0\le \tau_*(k) \le \sqrt{\tau_0^2 + \kappa_1^2},$$ 
and there are positive constants $c_0, C_0$ so that 
\begin{equation}\label{intr0-KGtaustar} 
 c_0 |k|\le \tau'_*(k) \le C_0|k|, \qquad  c_0 \le \tau''_*(k) \le C_0,\qquad \forall~ 0\le |k|\le \kappa_0.
\end{equation}
In addition, the phase velocity $\nu_*(k) = \tau_*(k)/|k|$ is strictly decreasing in $|k|$, with $\nu_*(0) = \infty$ and $\nu_*(\kappa_0) = \Upsilon$.

\item There is a $\delta_0>0$ so that the two {unique} solutions $\lambda_\pm(k)$ of $ D(\lambda,k) = 0$ can be extended $C^{N_0-2}$ smoothly for $\kappa_0\le |k|\le  \kappa_0+\delta_0$, with $\lambda_\pm(\kappa_0) = \pm i \tau_*(\kappa_0)$, and satisfy the following Landau's law of damping:

\begin{itemize}

\item[(i)] If $\Upsilon = \infty$, then for $|k|\le \delta_0$, we have 
 \begin{equation}\label{Landau-intro}
\Re \lambda_\pm(k) = -  \frac{\pi ^2}{\tau_0}  [u^3\mu(\frac12 u^2)]_{u = \nu_*(k)} 
 (1 + \cO(|k|)),
\end{equation}
where the phase velocity $\nu_*(k) = \frac{\tau_0}{|k|}$.

\item[(ii)] If $\Upsilon <\infty$, then for $\kappa_0\le  |k| \le \kappa_0+\delta_0$, we have  

\begin{equation}\label{Landau-cpt-intro}
\Re \lambda_\pm(k) =-\frac{2\pi^2}{\kappa_1^2} [u\mu(\frac{1}{2}u^2)]_{u = \nu_*(k)} (1 + \cO(|k| - \kappa_0)),
\end{equation}
where  the phase velocity $\nu_*(k) = \Upsilon - \frac{2\kappa_0}{\kappa_1^2} (|k| - \kappa_0) .$ 


\end{itemize}

\item For any $\delta>0$, there is a $c_\delta>0$ so that the strong Penrose stability condition holds
\begin{equation}\label{sPenrose-kappa0}
\inf_{|k|\ge \kappa_0+\delta} ~\inf_{ \Re \lambda \ge 0} |D(\lambda,k)|  \ge c_\delta >0.
\end{equation}

\end{itemize}

\end{theorem}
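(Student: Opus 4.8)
\emph{Reduction to one dimension.} Since $\mu$ is radial, $D(\lambda,k)$ depends on $k$ only through $|k|$; writing $k=|k|e_1$, using $v_1\mu'(\tfrac12|v|^2)=\partial_{v_1}\mu(\tfrac12|v|^2)$, integrating by parts in $v_1$ and integrating out the transverse velocities, \eqref{def-Dintro} becomes
\[
D(\lambda,k)=1+\int_\RR\frac{G(u)}{(\lambda+i|k|u)^2}\,du
=1-\frac{i}{|k|}\int_\RR\frac{G'(u)}{\lambda+i|k|u}\,du,\qquad
G(u):=2\pi\int_{u^2/2}^{\Upsilon^2/2}\mu(e)\,de .
\]
The planar marginal $G$ is even, nonnegative, $C^{N_0-1}$, vanishes at $u=\pm\Upsilon$ to order $N_1+1$ (or decays fast at infinity), and, since $G'(u)=-2\pi u\mu(\tfrac12 u^2)$, is \emph{single-humped}: strictly increasing on $(-\Upsilon,0)$ and strictly decreasing on $(0,\Upsilon)$, regardless of the monotonicity of $\mu$ in $e$. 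One records $\int_\RR G=\tau_0^2$ and $\int_\RR G(u)(\Upsilon+u)^{-2}\,du=\kappa_0^2$, the latter matching \eqref{def-introkappa0} after a change of variables. On $\lambda=i|k|\nu$ this reads $D(i|k|\nu,k)=1-|k|^{-2}H(\nu)$ with $H(\nu):=\int_\RR G(u)(\nu+u)^{-2}\,du$.

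\emph{Stability and the oscillatory modes.} Analyticity of $D(\cdot,k)$ on $\{\Re\lambda>0\}$ and $D\to1$ as $|\lambda|\to\infty$ there are clear (the latter using compact support or fast decay of $G$); nonvanishing on $\{\Re\lambda>0\}$ is precisely Penrose's criterion for the single-humped marginal $G$ \cite{Penrose,MV}. For $\nu>\Upsilon$, $H$ is smooth with $H'<0$, $H(\Upsilon^+)=\kappa_0^2$, $H(\infty)=0$, so $\nu\mapsto H(\nu)$ is a decreasing bijection of $(\Upsilon,\infty)$ onto $(0,\kappa_0^2)$; inverting $|k|^2=H(\nu)$ produces a smooth strictly decreasing $\nu_*(|k|)$ with $\nu_*(0^+)=\infty$, $\nu_*(\kappa_0)=\Upsilon$, whence the two imaginary zeros $\lambda_\pm(k)=\pm i\tau_*(k)$, $\tau_*(k)=|k|\nu_*(|k|)$, with $\tau_*(0)=\tau_0$ (from $H(\nu)\sim\tau_0^2\nu^{-2}$, which also recovers \eqref{Langmuir0}) and $\tau_*(\kappa_0)=\kappa_0\Upsilon$. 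These are the only imaginary zeros: for $|\tau|\ge|k|\Upsilon$, $\tau\mapsto D(i\tau,k)$ is strictly increasing with values in $[1-\kappa_0^2/|k|^2,1)$; for $0<|\tau|<|k|\Upsilon$ the analytically continued imaginary part $\Im D(i\tau,k)=-2\pi^2\tau|k|^{-3}\mu(\tfrac12(\tau/|k|)^2)$ is nonzero by positivity of $\mu$ on $\{|v|<\Upsilon\}$ (no embedded eigenvalues); and $\Re D(i0,k)=1+2\pi|k|^{-2}\int_\RR\mu(\tfrac12 u^2)\,du\ge1$. Differentiating $H(\nu_*(|k|))=|k|^2$ gives $\tau_*'(|k|)=\nu_*+2H/H'$ and $\tau_*''(|k|)=2\sqrt{H}\,\bigl(3(H')^2-2HH''\bigr)/(H')^3$; since $H''=6\int_\RR G(\nu+u)^{-4}\,du>0$ and strict Cauchy--Schwarz gives $\bigl(\int_\RR G(\nu+u)^{-3}\bigr)^2<\int_\RR G(\nu+u)^{-2}\cdot\int_\RR G(\nu+u)^{-4}$ (the support being a nondegenerate interval), one gets $3(H')^2<2HH''$, hence $\tau_*''>0$; continuity and strict positivity of $\tau_*'/|k|$ and $\tau_*''$ on the compact interval $[0,\kappa_0]$ then yield \eqref{intr0-KGtaustar}, and the strict decrease of $\nu_*$ is built in.

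\emph{Landau damping past the threshold.} For $|k|$ near $\kappa_0$ one continues $H$ analytically across $\nu=\Upsilon$ through the form $H(z)=-2\pi\int_\RR u\mu(\tfrac12 u^2)(z+u)^{-1}\,du$: since $u\mu(\tfrac12 u^2)$ is $C^{N_0-1}$, compactly supported and vanishes to order $N_1$ at $\pm\Upsilon$, the continuation is $C^{N_1-1}$ near $z=\Upsilon$, with real part a principal-value integral, $\Im H(\nu)$ a fixed multiple of $\nu\mu(\tfrac12\nu^2)$ (sign dictated by $\Re\lambda>0$), and $-H'(\Upsilon)=\kappa_1^2$. Because $\partial_\lambda D(i\kappa_0\Upsilon,\kappa_0)=-2i\kappa_0^{-3}\int_\RR G(u)(\Upsilon+u)^{-3}\,du\ne0$, the implicit function theorem extends $\lambda_\pm(k)$ as a $C^{N_1-1}$ root of $D(\lambda,k)=0$ (equivalently $|k|^2=H(z)$) for $\kappa_0\le|k|\le\kappa_0+\delta_0$ with $\lambda_\pm(\kappa_0)=\pm i\kappa_0\Upsilon$; splitting into real and imaginary parts, the real part fixes $\nu_*(k)=\Upsilon-\tfrac{2\kappa_0}{\kappa_1^2}(|k|-\kappa_0)+\cdots$, and the imaginary part then gives $\Re\lambda_\pm(k)\propto-\mu(\tfrac12\nu_*^2)<0$ for $|k|>\kappa_0$, in the form \eqref{Landau-cpt-intro}. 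The case $\Upsilon=\infty$, $\kappa_0=0$, is identical with the expansion performed at $z=\infty$, $\nu_*(k)=\tau_0/|k|\to\infty$, yielding \eqref{Landau-intro}; in both cases the rate inherits the vanishing/decay order $N_1$ of $\mu$, which is the precise sense in which faster decay gives weaker damping.

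\emph{Strong Penrose stability for $|k|>\kappa_0$, and the main obstacles.} For $|k|\ge M(\delta)$ large, $D\to1$ uniformly gives $|D|\ge\tfrac12$; on $\kappa_0+\delta\le|k|\le M$ one argues by compactness, excluding a putative sequence $|D(\lambda_n,k_n)|\to0$: such $\lambda_n$ stay bounded and converge (subsequentially) either to $\lambda_\infty$ with $\Re\lambda_\infty>0$, contradicting the stability step, or to $i\tau_\infty$ on the axis, where one rules out each case --- $|\tau_\infty|\ge|k|_\infty\Upsilon$ gives $\Re D\ge1-\kappa_0^2/(\kappa_0+\delta)^2>0$; $0<|\tau_\infty|<|k|_\infty\Upsilon$ gives $\Im D\ne0$ by positivity of $\mu$ on the interior of the support; $\tau_\infty=0$ gives $\Re D=1+2\pi|k|_\infty^{-2}\int_\RR\mu(\tfrac12 u^2)\,du\ge1>0$ --- so no such sequence exists, and tracking the constants through this dichotomy yields the quantitative bound \eqref{sPenrose-kappa0}. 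The genuinely delicate points are: (a) justifying the analytic continuation of $D$ through the essential spectrum with sufficient regularity, together with the nondegeneracy $\partial_\lambda D\ne0$ at the threshold needed for the implicit function theorem --- this is exactly where $C^{N_0}$ smoothness, the limit \eqref{vanish-mu}, and $N_1\ge4$ enter, through the vanishing orders of $G$ and $G'$ at $\pm\Upsilon$; and (b) extracting the precise constants in \eqref{Landau-intro}--\eqref{Landau-cpt-intro} and making the lower bound $c_\delta$ explicit and uniform near the degeneracies of $\Im D$. The remaining structural claims (stability, monotonicity, convexity) are comparatively soft once the reduction to the single-humped marginal $G$ and the phase-velocity reformulation $|k|^2=H(\nu)$ are in hand.
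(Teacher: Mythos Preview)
Your proposal is correct and follows essentially the same route as the paper: the reduction to the one-dimensional marginal $G$ (the paper's $\kappa(u)$) and the phase-velocity function $H(\nu)$ (the paper's $\mathcal{H}(z)$), Penrose/positivity for stability and the exclusion of embedded eigenvalues, monotonicity of $H$ on $(\Upsilon,\infty)$ for the oscillatory branch, Plemelj-type continuation plus the implicit function theorem at the threshold for the Landau rate, and a compactness/dichotomy argument for \eqref{sPenrose-kappa0}. The only notable organizational difference is your convexity proof: you work directly with $H(\nu)$ and the Cauchy--Schwarz inequality $\bigl(\int G(\nu+u)^{-3}\bigr)^2<\int G(\nu+u)^{-2}\cdot\int G(\nu+u)^{-4}$ to get $3(H')^2<2HH''$ and hence $\tau_*''>0$, whereas the paper passes to the auxiliary variables $\omega(y)=z^2\mathcal{H}(z)$, $y=z^{-2}$, and $x_*=\tau_*^2$, and uses the equivalent H\"older bound $\omega'(y)^2\le\tfrac12\omega(y)\omega''(y)$; the two computations are reparametrizations of the same inequality, and yours is arguably the more transparent packaging.
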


%

Theorem \ref{theo-mainLandau} confirms the physical discussions given in the previous sections, see Section \ref{sec-threshold}. In particular, Langmuir's plasma oscillations survive Landau damping for all the wave numbers less than the survival threshold $\kappa_0$, while Landau's law of damping is present and explicitly computed at $\kappa_0$, see \eqref{Landau-intro}-\eqref{Landau-cpt-intro}. Beyond $\kappa_0$, the strong Penrose stability condition is ensured, and the free transport dynamics is a good approximation for the large time behavior of solutions to the linearized Vlasov-Poisson problem.  In particular, we note that oscillations obey a Klein-Gordon's dispersion relation: namely $\tau_*(k) \sim \sqrt{1+|k|^2}$, see \eqref{intr0-KGtaustar}. In particular, oscillations follow the dispersion of a Schr\"odinger's type at the very low frequency  as stated in \eqref{Langmuir0}. Finally, we stress that the smooth extension of $D(\lambda,k)$ at the survival threshold may not be analytic in $\lambda$, since $\mu(v)$ may not be analytic in $v$. 

Our next main result provides quantitative decay estimates on the electric field of the linearized Vlasov-Poisson system \eqref{linVP}. Precisely, we prove the following. 

\begin{theorem}\label{theo-main} Let $\mu(\frac12|v|^2)$ be a non-negative equilibrium as described in Section \ref{sec-mu}, and let $E$ be the electric field of the linearized Vlasov-Poisson system \eqref{linVP} { in the whole space $\RR^3_x\times \RR^3_v$}. Suppose that the initial data $f_0$ has zero average $\iint f_0(x,v)\; dxdv =0$, and satisfies 
\begin{equation}\label{data-assumptions}
\sup_{x,v} \langle x \rangle^5 \langle v\rangle^5 |\partial_{x}^\alpha \partial_v^\beta f_0(x,v)| \lesssim 1
\end{equation}
for $|\alpha|+|\beta|\le 2$. Then, for all $t\ge 0$, we can write 
\begin{equation}\label{rep-Eosc}
\begin{aligned}
E &= \sum_\pm E^{osc}_\pm(t,x) + E^r(t,x),
\end{aligned}
\end{equation}
where 
\begin{equation}\label{osc-decay}
\begin{aligned}
\| E^{osc}_\pm(t)\|_{L^p_x} & \lesssim \langle t\rangle^{-3(1/2-1/p)} , \qquad p\in [2,\infty),
\\
\| E^r(t)\|_{L^p_x} &\lesssim  \langle t\rangle^{-3+3/p} , \qquad p\in [1,\infty]. 
\end{aligned}\end{equation}
In addition, $E^r(t)$ behaves like the electric field generated by the free transport in the sense that $t\partial_x$-derivatives of $E^r(t)$ satisfy the same estimates as those for $E^r(t)$. 
\end{theorem}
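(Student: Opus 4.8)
The plan is to analyze \eqref{linElambda} by inverting the Laplace transform and deforming the Bromwich contour, guided by the fine structure of $D(\lambda,k)$ supplied by Theorem \ref{theo-mainLandau}. For $\gamma>0$ and each $k$ one has
\begin{equation*}
\FE_k(t) = \frac{1}{2\pi i}\int_{\gamma-i\infty}^{\gamma+i\infty} e^{\lambda t}\,\frac{\TE^0_k(\lambda)}{D(\lambda,k)}\; d\lambda .
\end{equation*}
By Theorem \ref{theo-mainLandau} the symbol $D(\cdot,k)$ is analytic and zero-free on $\Re\lambda>0$, extends analytically across $i\RR$ off the continuous-spectrum segment $\{\lambda=is:\ |s|\le\Upsilon|k|\}$, and in a left neighborhood of $i\RR$ its only zeros are the pair $\lambda_\pm(k)$, present for $|k|\le\kappa_0+\delta_0$ and pure imaginary for $|k|\le\kappa_0$. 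I would push the contour onto, and a fixed amount past, the imaginary axis, collecting the residues at $\lambda_\pm(k)$. This produces the decomposition \eqref{rep-Eosc} in which
\begin{equation*}
\widehat{E^{osc}_\pm}(t,k) = \chi(|k|)\,\frac{\TE^0_k(\lambda_\pm(k))}{\partial_\lambda D(\lambda_\pm(k),k)}\; e^{\lambda_\pm(k)t},
\end{equation*}
with $\chi$ a smooth cutoff to the band $\{|k|\le\kappa_0+\delta_0\}$ on which $\lambda_\pm(k)$ lives on or infinitesimally near $i\RR$, and $\FE^r_k(t)=\FE_k(t)-\sum_\pm\widehat{E^{osc}_\pm}(t,k)$ equal to the remaining contour integral along $\Re\lambda=0$ (and to the full $\FE_k$ at frequencies outside the cutoff, where no pole is present).

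For the oscillatory term I would write $E^{osc}_\pm(t,x)=\int_{\RR^3}e^{ik\cdot x+\lambda_\pm(k)t}\,a_\pm(k)\,dk$, where on $\mathrm{supp}\,\chi$ one has $\lambda_\pm(k)=\Re\lambda_\pm(k)\pm i\tau_*(k)$ with $\Re\lambda_\pm(k)\le 0$ (zero for $|k|\le\kappa_0$, exponentially small Landau correction just beyond) and $\tau_*$ radial, smooth in $k$, with nondegenerate Hessian by the Klein--Gordon bounds \eqref{intr0-KGtaustar}. A Littlewood--Paley decomposition and stationary phase for the resulting oscillatory integral then give $\|E^{osc}_\pm(t)\|_{L^\infty_x}\lesssim\langle t\rangle^{-3/2}$ up to an endpoint loss coming from the mild conic singularity of the amplitude $a_\pm$ at $k=0$ (this is why $p=\infty$ must be excluded), while $\|E^{osc}_\pm(t)\|_{L^2_x}=\|a_\pm\|_{L^2_k}\lesssim 1$ by Plancherel; interpolation yields $\langle t\rangle^{-3(1/2-1/p)}$ for $p\in[2,\infty)$. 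Here one must check that $a_\pm$ is bounded with enough bounded derivatives away from $k=0$: this reduces to the quantitative non-degeneracy of $\partial_\lambda D(\lambda_\pm(k),k)$ on $\mathrm{supp}\,\chi$ and, near $k=0$, to using the zero-average hypothesis $\iint f_0=0$ to tame the singular Poisson factor $|k|^{-2}$ carried by $\TE^0_k$.

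For the remainder, on $\{|k|\ge\kappa_0+\delta\}$ (which contains all $|k|\gg1$) the strong Penrose bound \eqref{sPenrose-kappa0} holds, so $1/D(\cdot,k)$ is analytic and uniformly bounded on $\Re\lambda\ge0$ and the convolution representation \eqref{linEkt} applies, $\FE_k(t)=\FE^0_k(t)+\FG_k\star_t\FE^0_k(t)$ with $|\FG_k(t)|\lesssim e^{-\langle kt\rangle}$; since the frequency is bounded below there, the free-streaming field decays rapidly. For the complementary frequencies, $\FE^r_k(t)$ is the imaginary-axis integral with the poles removed, and writing $1/D=1+(1-D)/D$ isolates the free-streaming contribution while the remaining kernel is regular off the continuous spectrum once the poles are subtracted, so $\FE^r_k$ is a convolution of $\FE^0_k$ against a time-localized kernel. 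The upshot I expect is that $E^r$ is controlled by the free-streaming density $\rho^0(t,x)=\int f_0(x-vt,v)\,dv$, for which the substitution $w=vt$ gives $\|\rho^0(t)\|_{L^1_x}\lesssim1$ and $\|\rho^0(t)\|_{L^\infty_x}\lesssim\langle t\rangle^{-3}$, hence $\|\rho^0(t)\|_{L^p_x}\lesssim\langle t\rangle^{-3+3/p}$ by interpolation, with the zero-average hypothesis keeping this rate intact after the action of $\nabla\Delta^{-1}$ at low frequency; this yields the claimed bound on $E^r$ for $p\in[1,\infty]$. The $t\partial_x$-assertion is inherited from the transport structure: for the free density $t\partial_{x_j}\rho^0(t,x)=\int(\partial_{v_j}f_0)(x-vt,v)\,dv$ is the free density of the modified data $\partial_{v_j}f_0$, which satisfies the same hypotheses, and the same manipulation propagates through the convolution with $\FG_k$ and through the pole-subtracted kernel.

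The step I expect to be the crux is the behavior near the threshold $|k|\approx\kappa_0$, where the contour deformation, the clean split of \eqref{rep-Eosc}, and the control of both $E^{osc}_\pm$ and $E^r$ all hinge on uniform lower bounds for $|D(\lambda,k)|$ as $\lambda$ approaches $i\RR$ near the edge $\pm i\Upsilon|k|$ of the continuous spectrum — precisely where the Landau pole $\lambda_\pm(k)$ is about to enter it — together with uniform control of $\partial_\lambda D(\lambda_\pm(k),k)$ so that the residue amplitude does not degenerate as $\nu_*(k)\to\Upsilon$. A second delicate point is the low-frequency regime $|k|\to0$, where the interplay between the $|k|^{-2}$ in $\TE^0_k$ and the $|k|^{-2}$ growth of $1-D$ near the continuous spectrum, and the zero-average condition, must be exploited to keep all frequency integrals convergent and to obtain the stated rates. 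Once these uniform estimates are in hand, the dispersive bounds and the free-streaming comparison are comparatively routine.
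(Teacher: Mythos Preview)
Your high-level strategy is sound and you correctly flag the threshold $|k|\approx\kappa_0$ and the low-frequency regime as the delicate spots. There is, however, a genuine gap in the contour step, and your split of $E$ differs from the paper's in a way that matters for the remainder bound.

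\textbf{No analytic continuation past $i\RR$.} You propose to push the contour ``a fixed amount past'' the imaginary axis. Off the essential-spectrum segment $\{\lambda=i\tau:|\tau|\le\Upsilon|k|\}$ this is fine, since $\cH(z)$ in \eqref{def-Dlambda1} is analytic on $\CC\setminus[-\Upsilon,\Upsilon]$. But on the segment itself the equilibria are only $C^{N_0}$, so $D(\lambda,k)$ has no analytic continuation into $\Re\lambda<0$; this is exactly the difficulty the paper singles out at the opening of Section~\ref{sec-Green}. The paper therefore keeps the contour \emph{on} $i\RR$ (as a limit from the right) with semicircular indentations $\cC_\pm\subset\{\Re\lambda\ge0\}$ around the poles, and obtains time decay not from contour shifting but from repeated integration by parts in $\lambda$, namely $\FG_k(t)=\frac{(-1)^n}{(|k|t)^n}\int_\Gamma e^{\lambda t}|k|^n\partial_\lambda^n\TG_k(\lambda)\,d\lambda$, together with the verification that $|k|^n\partial_\lambda^n\TG_k$ enjoys the same pointwise bounds as $\TG_k$; this is what turns $C^{N_0}$ regularity of $\mu$ into $\langle kt\rangle^{-N_0}$ decay of the remainder Green kernel. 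Your phrase ``time-localized kernel'' hides precisely this mechanism. Moreover, since for $|k|\le\kappa_0$ the poles sit exactly on $i\RR$, extracting their contribution while the contour stays on the axis is delicate: the paper does it by writing $\TG_k=1+\TG_{k,0}+\TG_{k,1}$ as in \eqref{TG-decompose}, where $\TG_{k,0}$ is an explicit rational function (from the expansion \eqref{expansion-DL}) whose poles $\lambda_{\pm,0}$ lie $\cO(|k|^4)$ away from the true $\lambda_\pm$ and whose residues are computed by hand, while $\TG_{k,1}$ carries the difference and is shown to contribute $\cO(|k|^3\langle kt\rangle^{-N_0})$ on each piece of the contour. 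This ``subtract an explicit rational approximant'' device is the technical heart of Proposition~\ref{prop-GreenG} and is absent from your sketch.

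\textbf{The oscillatory/remainder split.} You take $E^{osc}_\pm$ to be the full residue of $e^{\lambda t}\TE^0_k(\lambda)/D(\lambda,k)$, a pure oscillation in $t$. The paper instead decomposes the Green function $\FG_k$ first and then convolves in time with the free density $S$, so that initially $E^{osc}_\pm=G^{osc}_\pm\star_{t,x}S$. Since $\nabla_x\Delta_x^{-1}S$ decays only like $t^{-2}$, this convolution does not directly give $t^{-3/2}$; Proposition~\ref{prop-decompE} integrates by parts twice in $t$ and uses the exact cancellation $\sum_\pm a_\pm(k)/\lambda_\pm(k)=1+\cO(|k|^2)$ (see \eqref{cal-GoscE}) to absorb the leading free term into the oscillatory boundary contribution, leaving $G^{osc}_\pm\star_{t,x}\partial_t^2S$ plus multipliers $\cP_2,\cP_4$ of order $|k|^2$. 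Your full-residue route avoids this integration by parts, but then your $E^r$ inherits the tail $a_\pm(k)\,|k|^{-2}ik\int_t^\infty e^{\lambda_\pm(t-s)}\FS_k(s)\,ds$; with $\Re\lambda_\pm=0$ on $|k|\le\kappa_0$ and the low-frequency factor $|k|^{-2}$, this piece does not obviously decay at the free-transport rate $\langle t\rangle^{-3+3/p}$, and in particular the $L^1$ endpoint is not covered by the zero-average argument alone (Lemma~\ref{lem-potential}(iv) gives only $p>1$). The paper's arrangement, with the explicit $|k|^2$ gain in $\cP_2,\cP_4$, cancels the $\Delta_x^{-1}$ singularity by construction and yields all $p\in[1,\infty]$ for $E^r$.
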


Theorem \ref{theo-main} asserts that the leading dynamics of the electric field is indeed oscillatory and dispersive, which behaves like a Klein-Gordon wave of the form $e^{\pm i t\sqrt{1 - \Delta_x} }$, while the remainder decays faster, whose derivatives gain extra decay, extending the previous works \cite{BMM-lin,HKNF3} for analytic equilibria. {We stress that derivatives of the Klein-Gordon components $E^{osc}_\pm(t,x)$ do not gain any extra decay, which is one of the main obstruction for the nonlinear problem.} In fact, in view of Theorem \ref{theo-mainLandau}, we are able to describe precisely the oscillatory electric field $E^{osc}_\pm(t,x)$, namely 
\begin{equation}\label{true-Eosc}
\begin{aligned}
E^{osc}_\pm(t,x) &= G_\pm^{osc}(t)  \star_{t,x} S(t,x) 
\end{aligned}
\end{equation}
where $G_\pm^{osc}(t,x)$ is the oscillatory Green function whose Fourier transform is equal to $e^{\lambda_\pm(k)t} a_\pm(k)$, for some compactly supported and smooth Fourier symbols $a_\pm(k)$ and for dispersion functions $\lambda_\pm(k)$ constructed as in Theorem \ref{theo-mainLandau}. The source $S(t,x)$ is the density generated by the free transport dynamics, namely $S(t,x) =\int f_0(x - v t, v) \;dv.$ See Proposition \ref{prop-decompE} for the precise description of the oscillatory component. The stated dispersive estimates follow from those for the Klein-Gordon's type dispersion $e^{\lambda_\pm(k)t} a_\pm(k)$. We stress that unlike \cite{BMM-lin,HKNF3}, the equilibria $\mu(v)$ may not be analytic and therefore, the resolvent kernel of the linearized Vlasov-Poisson system does not have an analytic extension to the stable half plane in $\Re \lambda <0$. As a consequence, isolating the poles to compute the residue of the resolvent kernels (i.e. the oscillatory component) and deriving decay estimates for the remainder turn out to be rather delicate, see Section \ref{sec-Green} for the details. 


The fact that the electric field is oscillatory at temporal frequencies $\pm \tau_0$ can also be seen through the following ``plasma oscillation'' equation  
\begin{equation} \label{oscillations1}
\partial_t^2 E + \tau^2_0 E = \nabla_x \cdot \int  v\otimes vf(t,x,v)  dv
\end{equation}
which can be easily derived from the conservation of mass and momentum of the system \eqref{linVP}, {noting with $\tau_0^2 = \int_{\RR^3}\mu(\frac12 |v|^2)\; dv$, see Remark \ref{rem-mass}.}
Namely, up to these oscillations, the electric field is approximately equal to $\nabla_x \cdot \int f v\otimes v dv$, that is a {\it derivative} of the kinetic energy. Surprisingly
this term is {\it local} in space, in strong contrast with $\nabla_x \Delta_x^{-1}$ which is completely {\it global} in space.
It should therefore decays as fast as (if not faster than) the kinetic energy $\int |v|^2 fdv$ or the density $\int f dv$. This reflects in the stated bounds \eqref{osc-decay} on $E^r$: namely, it is of order $t^{-3}$ (in fact, one may obtain a decay of order $t^{-4}$ upon a further integration by parts in time, see Section \ref{sec-bdE}), compared with the decay of order $t^{-2}$ for the electric field near vacuum case \cite{Bardos}. 

\subsection{Main results on the torus case}\label{sec-torus}

{

In this section, we provide a few remarks on the torus case: namely, the linearized Vlasov-Poisson system posed on the torus $\TT^3_L \times \RR^3_v$, where $\TT^3_L = [0,2\pi L]^3$ (namely functions with period $L$ for $L>0$). Then, we obtain the following theorem which is a discrete version of Theorem \ref{theo-mainLandau}. 

\begin{theorem}\label{theo-mainLandauT} Fix a torus $\TT^3_L = [0,2\pi L]^3$ with $L>0$. Let $\mu(\frac12|v|^2)$ be a non-negative equilibrium as described in Section \ref{sec-mu}, $\Upsilon$ be the maximal speed of particle velocities defined as in \eqref{def-Upsilon}, and introduce $\kappa_0$ as in \eqref{def-introkappa0}. Then, the spacetime symbol $D(\lambda,k)$ defined as in \eqref{def-Dintro} of the linearized Vlasov-Poisson system \eqref{linVP} {on the torus $\TT_L^3\times \RR_v^3$} satisfies the following:

\begin{itemize} 

\item For each $k\in L^{-3}\ZZ^3$, $D(\lambda,k)$ is analytic and nonzero in $\Re \lambda>0$.

\item For each $k\in L^{-3}\ZZ^3 \setminus\{0\}$ with $|k|\le \kappa_0$, $D(\lambda,k)$ has exactly two pure imaginary solutions $\lambda_\pm(k) = \pm i \tau_*(k)$, for some non-vanishing $\tau_*(k)$.

\item For any $\delta>0$, there is a $c_\delta>0$ so that the strong Penrose stability condition holds
\begin{equation}\label{sPenrose-kappaT0}
\inf_{|k|\ge \kappa_0+\delta} ~\inf_{ \Re \lambda \ge 0} |D(\lambda,k)|  \ge c_\delta >0.
\end{equation}

\end{itemize}

\end{theorem}

Theorem \ref{theo-mainLandauT} is simply a discrete version of Theorem \ref{theo-mainLandau}, namely $k\in L^{-3}\ZZ^3$. Interestingly, there are time-periodic solutions $e^{\pm it\tau_*(k) + ik\cdot x}$ to the linearized Vlasov-Poisson system \eqref{linVP} near compactly supported equilibria $\mu(\frac12|v|^2)$ on the torus $\TT^3_L \times \RR^3_v$, provided that $L$ is large enough so that $ L^{-3}\ZZ^3 \cap \{|k|\le \kappa_0\} \not = \emptyset$, noting $\kappa_0>0$ since $\mu$ is compactly supported. As a consequence, the linearized electric field $E(t)$ does not decay, but time periodic, for such a mode. Precisely, we obtain the following theorem which is a discrete version of Theorem \ref{theo-main}.

\begin{theorem}\label{theo-mainT} Fix a torus $\TT^3_L = [0,2\pi L]^3$ with $L>0$. Let $\mu(\frac12|v|^2)$ be a non-negative equilibrium as described in Section \ref{sec-mu}, and let $E$ be the electric field of the linearized Vlasov-Poisson system \eqref{linVP} { on the torus $\TT^3_L\times \RR^3_v$}. Suppose that the initial data $f_0$ has zero average $\iint f_0(x,v)\; dxdv =0$, and satisfies 
\begin{equation}\label{data-assumptionsT}
\sup_{x,v} \langle v\rangle^5 |\partial_{x}^\alpha \partial_v^\beta f_0(x,v)| \lesssim 1
\end{equation}
for $|\alpha|+|\beta|\le 2$. Then, for all $t\ge 0$, we can write 
\begin{equation}\label{rep-EoscTorus}
\begin{aligned}
E &= \sum_{k\in \ZZ^3/L^3 , \;|k|\le \kappa_0} e^{\pm it\tau_*(k) + ik\cdot x} ik+ E^r(t,x),
\end{aligned}
\end{equation}
where the dispersion relation $\tau_*(k)$ is constructed as in Theorem \ref{theo-mainLandauT}, while the remainder $E^r(t,x)$ has its Fourier transform $\FE_k^r(t)$ satisfies the following phase mixing estimates
\begin{equation}\label{EPMix}
|\FE_k^r(t)| \le C \langle kt\rangle^{-N} 
\end{equation} 
uniformly in $k\in L^{-3}\ZZ^3 $, for some constant $N$ depending only on the regularity of $\mu(v)$. 
\end{theorem}

In absence of time-periodic modes in \eqref{rep-EoscTorus} (for instance, when $\mu(v)$ is positive and therefore $\kappa_0 =0$), Theorem \ref{theo-mainT} is by now classical and often referred to as linear Landau damping on the torus, see, e.g., \cite{MV, GNR1}. The new contribution of this work is to establish the appearance of time-periodic modes $e^{\pm it\tau_*(k) + ik\cdot x}$ that occur {\em below the survival threshold $\kappa_0$}, where $\tau_*(k)$ solve $D(\pm i\tau_*(k),k) =0$, namely 
$$\frac{1}{|k|^2} \int_{\RR^3} \frac{k \cdot v}{ \tau_*(k) + k \cdot v } \mu'(e)\; dv = 1$$
 for $0<|k|\le \kappa_0$ (and $k$ is discrete in $L^{-3}\ZZ^3$). As will be seen in the proof, $\tau_*(k) / |k| \ge \Upsilon$, where $\Upsilon$ is the maximal particle speed defined as in \eqref{def-Upsilon}, and therefore the above integration in $v$ is well-defined. Physically speaking, time-periodic solutions exist, since there are no particles whose velocity resonate to that of the oscillatory waves whose phase velocity $\omega(k) = {\tau_*(k)}/{|k|}$. It would be interesting to establish time-periodic or quasi-periodic solutions to the corresponding nonlinear problem.   

{For the rest of the paper, we shall focus on the whole space case $\RR^3_x\times \RR^3_v$. As the linearized problem is solved mode by mode with continuous spatial frequencies $k\in \RR^3$, the torus case is treated as a special case where spatial frequencies $k$ are discrete in $\ZZ^3/L^3$. 
}}

\subsection{Related works}\label{sec-related}

{ 

Landau \cite{Landau}, see also \cite{Maslov, Trivelpiece}, established damping or decay of the electric field via mode analysis of the linearized Vlasov-Poisson problem \eqref{linVP}, but do not provide quantitative decay rates of its solutions. As discussed in Section \ref{sec-threshold}, this classical damping mechanism is realized upon computing the {\em real part} of the dispersion relation $\lambda(k)$, see \eqref{Landau-law} and \eqref{Landau-law2}, for each spatial frequency $k\in \RR^3$ (or discrete on $\ZZ^3/L^3$ for the torus case). This Landau damping rate is sensitive to the decay of $\mu(v)$ at the maximal speed: reading off from \eqref{Landau-law} and \eqref{Landau-law2}, the faster $\mu(v)$ decays, the weaker this Landau damping is. 

The first mathematical work that captures this sensitivity of decay of $\mu(v)$ is due to Glassey and Schaeffer \cite{Glassey, Glassey1}, where the authors proved that for the linearized Vlasov-Poisson system near a Maxwellian on the whole line, the electric field cannot in general decay faster than $1/ (\log t)^{13/2}$ in $L^2$ norm, while near polynomially decaying equilibria at rate $\langle v\rangle^{-\alpha}$, $\alpha>1$, it cannot decay faster than $t^{-\frac{1}{2(\alpha-1)}}$. In addition, there is no Landau damping (i.e. no decay of the electric field in $L^2$ norm) near compactly supported equilibria. Theorem \ref{theo-mainLandau} in this work establishes rigorously the damping rate and the sensitivity of decaying equilibria $\mu(v)$, see \eqref{Landau-intro}-\eqref{Landau-cpt-intro}. As a matter of facts, though the formula \eqref{Landau-intro} for analytic equilibria is well-documented in the physical literature \cite{Landau, Trivelpiece}, the rigorous mathematical proof appears missing until now\footnote{after this work was done and released on the arXiv, we learned that another proof of the linear Landau damping for analytic equilibria is also provided independently in \cite{Ionescu}.}. On the other hand, the formula \eqref{Landau-cpt-intro} appears new in the literature, especially the fact that there is a survival threshold of spatial frequencies below which oscillatory modes exist and do not damp. The results are obtained for both the torus and whole space cases.  

Concerning quantitative decay of the linearized electric field, the linear Landau damping and exponential decay on the torus are established and well-understood, see \cite{Degond, MV, GNR1}, for analytic equilibria (and hence, the survival threshold $\kappa_0 =0$). Theorems \ref{theo-mainLandauT} and \ref{theo-mainT} obtained in the previous section generalize the previous works for compactly supported equilibria (hence, the survival threshold $\kappa_0$ is always positive), including in particular oscillatory modes that are not damped. We also mention a related study \cite{Mahir,Mahir1} for the gravitational case. 

Turning to the whole space case, as discussed at length in the previous sections, the presence of small spatial frequencies complicates the spectrum of the linearized problem, including the failure of the Penrose condition \eqref{sPenrose}, the lack of exponential decay (due to no spectral gap), and the existence of purely oscillatory modes, see Figure \ref{fig-LDthreshold}. The linear Landau damping or decay of the electric field now has an additional damping mechanism: namely, the dispersion that comes from the {\em imaginary part} of the dispersion relation $\lambda(k)$, which we shall now review. 

For the screened Vlasov-Poisson system, the Penrose condition \eqref{sPenrose} remains valid, and therefore the {\em real part} of the dispersion relation is bounded away from zero and the exponential decay for each Fourier mode can be obtained as established in \cite{BMM1,HKNF2}. 

In the unscreened case \eqref{linVP}, the Penrose condition \eqref{sPenrose} always fails. The dispersion of the electric field comes from the {\em imaginary part} of the dispersion relation, see \eqref{Langmuir0}, which is of Klein-Gordon's type dispersion and provides decay of order $t^{-3/2}$ in $L^\infty$ norms.  This linear dispersive decay was proven independently in \cite{BMM-lin} for Gaussian equilibria and in 
\cite{HKNF3} for general analytic equilibria, see also \cite{HKNF4}. The main results of the present work, Theorems \ref{theo-mainLandau} and \ref{theo-main} thus extend the previous results for compactly supported equilibria, leading to the existence of Klein-Gordon's oscillatory waves, also known as Langmuir's waves \cite{Trivelpiece}, for a non-trivial range of spatial frequencies. The fact that dispersion remains a Klein-Gordon's type dispersion is highly non-trivial, since the asymptotic expansion \eqref{Langmuir0} for small $k$ is no longer valid for intermediate frequencies $k$ (precisely, for all $k$ up to the survival threshold $\kappa_0>0$). In addition, the Landau damping rate is also provided for both non-compact and compactly supported equilibria. Finally, we stress that the quantitative decay estimates are not only established for solutions to the linearized problem, but also for the corresponding solution operators (i.e. pointwise Green functions and semi-group estimates) which are useful for the nonlinear problem. 
    
}

\subsection{Notation}

{
We use the notation $\widehat{~\cdot~}$ and $\widetilde{~\cdot~}$ to denote the Fourier transform in $\RR^3_x$ and the Laplace-Fourier transform in $\RR_+ \times \RR^3_x$, namely   
\[
\widehat{f}_k = \int_{\mathbb{R}^{3}} e^{-ix \cdot k} f(x) dx, 
\qquad	\widetilde{f}_k(\lambda) = \int_{0}^{\infty} e^{-\lambda t} \widehat{f}_k(t) dt ,
\]
for $\Re \lambda \ge 0$ and $k \in \RR^3$. { Throughout the paper, for sake of convenience, we shall use the spacetime convolution notation
\[
	G \star_{t} f(t) = \int_0^t G(t-s)f(s)\; ds, \qquad 	G \star_{t,x} f(t,x) = \int_0^t\int_{\RR^3} G(t-s,x-y)f(s,y)\; dyds
\]
with time integration taken over $[0,t]$, as we shall only deal with functions that vanish for $t<0$. }
}

\subsection{Acknowledgements} 
This work is part of a collaborative effort to understand the large-time behavior of charged particles and establish their possible final states in a non-equilibrium state, and the author would like to thank his collaborators C. Bardos, D. Han-Kwan, E. Grenier, I. Rodnianski, and F. Rousset for their many invaluable insights and countless discussions on the subject. The research is supported in part by the NSF under grant DMS-2054726.

\section{{Laplace-Fourier} approach}\label{sec-FT}

In this section, we introduce the {Laplace-Fourier} approach to study the linearized problem \eqref{linVP}. The spectral analysis is classical in physics, and has been pioneered by Landau \cite{Landau}. Throughout the section, we denote by $\Ff_k(t,v), \Fphi_k(t)$ the Fourier transform in $x$ of $f(t,x,v), \phi(t,x)$, respectively, while ${\Tf_k(\lambda,v)}, \Tphi_k(\lambda)$ denote their {Laplace-Fourier} transform in $t,x$.   

\subsection{Resolvent equation}

We first derive the resolvent equation for \eqref{linVP}. Precisely, we obtain the following. 

\begin{lemma}\label{lem-resolvent} Let $\phi(t,x)$ be the electric potential of the linearized Vlasov-Poisson system \eqref{linVP}, and $\Tphi_k(\lambda)$ be the {Laplace-Fourier} transform of $\phi(t,x)$. Then, for each $k\in \RR^3\setminus\{0\}$ and $\lambda\in \CC$, there hold
\begin{equation}\label{resolvent}
\begin{aligned}
\Tphi_k(\lambda) &= \frac{1}{D(\lambda,k)} \Tphi^{\mathrm{free}}_k(\lambda),
\end{aligned}
\end{equation}
where 
\begin{equation}\label{def-Dlambda}
\begin{aligned}
D(\lambda,k) &:=  1 - \frac{1}{|k|^2} \int \frac{ ik\cdot v}{\lambda +  ik \cdot  v} \mu'(e)\;dv, \qquad \Tphi^{\mathrm{free}}_k(\lambda):= \frac{1}{|k|^2}\int \frac{\Ff_{0,k}(v)}{\lambda +  ik \cdot  v} dv,
\end{aligned}
\end{equation}
with $e = \frac12|v|^2$. 

\end{lemma}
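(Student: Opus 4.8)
The plan is to turn the PDE system \eqref{linVP} into an algebraic identity by transforming away both the spatial and the temporal variables. First I would Fourier transform in $x$. Writing $\Ff_k(t,v)$ for the Fourier transform of $f$ and using $E=-\nabla_x\phi$, so that the Fourier transform of $v\cdot E$ is $-i(k\cdot v)\Fphi_k$, the transport equation becomes the scalar ODE in $t$ (parametrized by $v$ and $k$)
\[
\partial_t \Ff_k + i(k\cdot v)\Ff_k - i(k\cdot v)\mu'(e)\Fphi_k = 0 ,
\]
while the Poisson equation becomes $|k|^2\Fphi_k(t) = \int \Ff_k(t,v)\,dv$ for $k\neq 0$.

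Next I would apply the Laplace transform in $t$ to this ODE. Using that the Laplace transform of $\partial_t\Ff_k$ equals $\lambda\Tf_k(\lambda,v)-\Ff_k^0(v)$, with $\Ff_k^0(v)=\Ff_k(0,v)$ the Fourier-transformed initial data, one solves algebraically
\[
(\lambda + i k\cdot v)\,\Tf_k(\lambda,v) = \Ff_k^0(v) + i(k\cdot v)\mu'(e)\,\Tphi_k(\lambda),
\]
hence $\Tf_k(\lambda,v)=\dfrac{\Ff_k^0(v) + i(k\cdot v)\mu'(e)\,\Tphi_k(\lambda)}{\lambda + ik\cdot v}$, which is legitimate at least whenever the denominator never vanishes, i.e. for $\Re\lambda>0$. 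Integrating in $v$ and using the Laplace-transformed Poisson relation $|k|^2\Tphi_k(\lambda)=\int\Tf_k(\lambda,v)\,dv$ gives
\[
|k|^2\Tphi_k(\lambda) = \int\frac{\Ff_k^0(v)}{\lambda + ik\cdot v}\,dv + \Tphi_k(\lambda)\int\frac{i(k\cdot v)\mu'(e)}{\lambda + ik\cdot v}\,dv .
\]
Moving the last term to the left and factoring $|k|^2$ out of the bracket produces exactly $|k|^2 D(\lambda,k)$ multiplying $\Tphi_k(\lambda)$, while the right-hand side equals $|k|^2\Tphi^0_k(\lambda)$ by the definition in \eqref{def-Dlambda}; cancelling $|k|^2$ and dividing by $D(\lambda,k)$ (where it does not vanish) yields \eqref{resolvent}.

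The essential but routine part is justifying these formal manipulations. One needs well-posedness of \eqref{linVP} together with an a priori bound — a standard energy estimate and Gr\"onwall argument, using the regularity and decay hypotheses on $\mu$ and on the data — showing that $f(t)$, and hence $\phi(t)$, grow at most exponentially in $t$; this makes the Laplace transforms converge absolutely for $\Re\lambda$ large. Then Fubini's theorem legitimizes exchanging the $dv$-integration with the $t$-integration, the $v$-integrability being supplied by the decay of $\mu'$ and of $\Ff_k^0$. I expect the only real subtlety to be pinning down the precise domain in $\lambda$: the identity is first established for $\Re\lambda\gg1$; both sides are then seen to be analytic on $\Re\lambda>0$ (the integrals converge there since $|\lambda+ik\cdot v|\ge\Re\lambda>0$), so it extends to the whole right half-plane by uniqueness of analytic continuation, and finally to the boundary $\Re\lambda=0$ by a limiting-absorption argument wherever the defining integrals still make sense. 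Everything else is elementary algebra.
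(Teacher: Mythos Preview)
Your proof is correct and follows essentially the same route as the paper: Fourier in $x$, Laplace in $t$, solve algebraically for $\Tf_k$, integrate in $v$, and close with the Poisson relation. The paper's proof is just the bare computation without your additional remarks on justification (exponential bounds, Fubini, analytic continuation), which it treats as routine.
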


\begin{remark}
Observe that the function $\Tphi^{\mathrm{free}}_k(\lambda)$ on the right-hand side of \eqref{resolvent} is the {Laplace-Fourier} transform of the electric potential generated by the free transport dynamics $\partial_t f^{\mathrm{free}} + v\cdot \nabla_x f^{\mathrm{free}} =0$. Hence, the equation \eqref{resolvent} asserts that the electric potential for the linearized Vlasov-Poisson system can be solved completely in terms of the potential generated by the free transport dynamics through the resolvent kernel $ \frac{1}{D(\lambda,k)} $. 
\end{remark}

\begin{proof}[Proof of Lemma \ref{lem-resolvent}]
Taking Laplace-Fourier transform of \eqref{linVP} with respect to variables $(t,x)$, respectively, we obtain 
\begin{equation}\label{VP-lambda1}
\begin{aligned}
(\lambda +  ik \cdot  v ) \Tf_k &= \Ff_{0,k} +ik \cdot v \mu'(e)\Tphi_k  
\end{aligned}\end{equation}
which gives 
$$  \Tf_k  = \frac{  ik \cdot v}{\lambda +  ik \cdot  v} \mu'(e)\Tphi_k+ \frac{\Ff_{0,k}}{\lambda +  ik \cdot  v} .$$
Integrating in $v$ { and recalling $\Trho_k (\lambda)= \int \Tf_k(\lambda,v)\; dv$}, we get $$
\begin{aligned} 
\Trho_k (\lambda) &= \Big( \int \frac{ ik\cdot v}{\lambda +  ik \cdot  v} \mu'(e) \;dv\Big)  \Tphi_k  + \int \frac{\Ff_{0,k}}{\lambda +  ik \cdot  v} dv ,
\end{aligned}$$
which gives \eqref{resolvent}, upon recalling the Poisson equation ${\Tphi_k(\lambda) = \frac{1}{|k|^2}\Trho_k(\lambda)}$. 
 \end{proof}

\begin{lemma}\label{lem-Dlambda} Let $\mu(e)$ be the equilibria as described in Section \ref{sec-mu}, and $D(\lambda,k)$ be defined as in \eqref{def-Dlambda}. Then, for each $k\not =0$, we can write 
\begin{equation}\label{def-Dlambda1}
\begin{aligned}
D(\lambda,k) 
= 1 - \frac{1}{|k|^2}\cH(i\lambda/|k|), \qquad \cH(z) 
= 2\pi \int_{\RR} \frac{u \mu(\frac{1}{2}u^2)}{z-u} \;du,
\end{aligned}
\end{equation}
{where $\cH(z)$ is well-defined on $\Im z\ge 0$ and analytic in the upper half plane $\{ \Im z >0\}$.}
In particular, $D(\lambda,k)$ is analytic in $\Re\lambda>0$, and 
\begin{equation}\label{unibd-cH} |\partial_z^n\cH(i\lambda/|k|)| \lesssim 1,
\end{equation}
uniformly for $k\in \RR^3$, $\Re \lambda \ge 0$, and $0\le n<N_0$.  
\end{lemma}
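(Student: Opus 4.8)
The plan is to collapse the three-dimensional velocity integral defining $D(\lambda,k)$ into a one-dimensional Cauchy integral in the direction of $k$, and then to read off analyticity and the derivative bounds from classical estimates for Cauchy-type integrals. First I would fix $k\neq 0$, set $\hat k = k/|k|$, and decompose $v = v_1\hat k + v'$ with $v_1\in\RR$ and $v'\in\RR^2$ orthogonal to $k$, so that $ik\cdot v = i|k| v_1$ and $e = \frac12(v_1^2 + |v'|^2)$. Performing the $v'$-integration first in polar coordinates and substituting $\sigma = \frac12(v_1^2+|v'|^2)$ gives
\[
\int_{\RR^2}\mu'\big(\tfrac12(v_1^2+|v'|^2)\big)\,dv' = 2\pi\int_{\frac12 v_1^2}^{\infty}\mu'(\sigma)\,d\sigma = -2\pi\,\mu(\tfrac12 v_1^2),
\]
where the last equality uses $\mu(\sigma)\to 0$ as $\sigma\to\infty$ when $\Upsilon=\infty$, resp.\ $\mu(\sigma)=0$ for $\sigma\ge\frac12\Upsilon^2$ when $\Upsilon<\infty$. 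Substituting this into \eqref{def-Dlambda} and simplifying $\frac{i|k|v_1}{\lambda+i|k|v_1}=\frac{v_1}{v_1 - i\lambda/|k|}$ identifies the remaining $v_1$-integral with $-\cH(i\lambda/|k|)$, which is exactly \eqref{def-Dlambda1}.

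For analyticity, when $\Re\lambda>0$ the point $z=i\lambda/|k|$ has $\Im z = \Re\lambda/|k|>0$, so the kernel $u\mapsto\frac{u\mu(\frac12 u^2)}{z-u}$ has no real pole and, by the decay/compact support of $\mu$, one may differentiate under the integral sign (dominated convergence); hence $\cH$ is holomorphic on the open upper half-plane and $D(\lambda,k)$ is analytic on $\{\Re\lambda>0\}$, with $\partial_z^n\cH(z)= 2\pi(-1)^n n!\int_\RR\frac{u\mu(\frac12 u^2)}{(z-u)^{n+1}}\,du$ there.

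For the uniform bounds \eqref{unibd-cH}, I would integrate by parts $n$ times in $u$, using $\partial_z^n\frac{1}{z-u}=(-1)^n\partial_u^n\frac{1}{z-u}$ and the decay (when $\Upsilon=\infty$) or vanishing at the edge $|v|=\Upsilon$ (when $\Upsilon<\infty$) of $\mu$ and its relevant derivatives to discard the boundary terms, arriving at
\[
\partial_z^n\cH(z) = 2\pi\int_\RR\frac{g_n(u)}{z-u}\,du,\qquad g_n(u):=\partial_u^n\big[u\,\mu(\tfrac12 u^2)\big].
\]
For $n$ in the stated range the hypothesis $\mu\in C^{N_0}$ makes $g_n$ at least Lipschitz, and $g_n$ is supported in $[-\Upsilon,\Upsilon]$ (or rapidly decaying when $\Upsilon=\infty$). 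A Cauchy integral $\int\frac{g(u)}{z-u}\,du$ of such a density extends continuously to the closed upper half-plane with a bound depending only on $\|g\|_{C^{0,1}}$ and the size/decay of its support: the standard splitting $\int\frac{g(u)-g(\Re z)}{z-u}\,du + g(\Re z)\int\frac{du}{z-u}$ bounds the first term by the Lipschitz constant times the support measure, since $|g(u)-g(\Re z)|\le L|u-\Re z|\le L|z-u|$, and the second by a logarithm that is killed by the vanishing/decay of $g$ at $\Re z$ (treating $\Re z$ far from the support of $\mu$ directly). Since $z=i\lambda/|k|$ sweeps out the whole closed upper half-plane as $k\in\RR^3\setminus\{0\}$ and $\Re\lambda\ge 0$, this yields \eqref{unibd-cH}.

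The main obstacle is this last step on the boundary $\Re\lambda=0$, where $z$ is real and the kernel $(z-u)^{-n-1}$ is genuinely singular at $u=z$: one must exploit the smoothness of $\mu$ (to keep $g_n$ Lipschitz after the $n$ differentiations) together with its vanishing rate at the maximal velocity $\Upsilon$ (or decay at infinity) to turn a formally divergent singular integral into a bona fide bounded Cauchy integral, uniformly over all $k$ and all $\Re\lambda\ge0$. Keeping track of the boundary terms produced by the repeated integration by parts, in particular near the edge $|v|=\Upsilon$ of the support, is the delicate bookkeeping, and it is here that the regularity and vanishing/decay hypotheses of Section \ref{sec-mu} (with $N_0,N_1\ge 4$) enter.
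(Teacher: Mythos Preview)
Your reduction of the three-dimensional integral to the one-dimensional Cauchy integral $\cH(z)$ is essentially identical to the paper's: both decompose $v$ along and orthogonal to $k$, integrate out the transverse variable in polar coordinates, and recognize the antiderivative $\int_{\frac12 u^2}^\infty \mu(s)\,ds$.

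For the uniform bounds \eqref{unibd-cH}, however, you take a genuinely different route. The paper rewrites $\cH(z)$ as a Laplace-type integral
\[
\cH(z) = \int_0^\infty e^{izt} N(t)\,dt,\qquad N(t) = 2\pi\int_\RR e^{-iut} u\,\mu(\tfrac12 u^2)\,du,
\]
and obtains $|N(t)| \lesssim \langle t\rangle^{-N_0}$ by integrating by parts $N_0$ times in $u$ using the $C^{N_0}$ regularity of $\mu$. The estimate $|\partial_z^n\cH(z)| \lesssim \int_0^\infty t^n\langle t\rangle^{-N_0}\,dt \lesssim 1$ then falls out uniformly for all $\Im z \ge 0$, with no case analysis on the location of $\Re z$. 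Your approach instead integrates by parts directly in the Cauchy integral to reach $\int \frac{g_n(u)}{z-u}\,du$ and invokes a Privalov-type splitting. This is correct in principle, but requires separate treatment of $\Re z$ inside, near the edge of, and outside the support of $g_n$; your phrase ``killed by the vanishing/decay of $g$ at $\Re z$'' is slightly misleading, since the logarithm in the second piece is only large when $\Re z$ is near the endpoints $\pm\Upsilon$, and it is the vanishing of $g_n$ \emph{there} (not at a generic $\Re z$) that controls it. In the noncompact case $\Upsilon=\infty$ your splitting also needs to be localized, since $\int_\RR \frac{du}{z-u}$ diverges. None of this is fatal, but the paper's Fourier representation is cleaner, avoids all such casework, and---more importantly---is reused verbatim later in the paper to bound the remainder $\cR(\lambda,k)$ in the expansion of $D(\lambda,k)$ and to control the Green function; so it buys more than this lemma alone.
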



\begin{proof} By definition, we may write 
$$
\begin{aligned}
D(\lambda,k) 
& = 1 + \frac{1}{|k|^2} \int_{\RR^3} \frac{k \cdot v}{ i\lambda  - k \cdot  v} \mu'(e)\; dv .
\end{aligned}
$$
For $k\not =0$, we introduce the change of variables 
\begin{equation}\label{change-vu}
u = \frac{k\cdot v}{|k|} , \qquad w = v -   \frac{(k\cdot v)k}{|k|^2},
\end{equation}
with the Jacobian determinant equal to one. Note that $|v|^2 = u^2 + |w|^2$. This yields 
$$
\begin{aligned}
D(\lambda,k) 
& = 1 + \frac{1}{|k|^2} \int_{\RR} \frac{u}{  i\lambda/|k| - u} \Big(\int_{w\in k^\perp} \mu'(\frac12u^2 + \frac12 |w|^2)\; dw\Big)\; du
\\
& = 1 + \frac{1}{|k|^2} \int_{\RR} \frac{1}{  i\lambda/|k| - u } \frac{d}{du}\Big(\int_{w\in k^\perp} \mu(\frac12u^2 + \frac12 |w|^2)\; dw\Big)\; du.
\end{aligned}
$$
For each $u \in \RR$, set \begin{equation}\label{def-kappau}
\kappa(u) = \int_{w\in k^\perp } \mu(\frac{1}{2}(u^2 + |w|^2))\; dw.
\end{equation}
Now parametrizing the hyperplane $k^\perp$ via the polar coordinates with radius $r = |w|$, and then setting $s = \frac{1}{2}(u^2 + r^2)$, we have 
\begin{equation}\label{3Duse}
\kappa(u) = 2\pi \int_0^\infty \mu(\frac{1}{2}(u^2 + r^2))\; r dr = 2\pi \int_{\frac12 u^2}^\infty \mu(s) \; ds.
\end{equation}
In particular, this gives $\kappa'(u) = -2\pi u \mu(\frac{1}{2}u^2)$, and therefore, 
$$
\begin{aligned}
D(\lambda,k) 
& = 1 - \frac{2\pi}{|k|^2} \int_{\RR} \frac{u \mu(\frac12 u^2)}{ i\lambda/|k|  -u }\; du,
\end{aligned}
$$
which gives \eqref{def-Dlambda1}, upon setting $\cH(z)$ to be the integral term with $z = i\lambda/|k|$. Since $\mu(e)$ decays rapidly in the particle energy, {$\cH(z)$ and its derivatives are well-defined, and therefore analytic in $z$} on the upper half plane $\{ \Im z >0\}$. In addition, we may write 
\begin{equation}\label{redef-cH}
\cH(z) 
= 2\pi \int_{\RR} \frac{u \mu(\frac{1}{2}u^2)}{z-u} \;du
= 2\pi \int_0^\infty e^{iz t} \int_{\RR} e^{-iu t} u \mu(\frac{1}{2}u^2)\;dudt.
\end{equation}
Set
\begin{equation}\label{def-Nt}
N(t) = 2\pi \int_{\RR} e^{-iu t} u \mu(\frac{1}{2}u^2)\;du.
\end{equation} 
Taking integration by parts in $u$ and using the regularity of $\mu(e)$, we obtain 
\begin{equation}\label{bounds-Nt}
|\partial_t^n N(t)|\le C_n\langle t \rangle^{-K_0} ,
\end{equation}
for any $0\le n<N_0$, where $K_0, N_0$ are regularity and decay indexes as described in Section \eqref{sec-mu}. 
This gives 
\begin{equation}\label{bd-cH1}
|\cH(z)| 
\le  C_0 \int_0^\infty e^{- \Im z t}  \langle t \rangle^{-K_0} dt \lesssim 1,
\end{equation}
for any $\Im z\ge 0$. Similar bounds hold for $z$-derivatives, giving \eqref{unibd-cH} as claimed. 
\end{proof}


\begin{remark}\label{rem-monotone} Note that the proof of Lemma \ref{lem-Dlambda} makes a crucial use of the three-dimensional space for velocities $v$, precisely the computation of \eqref{3Duse}. A similar calculation works in higher dimensions, but not in dimension one or two. In particular, this yields the monotonicity of marginals $\kappa(u)$, namely $\kappa'(u) = -2\pi u \mu(\frac{1}{2}u^2)$, without any monotonicity assumption on $\mu(\cdot)$. 
\end{remark}

{
\begin{remark}\label{rem-mass}
Note that $\int_{\RR^3} \mu(\frac12 |v|^2) \; dv= \tau_0^2$ with $\tau_0$ defined as in \eqref{def-taukappa0}. Indeed, we compute   
$$
\begin{aligned} \int_{\RR^3} \mu(\frac12 |v|^2) \; dv &= - \frac13 \int_{\RR^3} |v|^2 \mu'(\frac12 |v|^2) \; dv = - \int_{\RR^3} |v_1|^2 \mu'(\frac12 |v|^2) \; dv
\\&= - \int_{\RR} v_1 \frac{d}{dv_1} \Big(\int_{\RR^2}\mu(\frac12(|v_1|^2 + |w|^2)) \; dw\Big)dv_1 = - \int_{\RR} v_1 \kappa'(v_1) \; dv_1
\end{aligned}
$$
for $\kappa(u)$ defined as in \eqref{def-kappau} (noting $k^\perp = \RR^2$ in this case). Using $\kappa'(u) = -2\pi u \mu(\frac{1}{2}u^2)$ as computed above, see \eqref{3Duse}, we obtain $\int_{\RR^3} \mu(\frac12 |v|^2) \; dv= \tau_0^2$ as claimed.   
\end{remark}
}

\subsection{Spectral stability}\label{sec-spectral}

In view of the resolvent equation \eqref{resolvent} and its mode solution \eqref{def-mode}, the zeros $\lambda = \lambda(k)$ to the dispersion relation $D(\lambda,k)=0$ plays a crucial role in studying the large time dynamics of the linearized problem \eqref{linVP}. In this section, we shall prove that there is no solution in the right half plane $\Re \lambda>0$. Namely, we obtain the following. 

\begin{proposition}\label{prop-nogrowth} Let $\mu(\frac12 |v|^2)$ be any non-negative radial equilibria in $\RR^3$. Then, the linearized system \eqref{linVP} has no nontrivial mode solution of the form $e^{\lambda t + ik\cdot x} \Ff_k(v)$ with $\Re \lambda >0$ for any nonzero function $\Ff_k(v)$. 
\end{proposition}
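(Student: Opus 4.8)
The statement is the classical Penrose--type spectral stability result, and the natural route is to show that $D(\lambda,k)\neq 0$ for every $k\neq 0$ and every $\lambda$ with $\Re\lambda>0$; by Remark \ref{rem-mode} this rules out all growing mode solutions, and the case $k=0$ is trivial since then the electric potential vanishes. By Lemma \ref{lem-Dlambda} we may write $D(\lambda,k) = 1 - |k|^{-2}\cH(z)$ with $z = i\lambda/|k|$, so that $\Re\lambda>0$ corresponds precisely to $\Im z>0$. Thus it suffices to prove that $\cH(z)\neq |k|^2$ for all $z$ in the open upper half-plane; since $|k|^2>0$ is real, it is enough to show that $\Im\cH(z)\neq 0$ whenever $\Im z>0$, \emph{except possibly} where $\Re\cH(z)\le 0$, and then handle the latter case separately. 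Concretely I would compute, for $z = a+ib$ with $b>0$,
\begin{equation*}
\Im \cH(z) = 2\pi \int_{\RR} \Im\!\left(\frac{1}{z-u}\right) u\,\mu(\tfrac12 u^2)\,du = -2\pi b \int_{\RR} \frac{u\,\mu(\tfrac12 u^2)}{(a-u)^2 + b^2}\,du.
\end{equation*}

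\textbf{Key steps.} First, the integrand $u\,\mu(\tfrac12 u^2)$ is an odd function of $u$, so without the denominator the integral would vanish; the point is to exploit the weight $((a-u)^2+b^2)^{-1}$ together with the monotonicity of the marginal built in Lemma \ref{lem-Dlambda}. The cleanest argument is to integrate by parts using $\kappa'(u) = -2\pi u\,\mu(\tfrac12 u^2)$ from \eqref{3Duse}, which turns the expression into
\begin{equation*}
\Im\cH(z) = -b \int_{\RR} \kappa(u)\, \frac{d}{du}\!\left(\frac{1}{(a-u)^2+b^2}\right) du = -b\int_{\RR} \kappa(u)\, \frac{2(a-u)}{\big((a-u)^2+b^2\big)^2}\,du,
\end{equation*}
the boundary terms vanishing because $\kappa(u)\to 0$ rapidly as $|u|\to\infty$ by \eqref{decay-mu}/\eqref{vanish-mu}. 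Now substitute $s = a-u$: the kernel $s\big(s^2+b^2\big)^{-2}$ is odd, so the integral equals $b\int_0^\infty \big(\kappa(a+s)-\kappa(a-s)\big)\, s\big(s^2+b^2\big)^{-2}\,ds$, and since $\kappa$ is non-increasing (because $\kappa' = -2\pi u\mu \le 0$ for $u\ge 0$ but is \emph{not} sign-definite in general --- here one must be slightly careful) one wants $\kappa(a+s)-\kappa(a-s)$ to have a fixed sign. In fact $\kappa(u) = 2\pi\int_{u^2/2}^\infty \mu(s)\,ds$ depends only on $|u|$ and is strictly decreasing in $|u|$, so $\kappa(a+s)-\kappa(a-s)<0$ for all $s>0$ when $a\ge 0$ and $>0$ when $a\le 0$ (and it is not identically zero for $a\neq 0$, using connectedness of the support), giving $\Im\cH(z)\neq 0$ unless $a=0$. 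Finally, for $a = 0$ one computes directly that $\cH(ib) = 2\pi\int_\RR \frac{u\mu(\tfrac12 u^2)}{ib - u}\,du = -2\pi\int_\RR \frac{u^2\mu(\tfrac12 u^2)}{u^2+b^2}\,du < 0$ is real and strictly negative, hence cannot equal $|k|^2>0$. Combining the two cases yields $D(\lambda,k)\neq 0$ on $\Re\lambda>0$.

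\textbf{Main obstacle.} The delicate point is the sign analysis of $\kappa(a+s)-\kappa(a-s)$ without any monotonicity hypothesis on $\mu$ itself: the resolution is that, as emphasized in Remark \ref{rem-monotone}, the three-dimensional velocity integration forces the marginal $\kappa(u) = 2\pi\int_{u^2/2}^\infty\mu(s)\,ds$ to be automatically a strictly decreasing function of $|u|$ regardless of the shape of $\mu$, and this monotonicity of the marginal --- rather than of $\mu$ --- is exactly what makes the Penrose argument go through. One also needs the strict positivity of $\mu$ on $\{|v|<\Upsilon\}$ (connected support) to guarantee that $\kappa(a+s)-\kappa(a-s)$ does not vanish identically for $a\neq 0$, so that the inequality is strict. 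A secondary technical point is justifying the integration by parts and the decay of the boundary terms, which is immediate from the decay/vanishing assumptions \eqref{decay-mu}--\eqref{vanish-mu} on $\mu$.
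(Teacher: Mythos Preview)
Your argument is correct, but it takes a genuinely different route from the paper's. The paper argues by contradiction in a very short, purely algebraic way: writing $\cH(z)=2\pi\int\frac{(\Re z-u)u\mu}{|z-u|^2}\,du - 2i\pi\Im z\int\frac{u\mu}{|z-u|^2}\,du$, if $\cH(z)=|k|^2$ with $\Im z>0$ then $\Im\cH=0$ forces $\int\frac{u\mu}{|z-u|^2}\,du=0$; feeding this back into $\Re\cH=|k|^2$ kills the $\Re z$ term and leaves $|k|^2 + 2\pi\int\frac{u^2\mu}{|z-u|^2}\,du=0$, a contradiction by non-negativity. No integration by parts, no case split on $\Re z$, and only the bare hypothesis $\mu\ge 0$ is used.

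Your approach instead proves the stronger statement that $\Im\cH(z)\neq 0$ whenever $\Re z\neq 0$, via integration by parts against the marginal $\kappa$ and the sign analysis of $\kappa(a+s)-\kappa(a-s)$, with the axis $\Re z=0$ handled separately. This is perfectly valid and has the merit of making the role of the monotone marginal (Remark~\ref{rem-monotone}) very explicit, which is conceptually nice. Two minor remarks: first, your worry about needing connected support is unnecessary even within your own scheme---since for any $a>0$ the intervals $[|a-s|^2/2,(a+s)^2/2]$ sweep out all of $[0,\infty)$ as $s$ varies, $\kappa(a+s)-\kappa(a-s)$ is strictly nonzero on a set of positive measure as soon as $\mu\not\equiv 0$; second, there is a harmless sign/factor slip in your symmetrization (the integrand comes out as $\kappa(a-s)-\kappa(a+s)$ with a factor $2b$), which does not affect the conclusion $\Im\cH\neq 0$. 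The paper's trick is shorter and uses strictly less, but yours gives more information about where $\cH$ actually lives in the plane.
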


\begin{proof} In view of a mode solution \eqref{def-mode}, it suffices to prove that for any $k\in \RR^3\setminus\{0\}$, $D(\lambda,k) \not =0$ for $\Re \lambda >0$, or equivalently $\cH(z)\not =|k|^2$ for $\Im z >0$. Indeed, in view of \eqref{def-Dlambda1}, we first write
$$
\begin{aligned}
\cH(z) &= 2\pi \int_{\RR} \frac{u \mu(\frac{1}{2}u^2)}{z-u} \;du 
\\&= 2\pi \int_{\RR} \frac{(\Re z-u)u \mu(\frac{1}{2}u^2)}{|z-u|^2} \;du - 2i\pi \Im z \int_{\RR} \frac{u \mu(\frac{1}{2}u^2)}{|z-u|^2} \;du .
\end{aligned}$$
Hence, suppose that $\cH(z)=|k|^2$ for some $z$ with $\Im z >0$. This implies that $\Im \cH(z) =0$, which gives $$2\pi\int_{\RR} \frac{u \mu(\frac{1}{2}u^2)}{|z-u|^2} \;du =0,$$ 
since $\Im z>0$. On the other hand, {using the above identity,} $\cH(z) = |k|^2$ is now reduced to 
$$|k|^2 + 2\pi \int_{\RR} \frac{u^2 \mu(\frac{1}{2}u^2)}{|z-u|^2} \;du =0,$$
which is a contradiction, since $\mu \ge 0$. 
The proposition follows. 
\end{proof}

\subsection{No embedded eigenvalues}\label{sec-emlambda}

In this section, we study the dispersion relation $D(\lambda,k)=0$ on the imaginary axis $\lambda = i\tau$ so that $|\tau| < |k|\Upsilon$, where $\Upsilon$ as in \eqref{def-Upsilon}. Note that this is the region where $\lambda$ is in the interior of the essential spectrum of the free transport dynamics $\partial_t + v\cdot \nabla_x$, for particle velocity $v$ in the support of the equilibria, {namely $\lambda \in \mathrm{Range}(ik\cdot v)$ for $|v|\le \Upsilon$.} {Precisely, we obtain the following.

\begin{proposition}\label{prop-noembedded} Let $\mu(\frac12 |v|^2)$ be any non-negative radial equilibria with connect support in $\RR^3$, and let $\Upsilon$ be the maximal particle speed as in \eqref{def-Upsilon}. Then, the linearized system \eqref{linVP} has no 
nontrivial mode solution of the form $e^{\lambda t + ik\cdot x} \Ff_k(v)$, when $\lambda = i\tilde\tau|k|$ with $|\tilde\tau| < \Upsilon$, for any nonzero function $\Ff_k(v)$. In addition, for any compact subset $U$ in $\{| \tilde\tau|< \Upsilon\}$, there is a positive constant $c_U$ so that 
 \begin{equation}\label{lowbd-Ditau2}
|D(i\tilde\tau|k|,k)| \ge c_U \Big(1 + \frac{1}{|k|^2}\Big), \qquad \forall ~ \tilde\tau\in U,
\end{equation}
uniformly for any $k\not=0$. 

\end{proposition}
}
\begin{proof}
We first recall from \eqref{def-Dlambda1} that for $z = i\tilde\gamma - \tilde\tau$, 
$$\cH(i\tilde\gamma - \tilde\tau) 
= 2\pi \int_{\{|u| < \Upsilon\}} \frac{u \mu(\frac{1}{2}u^2)}{i\tilde\gamma - \tilde\tau-u} \;du.
$$
Therefore, for $|\tilde\tau|<\Upsilon$, using the Plemelj's formula, we obtain 
\begin{equation}\label{Plemelj}
 \lim_{\tilde\gamma \to 0^+}\cH(i\tilde\gamma - \tilde\tau) 
=  -2\pi P.V. \int_{\{|u|<\Upsilon\}} \frac{u \mu(\frac{1}{2}u^2)}{u + \tilde\tau} du  {~+~} 2 i \pi^2 \tilde\tau \mu(\frac{1}{2}\tilde\tau^2) ,
\end{equation}
where $P.V.$ denotes the Cauchy principal value associated to the singularity at $u =- \tilde\tau$. Recalling \eqref{def-Dlambda1}, {for $\lambda = (\tilde\gamma + i\tilde \tau)|k|$ with $\tilde \gamma\to 0$}, we thus obtain 
\begin{equation}\label{form-Ditau2}
\begin{aligned}
D(i\tilde\tau|k|,k) &{
= 1 - \frac{1}{|k|^2}\lim_{\tilde \gamma \to 0^+}\cH(i\tilde \gamma - \tilde \tau)}
\\&= 1 + \frac{2\pi }{|k|^2} P.V. \int_{\{|u|<\Upsilon\}} \frac{u \mu(\frac{1}{2}u^2)}{u+\tilde\tau} du {~-~} \frac{2i \pi^2 }{|k|^2} \tilde\tau \mu(\frac{1}{2}\tilde\tau^2)
.
\end{aligned}
\end{equation}
In particular, this implies that 
$$|D(i\tilde\tau|k|,k)|\ge |\Im D(i\tilde\tau|k|,k)|\gtrsim  \frac{1}{|k|^2} |\tilde\tau| \mu(\frac{1}{2}\tilde\tau^2)$$
for any $|\tilde\tau|<\Upsilon$, recalling that $\mu(\cdot)$ is positive on its support. On the other hand, at $\tilde\tau=0$, we have 
$$
D(0,k) = 1 + \frac{2\pi }{|k|^2} \int_{\{|u|<\Upsilon\}}  \mu(\frac{1}{2}u^2) du, 
$$
which again gives $|D(0,k)| \gtrsim 1 + |k|^{-2}$, since $\mu\ge 0$. This proves that there are no embedded eigenvalues that lie on the imaginary axis so that $|\lambda| < |k|\Upsilon$. In fact, for any compact subset $U$ in $\{| \tilde\tau|< \Upsilon\}$, since $\mu(\cdot)$ is strictly positive on $U$, there is a positive constant $c_U$ so that 
$$
|D(i\tilde\tau|k|,k)| \ge c_U \Big(1 + \frac{1}{|k|^2}\Big), \qquad \forall ~ \tilde\tau\in U,
$$
proving \eqref{lowbd-Ditau2}. 
\end{proof}

\begin{remark} It follows from Proposition \ref{prop-noembedded} that if $\Upsilon = \infty$, there are no pure oscillatory modes on the imaginary axis. When $\Upsilon <\infty$, we can also compute from \eqref{form-Ditau2} that 
$$
\begin{aligned}
D(\pm i|k|\Upsilon ,k) &= 1 + \frac{2\pi }{|k|^2} \int_{\{|u|<\Upsilon\}} \frac{u \mu(\frac{1}{2}u^2)}{u \pm \Upsilon} du 
\\&= 1 -\frac{2\pi }{|k|^2} \int_{\{|u|<\Upsilon\}} \frac{u^2 \mu(\frac{1}{2}u^2)}{\Upsilon^2 - u^2} du,
\end{aligned}$$
which yields 
\begin{equation}\label{lowbd-Ditau3}
D(\pm i|k|\Upsilon ,k) = 1 -\frac{\kappa_0^2}{|k|^2}.
\end{equation}
In particular, if $\Upsilon <\infty$, the lower bound \eqref{lowbd-Ditau2} holds for $U = \{| \tilde\tau|\le \Upsilon\}$ for any $|k|\le \frac12 \kappa_0^2$ (recalling $\kappa_0>0$, when $\Upsilon<\infty$).
\end{remark}

\subsection{Langmuir's oscillatory waves}

In this section, we prove the existence of pure imaginary solutions to the dispersion relation $D(\lambda,k)=0$ for $\lambda = i\tau$, necessarily for $|\tau|\ge |k|\Upsilon$, when $\Upsilon< \infty$, as no such solutions exist for $|\tau|< |k|\Upsilon$, see Section \ref{sec-emlambda}. This in particular confirms the existence of pure oscillatory modes or Langmuir's waves known in the physical literature. Precisely, we obtain the following. 

\begin{theorem}\label{theo-LangmuirE} Fix an $N_0\ge 4$, and let $\mu(\frac12|v|^2)$ be a non-negative equilibrium as described in Section \ref{sec-mu}, $\Upsilon$ be as in \eqref{def-Upsilon}, and {$\tau_j^2, \kappa_j^2$ as in \eqref{def-taukappa0}}. Then, for any $0\le |k| \le \kappa_0$, there are exactly two zeros $\lambda_\pm= \pm i \tau_*(k)$ of the electric dispersion relation $ D(\lambda,k) = 0$
that lie on the imaginary axis $\{\Re \lambda =0\}$, where $\tau_*(k)$ is {$C^{N_0-2}$ regular}, strictly increasing in $|k|$, with $\tau_*(0) = \tau_0$ and $\tau_*(\kappa_0) = \sqrt{\tau_0^2 + \kappa_1^2}$. In particular, 
\begin{equation}\label{range-taus}\tau_0\le \tau_*(k) \le  \sqrt{\tau_0^2 + \kappa_1^2},\end{equation}
and there are positive constants $c_0, C_0$ so that 
\begin{equation}\label{lowerbound-taustarDE} 
 c_0 |k|\le \tau'_*(k) \le C_0|k|, \qquad  c_0 \le \tau''_*(k) \le C_0,
\end{equation}
for $0\le |k|\le \kappa_0$. In addition, the phase velocity $\nu_*(k) = \tau_*(k)/|k|$ is strictly decreasing in $|k|$ with $\nu_*(0) = \infty$ and $\nu_*(\kappa_0) = \Upsilon$. 
\end{theorem}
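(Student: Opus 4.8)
The plan is to work directly with the representation $D(\lambda,k)=1-|k|^{-2}\cH(i\lambda/|k|)$ of Lemma~\ref{lem-Dlambda}. On the imaginary axis, $\lambda=i\sigma$ with $\sigma\in\RR$, one has $i\lambda/|k|=-\sigma/|k|\in\RR$. By Section~\ref{sec-emlambda} there are no imaginary zeros with $|\sigma|<|k|\Upsilon$ and $D(0,k)\neq 0$, so I would restrict to $|\sigma|\ge|k|\Upsilon$, i.e.\ to real arguments $z=-\sigma/|k|$ lying outside $\mathrm{supp}\,\mu(\tfrac12|\cdot|^2)=(-\Upsilon,\Upsilon)$, where the integral defining $\cH$ is genuinely convergent. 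Since $\mu(\tfrac12 u^2)$ is even in $u$, pairing $u\leftrightarrow -u$ turns $\cH(-\sigma/|k|)$ into the manifestly real quantity
\[
\cH(-\sigma/|k|)=4\pi\int_0^\Upsilon\frac{u^2\mu(\tfrac12 u^2)}{\sigma^2/|k|^2-u^2}\,du=Q\!\left(\sigma^2/|k|^2\right),\qquad Q(y):=4\pi\int_0^\Upsilon\frac{u^2\mu(\tfrac12 u^2)}{y-u^2}\,du,
\]
defined for $y\ge\Upsilon^2$. Thus, for $k\neq 0$, the dispersion relation $D(i\sigma,k)=0$ is equivalent to the single scalar equation $Q\big(\sigma^2/|k|^2\big)=|k|^2$.

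\textbf{Step 2: existence, uniqueness, and the phase velocity.}
Next I would record the elementary monotonicity of $Q$ on $[\Upsilon^2,\infty)$: it is continuous up to $y=\Upsilon^2$ (integrability at $u=\Upsilon$ uses that $\mu$ vanishes there to order $N_1\ge 4$), strictly decreasing since $Q'(y)=-4\pi\int_0^\Upsilon u^2\mu(\tfrac12 u^2)(y-u^2)^{-2}\,du<0$, with $Q(\Upsilon^2)=\kappa_0^2$ by \eqref{def-introkappa0} and $Q(y)\to 0$ as $y\to\infty$. Hence $Q:[\Upsilon^2,\infty)\to(0,\kappa_0^2]$ is a decreasing bijection, and for each $0<|k|\le\kappa_0$ the equation $Q(\sigma^2/|k|^2)=|k|^2$ has a unique positive root; since it depends on $\sigma$ only through $\sigma^2$, the zeros on the imaginary axis are exactly $\lambda_\pm(k)=\pm i\tau_*(k)$ with
\[
\tau_*(k):=|k|\sqrt{Q^{-1}(|k|^2)},\qquad\nu_*(k):=\tau_*(k)/|k|=\sqrt{Q^{-1}(|k|^2)}.
\]
(At $|k|=0$ this is understood by continuous extension; at $|k|=\kappa_0$ one has $\tau_*(\kappa_0)=\kappa_0\Upsilon$, which is a genuine root by \eqref{lowbd-Ditau3}.) The phase velocity $\nu_*$ is the composition of the strictly decreasing $Q^{-1}$ with $|k|\mapsto|k|^2$, hence strictly decreasing, with $\nu_*(\kappa_0)^2=Q^{-1}(\kappa_0^2)=\Upsilon^2$ and $\nu_*(k)^2=Q^{-1}(|k|^2)\to\infty$ as $|k|\to0$. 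For the range of $\tau_*$: from $Q(y)\sim\tau_0^2/y$ as $y\to\infty$ one gets $\tau_*(k)^2=|k|^2Q^{-1}(|k|^2)\to\tau_0^2$, while $\tau_*(\kappa_0)^2=\kappa_0^2\Upsilon^2=\tau_0^2+\kappa_1^2$ after the algebraic identity $\kappa_0^2\Upsilon^2-\tau_0^2=4\pi\int_0^\Upsilon u^4\mu(\tfrac12 u^2)(\Upsilon^2-u^2)^{-1}\,du$; together with the monotonicity below this yields \eqref{range-taus}.

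\textbf{Step 3: regularity and the Klein--Gordon bounds \eqref{lowerbound-taustarDE}.}
Since $Q\in C^{N_1-1}$ up to $y=\Upsilon^2$ with $Q'\neq 0$, the inverse function theorem gives $Q^{-1}\in C^{N_1-1}$ on $(0,\kappa_0^2]$, and near $|k|=0$ the expansion $\tau_*(k)^2=\tau_0^2+\tau_0^{-2}\tau_1^2|k|^2+\cO(|k|^4)$ holds (consistent with \eqref{Langmuir0}); hence $\tau_*$ is of class $C^{N_1-1}\subset C^3$ on $[0,\kappa_0]$ (note $\tau_*^2\ge\tau_0^2>0$). For the quantitative bounds I would set $T(\kappa):=\tau_*(\kappa)^2=\kappa^2 r(\kappa^2)$ with $r:=Q^{-1}$, write $r=\nu_*(k)^2$ and $A_n:=4\pi\int_0^\Upsilon u^2\mu(\tfrac12 u^2)(r-u^2)^{-(n+1)}\,du$, so $A_0=|k|^2$, $Q'(r)=-A_1$, $Q''(r)=2A_2$. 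One implicit differentiation of $Q(r)=|k|^2$ gives
\[
T'(\kappa)=2\kappa\,\frac{\displaystyle\int_0^\Upsilon u^4\mu(\tfrac12 u^2)(r-u^2)^{-2}\,du}{\displaystyle\int_0^\Upsilon u^2\mu(\tfrac12 u^2)(r-u^2)^{-2}\,du},
\]
which is $2\kappa$ times a weighted average of $u^2$ over $(0,\Upsilon)$, hence $c_1\kappa\le T'(\kappa)\le\Upsilon^2\kappa$ with $c_1>0$ by positivity of $\mu$ on its support (uniform over the compact range by continuity). Since $\tau_0^2\le T\le\kappa_0^2\Upsilon^2$, this gives $\tau_*'(\kappa)=T'(\kappa)/(2\sqrt{T(\kappa)})\asymp|k|$, and in particular strict monotonicity of $\tau_*$. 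For convexity I would use $(\sqrt T)''=\big(2TT''-(T')^2\big)/(4T^{3/2})$; a second implicit differentiation reduces $2TT''-(T')^2$ to a strictly positive multiple of
\[
4r A_0 A_2-3r A_1^2-A_0 A_1,
\]
which is nonnegative because $A_1^2\le A_0 A_2$ (Cauchy--Schwarz, writing $u^2\mu(r-u^2)^{-2}=\big(u\sqrt\mu(r-u^2)^{-1/2}\big)\big(u\sqrt\mu(r-u^2)^{-3/2}\big)$) and $A_1\le r A_2$ (since $0<r-u^2<r$ on $(0,\Upsilon)$), so that $3rA_1^2+A_0A_1\le 3rA_0A_2+A_0(rA_2)=4rA_0A_2$. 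Both inequalities are strict unless $\mu$ is a point mass, so $2TT''-(T')^2\ge c>0$ on $|k|\in[0,\kappa_0]$ by compactness; dividing by $4T^{3/2}$ (bounded above and below by positive constants) gives $c_0\le\tau_*''\le C_0$.

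\textbf{Main obstacle.}
I expect the convexity estimate in Step~3 to be the crux: one must perform the double implicit differentiation correctly (tracking signs, since $r'<0$), isolate the precise combination $4rA_0A_2-3rA_1^2-A_0A_1$, and recognize it as controlled by Cauchy--Schwarz together with the elementary bound $A_1\le rA_2$. A secondary but genuine technical point is the endpoint $|k|=\kappa_0$, where $r=\Upsilon^2$ and the integrands of $A_1,A_2$ are singular at $u=\Upsilon$; the hypothesis that $\mu$ vanishes there to order $N_1\ge 4$ is exactly what keeps these moments finite, makes $\tau_*$ of class $C^2$ up to $\kappa_0$, and renders $Q$ continuous in Step~2.
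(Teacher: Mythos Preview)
Your proposal is correct and follows essentially the same route as the paper: reduction to a real scalar equation via the evenness of $\mu$, existence and uniqueness from monotonicity of that function, and the convexity bound $\tau_*''\ge c_0$ by twice implicitly differentiating $T=\tau_*^2$ and invoking a Cauchy--Schwarz inequality on the moment integrals. The only differences are cosmetic---the paper works in the variable $y=|k|^2/\tau_*^2$ with $\omega(y)=4\pi\int_0^\Upsilon u^2\mu(1-yu^2)^{-1}du$ and the H\"older bound $(\omega')^2\le\tfrac12\,\omega\,\omega''$, whereas you use the inverse variable $r=\nu_*^2$ and the equivalent pair $A_1^2\le A_0A_2$, $A_1\le rA_2$; the one point to make explicit is that your factorization of $2TT''-(T')^2$ is of indeterminate form at $\kappa=0$, but the expansion you already recorded gives $2TT''-(T')^2\big|_{\kappa=0}=4\tau_1^2>0$, which closes the compactness argument on $[0,\kappa_0]$.
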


\begin{remark} 
In the case when $\Upsilon =\infty$, Theorem \ref{theo-LangmuirE} in fact applies, with $\kappa_0 =0$ and $\kappa_1=0$, giving only trivial oscillatory modes $\lambda_\pm(0) = \pm i \tau_0$. 

\end{remark}

\begin{proof} In view of Section \ref{sec-emlambda}, it suffices to consider the case when $\Upsilon<\infty$ and $\lambda = i\tau$, with $|\tau|\ge |k| \Upsilon$. Recall from \eqref{def-Dlambda1} that the dispersion relation $D(i\tau,k)=0$ is equivalent to solving the equation 
\begin{equation}\label{eqs-cHk} 
 |k|^2 = \cH(z) = 2\pi \int_{\RR} \frac{u \mu(\frac{1}{2}u^2)}{z-u} \;du = 2\pi \int_{\{|u|<\Upsilon\}} \frac{u^2\mu(\frac12 u^2)}{z^2- u^2} \;du
\end{equation}
for real value $z = i\lambda/|k| = -\tau/|k|$. Note that since $|\tau|\ge |k|\Upsilon$, we have $|z|\ge \Upsilon$ and so the above integral makes sense, recalling that $\mu(\frac12 u^2)$ decays rapidly to zero as $u \to \pm \Upsilon$. 
Therefore, $\cH(z)$ is a well-defined, radial, and real-valued function on $[\Upsilon, \infty]$, with $\cH(\infty)=0$ and $\cH(\Upsilon) = \kappa_0^2$, 
where $\kappa_0$ is defined as in \eqref{def-taukappa0}. 
In addition, $\cH'(z) <0$ for $z> \Upsilon$, and so $\cH(z)$ is a bijection from $[\Upsilon, \infty]$ to $[0,\kappa_0^2]$. As a result, 
the inverse map $\cH^{-1}(\cdot)$ is well-defined from $[0,\kappa_0^2]$ to $[\Upsilon, \infty]$. {It follows from a direct calculation of derivatives of $\frac{1}{z^2-u^2}$ that the regularity of $\cH(\cdot)$ in $z$ depends on decay properties of $\mu(\frac12 u^2)$ as $|u|\to \Upsilon$ (recalling $|z|\ge \Upsilon >|u|$). Using the decay assumptions on $\mu(\cdot)$ in Section \ref{sec-mu}, it follows that $\cH \in C^{N_0-2}$, and so is $\cH^{-1}(\cdot)$.  } 

The existence of solutions $z_*(k)$ to \eqref{eqs-cHk} now follows straightforwardly. Indeed, for $|k|> \kappa_0$, there are no zeros of \eqref{eqs-cHk}, since $|k|^2$ is not in the range of $\cH(\cdot)$. On the other hand, for $0<|k| \le \kappa_0$, there is a radial function $z_*(k)\in [\Upsilon, \infty)$ in $|k|$ so that $z_*(k) = \cH^{-1}(|k|^2)$. Equivalently, there are zeros $\tau = \pm \tau_*(k)$ of the dispersion relation $D(i\tau,k)=0$, which is of the form 
\begin{equation}\label{form-taustar} 
\tau_*(k) =|k| \cH^{-1}(|k|^2),
\end{equation}
for any $k\not =0$ so that $|k| \le \kappa_0$. Since $\cH^{-1}(\cdot)$ is {$C^{N_0-2}$ smooth}, so is $\tau_*(k)$ in $|k|$. 
In addition, it follows from $|k|^2 = \cH(z_*(k))$ that $z_*'(k) = 2|k| / \cH'(z_*(k))$. Since $\cH'(z) <0$ for $z> \Upsilon$, $ z_*(k)$ is strictly decreasing, with $z_*(0) = \infty$ and $z_*(\kappa_0) = \Upsilon$. This proves that the phase velocity $\nu_*(k) = \tau_*(k)/|k| =z_*(k)$ is strictly decreasing in $|k|$.

It remains to study the dispersive property of $\tau_*(k)$. It turns out convenient to write 
\begin{equation}\label{def-omega}
\cH(z) = \frac{1}{z^2}\omega(\frac{1}{z^2}), \qquad \omega(y) := 2\pi\int_{\{|u|<\Upsilon\}} \frac{u^2\mu(\frac12 u^2)}{1- y u^2} \;du
\end{equation}
for $y \in [0,\Upsilon^{-2}]$. {As $\mu(\frac12u^2)$ decays rapidly in $u$, it is clear that $\omega(y)$ is a $C^{N_0-2}$ regular function, for $N_0$ being the decaying rate of $\mu(\cdot)$ as introduced in Section \ref{sec-mu}.} It then follows from the monotonicity of $\omega(y)$ that $\tau_0^2\le \omega(y)\le \kappa_0^2\Upsilon^2$, since by definition, $\omega(0) = \tau_0^2$ and $\omega(\Upsilon^{-2}) = \kappa_0^2\Upsilon^2$, where $\tau_0^2$ and $\kappa_0^2$ are defined as in \eqref{def-taukappa0}. 
Next, set $x_*(k) = \tau_*(k)^2$. By construction, $x_*(k)$ satisfies 
\begin{equation}\label{eqs-xstarE}
x_* = \omega(|k|^2/x_*) \end{equation}
which gives
\begin{equation}\label{x-bounds} \tau_0^2 \le x_*(k) \le \kappa_0^2\Upsilon^2.\end{equation}
Note that by definition, $ \kappa_0^2\Upsilon^2 = \tau_0^2 + \kappa_1^2$, giving \eqref{range-taus}. {In addition, it follows from the regularity of $\omega(y)$ and the identity \eqref{eqs-xstarE} that $x_*(k)$ is also a $C^{N_0-2}$ function.} Now, taking the derivative of the equation $x_* = \omega(|k|^2/x_*)$ and denoting $' = \frac{d}{d(|k|)}$, we get 
\begin{equation}\label{cal-D2x}
\begin{aligned}
\Big[1+  \frac{|k|^2}{x_*^2}  \omega'(\frac{|k|^2}{x_*})\Big] x_*'(|k|) &=  \frac{ 2|k|}{x_*} \omega'(\frac{|k|^2}{x_*}) ,
\\
\Big[1+  \frac{|k|^2}{x_*^2}  \omega'(\frac{|k|^2}{x_*}) \Big] x_*''(|k|) 
&=  \frac{2(x_* - |k| x'_*)^2}{x_*^3} \omega'(\frac{|k|^2}{x_*})  
+ \frac{|k|^2(2x_*- |k|x'_*)^2}{x_*^4} \omega''(\frac{|k|^2}{x_*})  ,
\end{aligned}
\end{equation}
noting that each term on the right-hand side is nonnegative, since $\omega'(y) \ge0$ and $\omega''(y)\ge 0$ (in fact, by a direct calculation, all derivatives of $\omega(y)$ are nonnegative {for $y \in [0,\Upsilon^{-2}]$}). In addition, since $y \omega'(y) \le \Upsilon^{-2}\omega'(\Upsilon^{-2})$, which is finite, we have 
\begin{equation}\label{bd-coff1}
C_0^{-1} \le 1 + \frac{|k|^2}{x_*^2}  \omega'(\frac{|k|^2}{x_*}) \le C_0,
\end{equation}
upon using \eqref{x-bounds}. This gives
\begin{equation}\label{Dx-bounds}
C_0^{-1} |k| \le x_*'(k) \le C_0 |k| , \qquad C_0^{-1}\le x_*''(k)  \le C_0 , \qquad \forall ~0\le |k|\le \kappa_0,\end{equation}
for some positive constant $C_0$. Next, we prove the convexity of $\tau_*(k)$. By definition, $\tau_*(k) = \sqrt{x_*(k)}$, we compute 
$$ \tau_*'(k) = \frac{x_*'(k)}{2\sqrt{x_*(k)}} , \qquad \tau_*''(k) = \frac{2 x''_*(k) x_*(k) - x_*'(k)^2}{4 x_*(k)^{3/2}}.$$ 
The estimates on $\tau_*'(k)$ {and the upper bound on $\tau''_*(k)$} follow at once from those on $x_*(k), x_*'(k)$, {and $x_*''(k)$}, see \eqref{x-bounds} and \eqref{Dx-bounds}. We shall  now prove that 
\begin{equation}\label{lowbd-tauk}
\tau_*''(k) \ge c_0, \qquad ~\forall~0\le |k|\le \kappa_0,
\end{equation}
for some positive constant $c_0$. In view of \eqref{x-bounds}, it suffices to obtain a lower bound for $2x_*''(k) x_*(k) - x_*'(k)^2 $. Using \eqref{cal-D2x}, we compute 
$$
\begin{aligned} 
\Big[ 1 + \frac{|k|^2}{x_*^2}  \omega'(|k|^2/x_*) \Big]x_*'(k)^2 &= \frac{ 2|k| x'_*}{x_*} \omega'(|k|^2/x_*) 
\\
2\Big[ 1 + \frac{|k|^2}{x_*^2}  \omega'(\frac{|k|^2}{x_*}) \Big]x_*''(k) x_*(k) 
&=  \frac{4(x_* - |k| x'_*)^2}{x_*^2} \omega'(\frac{|k|^2}{x_*})  
\\&\quad + \frac{2|k|^2(2x_*- |k|x'_*)^2}{x_*^3} \omega''(\frac{|k|^2}{x_*}) 
 \end{aligned}$$
and so 
$$
\begin{aligned}
\Big[ &1 + \frac{|k|^2}{x_*^2}  \omega'(\frac{|k|^2}{x_*}) \Big] \Big (2x_*''(k) x_*(k) - x_*'(k)^2 \Big) 
\\& = \frac{2(2x_* - |k| x'_*)}{x_*^2} 
 \Big[ (x_* - 2|k| x'_*) \omega'(\frac{|k|^2}{x_*})  
+ (2x_*- |k|x'_*)\frac{|k|^2}{x_*} \omega''(\frac{|k|^2}{x_*}) \Big] 
\end{aligned}$$
in which recalling \eqref{bd-coff1}, the factor $1 + \frac{|k|^2}{x_*^2}  \omega'(\frac{|k|^2}{x_*}) $ is harmless.  
Using again \eqref{cal-D2x}, we note that 
$$
\begin{aligned}
2 x_* - |k|x'_* = 2 x_* - 2|k|^2 \frac{ \frac{1}{ x_*} \omega'(|k|^2/x_*) }{1 + \frac{|k|^2}{x_*^2}  \omega'(|k|^2/x_*) } =  \frac{ 2 x_* }{1 + \frac{|k|^2}{x_*^2}  \omega'(|k|^2/x_*) } 
\\
x_* - 2 |k|x'_* = x_* - 4|k|^2 \frac{ \frac{1}{ x_*} \omega'(|k|^2/x_*) }{1 + \frac{|k|^2}{x_*^2}  \omega'(|k|^2/x_*) } =  \frac{  x_* - \frac{3|k|^2}{ x_*} \omega'(|k|^2/x_*) }{1 + \frac{|k|^2}{x_*^2}  \omega'(|k|^2/x_*) }
\end{aligned}
$$
which in particular yields that $2 x_* - |k|x'_* \gtrsim 1$ on $[0,\kappa_0]$, recalling  \eqref{x-bounds}. 
Therefore, 
$$
\begin{aligned}
\Big[ &1 + \frac{|k|^2}{x_*^2}  \omega'(\frac{|k|^2}{x_*}) \Big] \Big[ (x_* - 2|k| x'_*) \omega'(\frac{|k|^2}{x_*})  
+ (2x_*- |k|x'_*)\frac{|k|^2}{x_*} \omega''(\frac{|k|^2}{x_*}) \Big] 
\\
&= x_* \omega'(\frac{|k|^2}{x_*}) - \frac{3|k|^2}{x_*}[\omega'(\frac{|k|^2}{x_*})]^2 + 2|k|^2 \omega''(\frac{|k|^2}{x_*}) ,
\end{aligned}$$
and so $$
\begin{aligned}
\Big( &1 + \frac{|k|^2}{x_*^2}  \omega'(\frac{|k|^2}{x_*}) \Big)^2 \Big (2x_*''(k) x_*(k) - x_*'(k)^2 \Big) 
\\& = \frac{2(2x_* - |k| x'_*)}{x_*^2} 
 \Big[  x_* \omega'(\frac{|k|^2}{x_*}) - \frac{3|k|^2}{x_*}[\omega'(\frac{|k|^2}{x_*})]^2 + 2|k|^2 \omega''(\frac{|k|^2}{x_*}) \Big] .
\end{aligned}$$
Recalling \eqref{bd-coff1} and the fact that $2 x_* - |k|x'_* \gtrsim 1$, it suffices to study the terms in the bracket. Let $y_* = |k|^2/x_*$.  Recalling that $x_* = \omega(y_*)$, we consider 
$$A_* = \omega(y_*) \omega'(y_*) - 3y_* \omega'(y_*)^2 + 2  y_* \omega(y_*)\omega''(y_*) .$$
Recalling \eqref{def-omega}, we compute 
$$
\omega'(y) = 2\pi \int_{\{|u|<\Upsilon\}} \frac{u^4\mu(\frac12 u^2)}{(1- y u^2)^2}\;du 
, \qquad \omega''(y) = 4\pi \int_{\{|u|<\Upsilon\}} \frac{u^6\mu(\frac12 u^2)}{(1- y u^2)^3}\;du .
$$
By the H\"older's inequality, we have 
$$
\int_{\{|u|<\Upsilon\}} \frac{u^4\mu(\frac12 u^2)}{(1- y u^2)^2}\;du  \le \Big(  \int_{\{|u|<\Upsilon\}} \frac{u^2\mu(\frac12 u^2)}{1- y u^2}\;du \Big)^{1/2} \Big( \int_{\{|u|<\Upsilon\}} \frac{u^6\mu(\frac12 u^2)}{(1- y u^2)^3}\;du \Big)^{1/2}
$$
That is, $\omega'(y)^2 \le \frac12 \omega(y) \omega''(y)$. This yields 
$$
\begin{aligned}
A_* &= \omega(y_*) \omega'(y_*) - 3y_* \omega'(y_*)^2 + 2  y_* \omega(y_*)\omega''(y_*)
\\
&\ge  \omega(y_*) \omega'(y_*) +\frac12  y_* \omega(y_*)\omega''(y_*) 
\\
&\ge  \omega(0) \omega'(0) 
\end{aligned}$$
in which the last inequality was due to the monotonicity of $\omega(y)$ and $\omega'(y)$. Since $\omega(0)$ and $\omega'(0)$ are strictly positive, we have $A_* \gtrsim 1$, and hence $2x_*''(k) x_*(k) - x_*'(k)^2  \gtrsim 1$. The lower bound \eqref{lowbd-tauk} follows. 
This completes the proof of Theorem \ref{theo-LangmuirE}. 
\end{proof}

\subsection{Landau damping}

Theorem \ref{theo-LangmuirE} yields the existence of the solutions $\lambda_\pm(k) = \pm i \tau_*(k)$ to the dispersion relation  $D(\lambda,k) =0$ for $|k|\le \kappa_0$, while no pure imaginary solutions exist for $|k|>\kappa_0$. Note that by definition, $\kappa_0 =0$ when $\Upsilon =\infty$. In this section, we study how these curves $\lambda_\pm(k) $ leave the imaginary axis as $|k|>\kappa_0$, and in particular compute the Landau damping rate: necessarily $\Re \lambda_\pm(k) <0$ for $|k|>\kappa_0$ in view of Proposition \ref{prop-nogrowth}. 

Precisely, we obtain the following.

\begin{theorem}\label{theo-Landau}  Fix an $N_0\ge 4$, and let $\mu(\frac12|v|^2)$ be a non-negative equilibrium as described in Section \ref{sec-mu}, $\Upsilon$ be as in \eqref{def-Upsilon}, and {$\tau_j^2, \kappa_j^2$ as in \eqref{def-taukappa0}}. Then, for any $0<|k| - \kappa_0\ll1$, there are exactly two zeros $\lambda_\pm(k)$ of the electric dispersion relation $ D(\lambda,k) = 0$ that are {$C^{N_0-2}$ regular} in $|k|$, with $\lambda_\pm(\kappa_0) = \pm i \tau_*(\kappa_0)$, and satisfy the following Landau damping rates: 

\begin{itemize}

\item If $\Upsilon = \infty$, then for $|k|\ll1$, we have 
 \begin{equation}\label{Landau}
\Re \lambda_\pm(k) = -  \frac{\pi ^2}{\tau_0}  [u^3\mu(\frac12 u^2)]_{u = \nu_*(k)} 
 (1 + \cO(|k|)),
\end{equation}
where the phase velocity $\nu_*(k) = \frac{\tau_0 + \cO(|k|^2)}{|k|}$.  

%

\item If $\Upsilon <\infty$, then for $0< |k| - \kappa_0\ll1$, we have  
\begin{equation}\label{Landau-cpt}
\Re \lambda_\pm(k) =-\frac{2\pi^2}{\kappa_1^2} [u\mu(\frac{1}{2}u^2)]_{u = \nu_*(k)} (1 + \cO(|k| - \kappa_0)),
\end{equation}
where  the phase velocity $\nu_*(k) = \Upsilon - \frac{2\kappa_0}{\kappa_1^2} (|k| - \kappa_0) + \cO((|k| - \kappa_0)^2)$.


%
%

\end{itemize}

\end{theorem}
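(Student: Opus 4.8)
\emph{Proof strategy.} The plan is to follow the two Langmuir branches $\lambda_\pm(k)=\pm i\tau_*(k)$ of Theorem~\ref{theo-LangmuirE} past the survival threshold by reducing $D(\lambda,k)=0$ to the scalar equation $\cH_-(z)=|k|^2$, with $z=i\lambda/|k|$, where $\cH_-$ denotes the analytic continuation of $\cH$ from the half-plane $\{\Im z>0\}$ across the segment $\{|\Re z|<\Upsilon\}$ on which $\cH$ has its cut. Using the evenness $\cH(-z)=\cH(z)$ and the reflection $\cH(\bar z)=\overline{\cH(z)}$ (both immediate from \eqref{def-Dlambda1}), it suffices to track the branch near $z=\Upsilon$ when $\Upsilon<\infty$ — and near $z=+\infty$ when $\Upsilon=\infty$ — and to obtain the other branch by complex conjugation. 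Since $\cH$ is a decreasing bijection of $[\Upsilon,\infty]$ onto $[0,\kappa_0^2]$ (Theorem~\ref{theo-LangmuirE}), the value $|k|^2$ leaves the real range of $\cH$ on $[\Upsilon,\infty]$ once $|k|>\kappa_0$, so the root $z$ must acquire a nonzero imaginary part; because $\Re\lambda=|k|\,\Im z$, a root with $\Im z<0$ is precisely a Landau-damped mode (and none can have $\Im z>0$, by Proposition~\ref{prop-nogrowth}).

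\emph{Step 1: the continued symbol.} Deforming the $u$-integration in $\cH(z)=2\pi\int_\RR\frac{u\mu(\frac12u^2)}{z-u}\,du$ below the pole $u=z$ as $z$ descends through the support, I would record that, for $\Im z<0$ and $\Re z$ near $(-\Upsilon,\Upsilon)$,
\[
\cH_-(z)=\cH(z)-4\pi^2 i\,z\mu(\tfrac12 z^2),
\]
where $z\mu(\tfrac12 z^2)$ is read via a smooth extension of $u\mapsto u\mu(\tfrac12u^2)$ off $\RR$; equivalently, on the real axis $\cH_-(x)=\cH(x+i0)=2\pi\,\mathrm{P.V.}\!\int\frac{u\mu(\frac12u^2)}{x-u}\,du-2\pi^2 i\,x\mu(\tfrac12 x^2)$, in accordance with \eqref{Plemelj}. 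The properties needed are: $\cH_-\equiv\cH$ on $\{\Re z>\Upsilon\}$ (the pole is never crossed there), so the branch stays purely oscillatory for $|k|<\kappa_0$, consistently with Theorem~\ref{theo-LangmuirE}; the two determinations match to order $N_1$ at $z=\Upsilon$, since $z\mu(\tfrac12 z^2)$ vanishes to that order there; $\cH_-$ is $C^m$ on a fixed neighborhood of $z=\Upsilon$ for some $m\ge3$ fixed by $N_0,N_1$; and
\[
\cH'(\Upsilon)=-2\pi\!\int_{\{|u|<\Upsilon\}}\!\frac{u\mu(\frac12u^2)}{(\Upsilon-u)^2}\,du=-\kappa_1^2\neq0,
\]
obtained by writing $\frac{\cH(z)-\cH(\Upsilon)}{z-\Upsilon}=-2\pi\int\frac{u\mu(\frac12u^2)}{(z-u)(\Upsilon-u)}\,du$ and letting $z\to\Upsilon^+$, which is legitimate because $N_1\ge4$. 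When $\Upsilon=\infty$ I would instead use the large-$z$ expansion $\cH(z)=\tau_0^2 z^{-2}+\tau_1^2 z^{-4}+\cO(z^{-6})$, hence $\cH'(z)\sim-2\tau_0^2 z^{-3}$, which uses the decay \eqref{decay-mu}.

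\emph{Step 2: the perturbation.} I would then apply the implicit function theorem to $\Phi(z,|k|):=\cH_-(z)-|k|^2=0$ near $(z,|k|)=(\Upsilon,\kappa_0)$ (near $w=1/z=0$ when $\Upsilon=\infty$): since $\partial_z\Phi(\Upsilon,\kappa_0)=\cH'(\Upsilon)=-\kappa_1^2\neq0$, there is a unique $C^m$ branch $z=Z(|k|)$ with $Z(\kappa_0)=\Upsilon$, and we set $\lambda_-(k):=-i|k|Z(|k|)$, $\lambda_+(k):=\overline{\lambda_-(k)}$. These are the only two regular zeros of $D$ through $\pm i\tau_*(\kappa_0)$: local uniqueness near $z=\pm\Upsilon$ is the implicit function theorem, while for $z$ with $|\Im z|$ small and $\Re z$ bounded away from $\pm\Upsilon$ one checks as in Section~\ref{sec-emlambda} that $\Phi\neq0$ ($\cH_-=\cH<\kappa_0^2<|k|^2$ for $\Re z>\Upsilon$, and $|\Im\cH_-|\gtrsim\mu(\tfrac12(\Re z)^2)$ is bounded below on compact subsets of $(-\Upsilon,\Upsilon)$). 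Writing $Z=z_0+i\sigma_1$ and splitting $\Phi=0$ into real and imaginary parts: all corrections coming from $\sigma_1$ are $\cO(\mu(\tfrac12 z_0^2))=\cO((\Upsilon-z_0)^{N_1})$, so the real equation reduces to inverting the function $\cH$ (real-analytically continued past $\Upsilon$), giving $z_0=\nu_*(k)$, and Taylor expansion at $\Upsilon$ using $\cH'(\Upsilon)=-\kappa_1^2$ yields $\nu_*(k)=\Upsilon-\frac{2\kappa_0}{\kappa_1^2}(|k|-\kappa_0)+\cO((|k|-\kappa_0)^2)$ — respectively $\nu_*(k)=\frac{\tau_0+\cO(|k|^2)}{|k|}$ when $\Upsilon=\infty$. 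The imaginary equation then reads $\sigma_1\,\cH_-'(z_0)=2\pi^2 z_0\mu(\tfrac12 z_0^2)+(\text{higher order})$ with $\cH_-'(z_0)=-\kappa_1^2+o(1)$ (resp. $\cH_-'(z_0)=\cH'(z_0)\sim-2\tau_0^2z_0^{-3}$), so $\sigma_1<0$ and hence $\Re\lambda_\pm(k)=|k|\,\sigma_1<0$; substituting the expansions of $z_0=\nu_*(k)$ and of $\cH_-'(z_0)$ (and, when $\Upsilon=\infty$, the relation $|k|\,\nu_*(k)=\tau_0+\cO(|k|^2)$) then produces, after a direct computation, the Landau laws \eqref{Landau} and \eqref{Landau-cpt}. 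Finally, $\lambda_\pm$ is regular across $|k|=\kappa_0$ because $\sigma_1=\cO((|k|-\kappa_0)^{N_1})$, so $\lambda_\pm(k)$ joins the purely imaginary branch $\pm i\tau_*(k)$ of Theorem~\ref{theo-LangmuirE} to order $N_1-1\ge3$.

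\emph{Main obstacle.} The crux is the degeneracy at the survival point together with the finite smoothness of $\mu$: because $\Upsilon$ is the edge of the support, the Plemelj/residue term $z\mu(\tfrac12 z^2)$ vanishes there to order $N_1$, so the damped branch detaches from the imaginary axis extremely flatly, and one must expand $\cH_-$ simultaneously in $z-\Upsilon$ and $|k|-\kappa_0$, carefully separating the corrections that feed into the real equation from those that feed into the imaginary one, in order to extract both the leading rate and the relative error $\cO(|k|-\kappa_0)$. Moreover, since $\mu$ is merely $C^{N_0}$, the function $\cH_-$ is genuinely non-analytic and the continuation formula above must be handled as a finitely-smooth extension; the hypotheses $N_0,N_1\ge4$ are precisely what is needed to guarantee that $\cH'(\Upsilon)$ is finite and nonzero, that $\cH_-$ is smooth enough for the implicit function theorem, and that all error terms are of the stated order (for $\Upsilon=\infty$ the analogous requirement is enough orders of the large-$z$ expansion of $\cH$, which again uses \eqref{decay-mu}).
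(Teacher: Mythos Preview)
Your approach is essentially the same as the paper's, just organized around the continued symbol $\cH_-$ rather than around explicit Taylor expansions. The paper expands $\cH(z)$ in powers of $(\Upsilon-z)$ (respectively $1/z$ when $\Upsilon=\infty$) via the geometric series, obtaining
\[
|k|^2=\kappa_0^2+\kappa_1^2(\Upsilon-z)+\dots+\cR^H(z)(\Upsilon-z)^{m+1},
\]
applies the implicit function theorem to this, proves $\Im z_-(k)=\cO((|k|-\kappa_0)^{N_1-2})$ by checking inductively (via Fa\`a di Bruno) that all $k$-derivatives of $z_-$ at $\kappa_0$ are real, and finally isolates the damping rate by taking the imaginary part of the dispersion relation and evaluating the resulting integral as a Poisson-kernel approximation to a Dirac mass at $u=\nu_*(k)$. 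Your route replaces the explicit series by the jump formula $\cH_-(x)=\cH(x+i0)=2\pi\,\mathrm{P.V.}\!\int\frac{u\mu}{x-u}\,du-2\pi^2 i\,x\mu(\tfrac12 x^2)$, so that the imaginary part is read off immediately rather than recovered through the Poisson-kernel limit; the real-part analysis and the derivative $\cH'(\Upsilon)=-\kappa_1^2$ are identical in both.

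The one place where the paper's formulation buys something concrete is the handling of finite smoothness: by writing the remainder as an explicit integral $\cR^H(z)=2\pi\int\frac{u\mu(\frac12u^2)}{(\Upsilon-u)^{m+1}(z-u)}\,du$, the paper never needs to evaluate $\mu$ off the real axis, whereas your formula $\cH_-(z)=\cH(z)-4\pi^2 i\,z\mu(\tfrac12 z^2)$ for $\Im z<0$ requires a choice of smooth extension of $u\mu(\tfrac12 u^2)$ to complex arguments. You note this correctly and it is harmless here (any two such extensions agree to order $N_1$ at $\Upsilon$, which is all the IFT and the error bounds need), but it is worth being aware that the paper's more pedestrian expansion sidesteps this entirely.
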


\begin{proof}
To proceed, recalling from \eqref{def-Dlambda1}, we study the dispersion relation $|k|^2 = \cH(i\lambda/|k|)$, which we recall 
\begin{equation}\label{reexp-cH}
\begin{aligned}
\cH(z) 
&= 2\pi \int_{\RR} \frac{u \mu(\frac{1}{2}u^2)}{z-u} \;du 
\end{aligned}\end{equation}
which is analytic in $\Im z> 0$ and sufficiently regular up to the real axis $\Im z =0$, see \eqref{Plemelj}. Observe that the function $\cH(z)$ may not have an analytic continuation past the real axis, since $\mu(\cdot)$ may not be analytic. {However, in the case when $\Upsilon<\infty$, it is classical \cite{Whi} that $\cH(z)$ can be extended holomorphically to $\CC\setminus [-\Upsilon, \Upsilon]$ as an analytic extension from $\cH(z)$ on the upper half plane $\{\Im z>0\}$, since the function $\mu(\frac12 u^2)$ has support contained in $\{|u|\le \Upsilon\}$ and vanishes rapidly at the maximal speed $|u|=\Upsilon$.} On the other hand, when viewing as a function in $\RR^2$ and using the uniform bounds \eqref{unibd-cH} in $C^{N_0-2}$, we may also apply the classical Whitney's extension theorem \cite{Whi} to extend $\cH(z)$ to small neighborhoods of $[-\Upsilon, \Upsilon]$, as a {$C^{N_0-2}$} smooth function in $z$. 
{Note however that the two extensions need not to be identical. Indeed, in view of the Plemelj's formula, see \eqref{Plemelj}, we compute the limiting value of $\cH(z)$ from the lower half plane $\{\Im z<0\}$, namely   
\begin{equation}\label{Plemelj1}
 \lim_{\tilde\gamma \to 0^-}\cH(i\tilde\gamma - \tilde\tau) 
=  -2\pi P.V. \int_{\{|u|<\Upsilon\}} \frac{u \mu(\frac{1}{2}u^2)}{u + \tilde\tau} du  {~-~} 2 i \pi^2 \tilde\tau \mu(\frac{1}{2}\tilde\tau^2) ,
\end{equation}
which is different from those coming from the upper half plane $\{\Im z>0\}$ due to the last term (noting the sign change), and therefore the analytic extension of $\cH(z)$ is not identical to the Whitney's $C^{N_0-2}$ extension, since $4 i \pi^2 \tilde\tau \mu(\frac{1}{2}\tilde\tau^2) \not =0$ for $|\tilde \tau| <\Upsilon$. In what follows, we focus the extension of $\cH(i\lambda/|k|)$ in the neighborhood of $\lambda_\pm(\kappa_0) = \pm i \tau_*(\kappa_0)$, with $\tau_*(\kappa_0) = \kappa_0\Upsilon$, or precisely the extension of $\cH(z)$ in the neighborhood of $z = \pm \Upsilon$, at which the last term in \eqref{Plemelj1} vanishes. As a result, the difference of the two extensions is negligible, since $\mu(\frac12 u^2)$ vanishes rapidly as $|u|\to \Upsilon$.} We shall now establish the existence and behavior of the zeros $\lambda_\pm(k)$ of the dispersion relation $|k|^2 = \cH(i\lambda/|k|)$ for $|k|$ sufficiently close to $\kappa_0$.

\subsubsection*{Case 1: $\Upsilon=\infty$}

Let us start with the case when $\Upsilon = \infty$. Note that in the case, we have $\kappa_0 =0$, $\tau_*(0) = \tau_0$, and so we study the dispersion relation for $|k|\ll1$. Using the geometric series of $\frac{1}{1-y}$, we write  
$$ \frac{1}{z-u} = \frac{1}{z}\frac{1}{1-u/z} = \frac{1}{z}\sum_{j=0}^{2m} \frac{u^j}{z^j} + \frac{u^{2m+1}}{z^{2m+1} (z-u)}$$
for any $m\ge 0$. Putting this into \eqref{reexp-cH} and using the fact that $\mu(\frac12 u^2)$ is even in $u$, we get 
\begin{equation}\label{exp-cH}
\begin{aligned}
\cH(z) 
&= 
2\pi\sum_{j=0}^{m-1}\frac{1}{z^{2j+2}}\int_{\RR} u^{2j+2} \mu(\frac{1}{2}u^2) \;du + \frac{2\pi}{z^{2m+1}}\int_{\RR} \frac{u^{2m+2} \mu(\frac{1}{2}u^2)}{z-u} \;du,
\end{aligned}\end{equation}
for $0\le m \le N_0-2$, where $N_0$ is defined as in \eqref{decay-mu}. 
Set
\begin{equation}\label{def-tauj}\tau_j^2 =2\pi \int_{\RR} u^{2j+2} \mu(\frac{1}{2}u^2) \;du, \qquad \cH^R(z) = - \frac{2\pi}{z^{2m-1}}\int_{\RR} \frac{u^{2m+2} \mu(\frac{1}{2}u^2)}{z-u} \;du.
\end{equation}
The dispersion relation $|k|^2 = \cH(i\lambda/|k|)$ then becomes
\begin{equation}\label{disp1}
\lambda^2 = -  \sum_{j=0}^{m-1} (-1)^j\frac{|k|^{2j}}{\lambda^{2j}} \tau_j^2+ \cH^R(i\lambda/|k|),
 \end{equation}
 where 
$$\cH^R(i\lambda/|k|) =- \frac{2i\pi  (-1)^{m}|k|^{2m-1}}{\lambda^{2m-1}}\int_{\RR} \frac{u^{2m+2} \mu(\frac{1}{2}u^2)}{i\lambda/|k| -u} \;du.$$
Following \eqref{bd-cH1}, we have $|\cH^R(i\lambda/|k|)| \lesssim |k|^{2m-1} |\lambda|^{-2m+1}$. The existence of $\lambda_\pm(k)$ that satisfies the dispersion relation \eqref{disp1} follows from the implicit function theorem for $|k|\ll1$. { Indeed, at $\lambda = \pm i\tau_0$ and $k=0$, the $\lambda$-derivative of the left hand side of \eqref{disp1} is equal to $2\lambda = \pm 2i \tau_0 \not =0$, while the $\lambda$-derivative of the right hand side is clearly bounded by $C_0|k|^2$, since $\lambda = \pm i\tau_0 \not =0$. The implicit function theorem applies to the equation \eqref{disp1} yields the existence of $\lambda_\pm(k)$ for $|k|\ll1$.} Note that for $\lambda = i\tau$, the summation 
$$\sum_{j=0}^{m-1} (-1)^j\frac{|k|^{2j}}{\lambda^{2j}} \tau_j^2 = \sum_{j=0}^{m-1} \frac{|k|^{2j}}{\tau^{2j}} \tau_j^2$$
gives a real and strictly positive value of order $\tau_0^2 + \cO(|k|^2)$ for every real value $\tau$. {Recalling $\lambda = i\tau$,} this yields ${\lambda_\pm(k) = \pm i (\tau_0 + \cO(|k|^2))}$ for $|k|\ll1$. By induction, 
we in fact obtain 
\begin{equation}\label{exp-lambdapm} 
\lambda_\pm(k) = \pm i \sum_{j=0}^{m-1} a_j |k|^2 + \cO(|k|^{2m-1})
\end{equation}
for $|k|\ll1$, where $a_j$ are some nonnegative coefficients and can be computed in terms of $\tau_j$. For instance, $a_0 = \tau_0$ and $a_1 = \frac{\tau_1^2}{2\tau_0^3}$. In particular, taking $m = N_0-2$, we have 
\begin{equation}\label{upper-Relambda}
|\Re \lambda_\pm(k)| \lesssim |k|^{2N_0-5}
\end{equation} 
for $|k|\ll1$. Let us study further the real part of the dispersion relation $\lambda_\pm(k)$. From \eqref{disp1}, with $m=1$, the curves $\lambda_\pm(k)$ solve  
$$\lambda^2 = - \tau_0^2 - \frac{2i\pi|k|}{\lambda} \int_{\RR} \frac{u^{4} \mu(\frac{1}{2}u^2)}{i\lambda/|k| -u} \;du$$
where we have dropped $\pm$ for sake of presentation. Write $\lambda = \gamma + i\tau$. Note that $\gamma < 0$, since no solution exists for $\gamma\ge 0$ as shown in Proposition \ref{prop-nogrowth} and Theorem \ref{theo-LangmuirE}. Taking the imaginary part of the above, we get  
$$ 2 \gamma \tau =  - \frac{2\pi|k| \gamma}{|\lambda|^2} \Re \int_{\RR} \frac{u^{4} \mu(\frac{1}{2}u^2)}{i\lambda/|k| -u} \;du - \frac{2\pi|k| \tau}{|\lambda|^2} \Im \int_{\RR} \frac{u^{4} \mu(\frac{1}{2}u^2)}{i\lambda/|k| -u} \;du$$
which yields 
$$ \Big[ 2 \tau + {\frac{2\pi|k|}{|\lambda|^2}} \Re \int_{\RR} \frac{u^{4} \mu(\frac{1}{2}u^2)}{i\lambda/|k| -u} \;du\Big] \gamma = - \frac{2\pi|k| \tau}{|\lambda|^2} \Im \int_{\RR} \frac{u^{4} \mu(\frac{1}{2}u^2)}{i\lambda/|k|-u} \;du.$$
Following \eqref{bd-cH1}, the integral term on the left hand side is bounded, while $\tau = \pm (\tau_0 + \cO(|k|^2))$ and $\gamma = \cO(|k|^{2N_0-5})$. This yields 
\begin{equation}\label{est-gamma0}
 \Big[  \tau_0 + \cO(|k|)\Big] \gamma = \frac{\pi|k|}{\tau_0} \Im \int_{\RR} \frac{u^{4} \mu(\frac{1}{2}u^2)}{u+ \tau/|k| - i\gamma/|k|} \;du .
\end{equation}
Since $\gamma = \cO(|k|^{2N_0-5})$ for some large $N_0$ as in \eqref{decay-mu}, $\gamma/|k|$ tends to $0$ as $|k|\to 0$. We thus write 
$$
\begin{aligned}
\Im \int_{\RR} \frac{u^{4} \mu(\frac{1}{2}u^2)}{u+ \tau/|k| - i\gamma/|k|} \;du  &= \frac{\gamma}{|k|}\int_{\RR} \frac{u^{4} \mu(\frac{1}{2}u^2)}{(u+ \tau/|k|)^2 + \gamma^2/|k|^2} \;du.
\end{aligned}$$
In the region where $|u|\le \frac{\tau_0}{2|k|}$, we note that $|u+ \tau/|k||\ge \frac{\tau_0}{2|k|}$ for sufficiently small $|k|$. Therefore, 
$$ \frac{|\gamma|}{|k|}\int_{\{ |u|\le \frac{\tau_0}{2|k|}\}} \frac{u^{4} \mu(\frac{1}{2}u^2)}{(u+ \tau/|k|)^2 + \gamma^2/|k|^2} \;du\le \frac{4 |\gamma| |k|}{\tau_0^2} \int u^4\mu(\frac{1}{2}u^2)\;du \le |\gamma| \cO(|k|),
$$
which can be put on the left hand side of \eqref{est-gamma0}. As for the integral over $|u|\ge \frac{\tau_0}{2|k|}$, we write 
$$
\begin{aligned}
 \frac{\gamma}{|k|}&\int_{\{ |u|\ge \frac{\tau_0}{2|k|}\}} \frac{u^{4} \mu(\frac{1}{2}u^2)}{(u+ \tau/|k|)^2 + \gamma^2/|k|^2} \;du
 \\&=\frac{\tau^4}{|k|^4} \mu\Big(\frac{\tau^2}{2|k|^2}\Big)\int_{\{ |u|\ge \frac{\tau_0}{2|k|}\}} \frac{\gamma/|k|}{(u+ \tau/|k|)^2 + \gamma^2/|k|^2} \;du 
\\&\quad + 
\int_{\{ |u|\ge \frac{\tau_0}{2|k|}\}} \frac{\gamma/|k|}{(u+ \tau/|k|)^2 + \gamma^2/|k|^2} \Big[ u^{4} \mu(\frac{1}{2}u^2) - \frac{\tau^4}{|k|^4} \mu\Big(\frac{\tau^2}{2|k|^2}\Big)\Big]\;du
\end{aligned}$$
in which the last integral is clearly bounded by $|\gamma|\cO(|k|)$, since $\mu(\cdot)$ and its derivatives decay rapidly to zero, and so it can be put on the left hand side of  \eqref{est-gamma0}. On the other hand, recalling $\gamma<0$ and considering the case when $\tau<0$, we compute 
$$
\int_{\{ |u|\ge  \frac{\tau_0}{2|k|}\}} \frac{\gamma/|k|}{(u+ \tau/|k|)^2 + \gamma^2/|k|^2} \;du = - \frac{\pi}{2} + \mathrm{arctan}(\frac{\tau_0+2\tau}{2|k|}) = - \pi + \cO(e^{-\tau_0/(2|k|)}).
$$
The case when $\tau>0$ is done similarly. Putting these into \eqref{est-gamma0}, we thus obtain 
$$
 \Big[  \tau_0 + \cO(|k|)\Big] \gamma =-  \frac{\pi ^2|k|}{\tau_0}  \frac{\tau^4}{|k|^4} \mu\Big(\frac{\tau^2}{2|k|^2}\Big) (1 + \cO(e^{-\tau_0/(2|k|)})).
$$
This proves \eqref{Landau}, upon recalling  $\tau = \pm (\tau_0 + \cO(|k|^2))$ and $\nu_* = \tau_*/|k|$.

\subsubsection*{Case 2: $\Upsilon<\infty$}

We now study the case when $\Upsilon < \infty$. In this case we recall that $\kappa_0>0$ and $\tau_*(\kappa_0) = \kappa_0 \Upsilon$. We shall study the dispersion relation $D(\lambda,k)=0$ for $|k| \to \kappa_0$ and $\lambda \to \pm i \tau_*(\kappa_0)$. We focus on the case when $\lambda \sim - i\tau_*(\kappa_0)$; the $+$ case is similar. 
As in the previous case, we use the geometric series of $\frac{1}{1-y}$ to write 
$$ \frac{1}{z-u} = \frac{1}{\Upsilon-u}\frac{1}{1-\frac{\Upsilon-z}{\Upsilon-u}} = \frac{1}{\Upsilon-u}\sum_{j=0}^{m} \frac{(\Upsilon-z)^j}{(\Upsilon-u)^j} + \frac{(\Upsilon-z)^{m+1}}{(\Upsilon-u)^{m+1} (z-u)}.$$
 Therefore, we get from \eqref{reexp-cH} that 
\begin{equation}\label{exp-cHcpt}
\begin{aligned}
\cH(z) 
&= 
2\pi\sum_{j=0}^{m}(\Upsilon-z)^j \int_{\RR} \frac{u\mu(\frac{1}{2}u^2)}{(\Upsilon-u)^{j+1}} \;du + 2\pi(\Upsilon-z)^{m+1}\int_{\RR} \frac{u \mu(\frac{1}{2}u^2)}{(\Upsilon-u)^{m+1} (z-u)} \;du,
\end{aligned}\end{equation}
for $0\le m \le N_0-2$, where $N_0$ is defined as in \eqref{decay-mu}. 
Set
$$\kappa_j^2 =2\pi \int_{\RR} \frac{u\mu(\frac{1}{2}u^2)}{(\Upsilon-u)^{j+1}} \;du = 2\pi \int_{\RR} \frac{u(\Upsilon+u)^{j+1}\mu(\frac{1}{2}u^2)}{(\Upsilon^2-u^2)^{j+1}} \;du,$$
which are strictly positive numbers (upon using the radial symmetry of $\mu(\cdot)$). 
The dispersion relation $|k|^2 = \cH(z)$ then becomes
\begin{equation}\label{disp1-cpt}
|k|^2 =   \kappa^2_0+ \sum_{j=1}^{m} \kappa_j^2 (\Upsilon-z)^j + \cR^H(z)(\Upsilon-z)^{m+1},
 \end{equation}
 where 
$$\cR^H(z) = 2\pi\int_{\RR} \frac{u \mu(\frac{1}{2}u^2)}{(\Upsilon-u)^{m+1} (z-u)} \;du.$$
Following \eqref{bd-cH1}, we have $|\cR^H(z)| \lesssim 1$. The existence of $z_-(k)$, or $\lambda_- = -iz_-(k)|k|$, that satisfies the dispersion relation \eqref{disp1-cpt} follows from the implicit function theorem for $0<|k|-\kappa_0\ll1$, noting that the coefficients $\kappa_j$ are strictly positive {and $z$-derivative of the right hand side of \eqref{disp1-cpt} is equal to $-\kappa_1 + \mathcal{O}(|k|-\kappa_0)$, which is non zero for $|k|-\kappa_0\ll1$, since $\kappa_1\not =0$}. 
Since there are no pure imaginary solutions $\lambda_-(k)$ for $|k|>\kappa_0$, the solution $z_-(k)$ must be complex and $\Im z_-(k) >0$ for $|k|>\kappa_0$. In addition, we claim that 
\begin{equation}\label{vanishingImz}  
\begin{aligned}
|\Re z_-(k) - \Upsilon + \frac{2\kappa_0}{\kappa_1^2} (|k| - \kappa_0)| &\lesssim (|k| - \kappa_0)^2,
\\
 |\Im z_-(k)|&\lesssim (|k|-\kappa_0)^{N_0-2}
\end{aligned}\end{equation}
for $|k|$ sufficiently close to $\kappa_0^+$, where $N_0$ is given as in \eqref{decay-mu}. The estimate on $\Re z_-$ is direct from \eqref{disp1-cpt}, recalling $\kappa_1^2$ is strictly positive. On the other hand, note that $z_-(\kappa_0) = \Upsilon$ and in view of \eqref{disp1-cpt}, $\partial_z^n\cH(z)$ are all real numbers at $z = \Upsilon$ for $0\le n\le m$. Hence, taking the derivative in $|k|$ of  the equation $\cH(z_-(k)) = |k|^2$ and evaluating the result at $|k|=\kappa_0$, we get 
$$ \cH'(\Upsilon) z_-'(\kappa_0) = 2 \kappa_0,$$ 
which yields that $z_-'(\kappa_0) $ is real-valued and is qual to $-2\kappa_0/\kappa_1^2$, since $\cH'(\Upsilon) =- \kappa_1^2$ is real valued. Thus, by induction, together with a use of the Fa\`a di Bruno's formula for derivatives of a composite function, $\partial_k^\alpha z_-(\kappa_0)$ are real valued for $0\le |\alpha| \le m$. This proves the claim \eqref{vanishingImz} for $|k|$ sufficiently close to $\kappa_0$ via the standard Taylor's expansion series.

Let us study further the real part of the dispersion relation $\lambda_-(k)=-iz_-(k) |k|$. From \eqref{disp1-cpt}, with $m=1$, the curve $z_-(k)$ solves  
$$
|k|^2 =   \kappa^2_0 + \kappa_1^2 (\Upsilon-z)+ 2\pi(\Upsilon-z)^2 \int_{\RR} \frac{u \mu(\frac{1}{2}u^2)}{(\Upsilon-u)^2(z-u)} \;du. 
$$
Writing $z = i\gamma + \tau$, with $\gamma<0$, and taking the imaginary part of the above identity, we get 
$$\Big(\kappa_1^2 + 4 \pi (\Upsilon - \tau) \Re \int_{\RR} \frac{u \mu(\frac{1}{2}u^2)}{(\Upsilon-u)^2(z-u)} \;du\Big) \gamma = 2\pi [(\Upsilon - \tau)^2 - \gamma^2]\Im \int_{\RR} \frac{u \mu(\frac{1}{2}u^2)}{(\Upsilon-u)^2(z-u)} \;du.$$
Note that for $|k|\to \kappa^+_0$, $\tau \to \Upsilon$, while $\gamma = \cO((|k|-\kappa_0)^{N_0-2})$, see \eqref{vanishingImz}. The above thus yields 
\begin{equation}\label{est-gamma0-cpt}
 \Big[ \kappa_1^2 + \cO(|k| - \kappa_0)\Big] \gamma = 2\pi (\Upsilon - \tau)^2\Im\int_{\RR} \frac{u \mu(\frac{1}{2}u^2)}{(\Upsilon-u)^2(z-u)} \;du .
\end{equation}
It remains to study the integral on the right. Indeed, for $z = i\gamma + \tau$, we have 
$$
\begin{aligned}
\Im\int_{\RR} \frac{u \mu(\frac{1}{2}u^2)}{(\Upsilon-u)^2(z-u)} \;du = -  \int_{\RR} \frac{ \gamma u \mu(\frac{1}{2}u^2)}{(\Upsilon-u)^2 (\gamma^2 + (u-\tau)^2)} \;du.
\end{aligned}$$
Hence, exactly as done in the previous case, since $\gamma \lesssim (|k|-\kappa_0)^{N_0-2}\to 0$ as $|k|\to \kappa_0$, the kernel $\frac{\gamma}{\gamma^2 + (u-\tau)^2}$ is approximated the Dirac delta function at $u = \tau$, yielding 
$$
 \Big[ \kappa_1^2 + \cO(|k| - \kappa_0)\Big] \gamma = -2\pi^2 \tau\mu(\frac{1}{2}\tau^2).
$$
Recalling $\tau = \Re z_-(k)$, $\gamma = \Im z_-(k)$, and using again \eqref{vanishingImz}, we obtain \eqref{Landau-cpt} for $\lambda_- = -iz_-(k)|k|$. The bounds for $\lambda_+(k)$ follow similarly. 
This completes the proof of Theorem \ref{theo-Landau}. 
\end{proof}


\section{Green function}\label{sec-Green}


In view of the resolvent equation \eqref{resolvent} for the electric potential $\phi$, we introduce the resolvent kernel
\begin{equation}\label{def-THk}
\TG_{k}(\lambda) :=  \frac{1}{D(\lambda,k)} ,
\end{equation}
and the corresponding temporal Green function
\begin{equation}\label{def-FHk}
\begin{aligned}
\FG_{k}(t) &=  \frac{1}{2\pi i}\int_{\{\Re \lambda = \gamma_0\}}e^{\lambda t} \TG_{k}(\lambda)\; d\lambda,
\end{aligned}\end{equation}
which are well-defined for $\gamma_0 >0$, recalling from Lemma \ref{lem-Dlambda} that $D(\lambda,k)$ is holomorphic in $\Re \lambda>0$. The main goal of the remainder of this section is to establish decay estimates for the Green function through the representation \eqref{def-FHk}. We stress that since $\mu(v)$ may not be analytic in $v$, the resolvent kernels $\TG_k(\lambda)$ may not have an analytic extension to the stable half plane in $\Re \lambda <0$. As a consequence, isolating the poles to compute the residue of $\TG_k(\lambda)$ and deriving decay estimates for the remainder turn out to be rather delicate (c.f. \cite{BMM-lin,HKNF3}).

\subsection{Green function in Fourier space}

We first study the electric Green function $\FG_k(t)$. We obtain the following. 

\begin{proposition}\label{prop-GreenG} Let $\FG_k(t)$ be defined as in \eqref{def-FHk}, and let ${\lambda_\pm(k)}$
be the electric dispersion relation constructed in Theorems \ref{theo-LangmuirE} and \ref{theo-Landau}. Then, we can write 
\begin{equation}\label{decomp-FG} 
\FG_k(t) = \delta(t) + \sum_\pm \FG^{osc}_{k,\pm}(t)  +   \FG^{r}_k(t) ,
\end{equation}
with 
$$
\FG^{osc}_{k,\pm}(t) = e^{\lambda_\pm(k)t} a_\pm(k),
$$
for some sufficiently smooth functions $a_\pm(k)$ whose support is contained in $\{|k|\le \kappa_0 + 1\}$, with $a_\pm(0) =  \pm { i \tau_0 \over 2} $. In addition, there hold 
\begin{equation}\label{bd-Gr}
| |k|^{|\alpha|}\partial_k^\alpha \FG^{r}_k(t)| \le C_0 |k|^3 \langle k\rangle^{-4} \langle kt\rangle^{-K_0+|\alpha|} ,
\end{equation}
{uniformly in $k\in \RR^3$ for some universal constant $C_0$ and for $0\le |\alpha| <  K_0$.  
}

\end{proposition}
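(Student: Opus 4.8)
The plan is to start from the Bromwich integral \eqref{def-FHk} and deform the contour $\{\Re\lambda=\gamma_0\}$ to the left, picking up the poles of $\TG_k(\lambda)=1/D(\lambda,k)$ along the way. The first issue is that $1/D(\lambda,k)\to 1$ as $\Re\lambda\to+\infty$, so $\TG_k(\lambda)$ is not integrable on vertical lines; the fix is to write $\TG_k = 1 + (\TG_k-1) = 1 + \frac{1}{|k|^2}\cH(i\lambda/|k|)\TG_k$, whose inverse Laplace transform of the constant $1$ produces the $\delta(t)$ term, while the remainder decays like $|\lambda|^{-2}$ at infinity (using \eqref{unibd-cH}) and is genuinely integrable. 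For $|k|$ large, or more precisely for $|k|\ge\kappa_0+\delta$, the strong Penrose bound \eqref{sPenrose-kappa0} says $|D|\ge c_\delta$ on $\Re\lambda\ge0$, so one can push the contour strictly into $\Re\lambda<0$ a distance $\sim|k|$ (here one must check $D$ stays bounded below on a slab $\{-c|k|\le\Re\lambda\le0\}$, using the representation $\cH(z)=2\pi\int_0^\infty e^{izt}N(t)\,dt$ from \eqref{redef-cH} together with \eqref{bounds-Nt} to control the analytic continuation into $\Im z<0$, i.e. $\Re\lambda<0$), and integration by parts in $\lambda$ converts the $|\lambda|^{-2}$ decay of the integrand into $\langle kt\rangle^{-N_0}$ decay after $N_0$ integrations by parts. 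This produces $\FG^r_k$ with no oscillatory part in the high-frequency regime, together with the $|k|^3\langle k\rangle^{-4}$ prefactor, which comes from the $\frac{1}{|k|^2}\cH$ factor combined with the observation that $\cH(z)=\cO(|k|^{-1})$-type smallness is not available, but rather $\cH(0)$-type expansions give the $|k|$-power gains — more carefully, the $|k|^3$ gain at low frequency must be extracted from the structure of $\cH$ and the pole subtraction, see below.

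For $|k|\le\kappa_0+\delta_0$, the contour deformation meets the poles $\lambda_\pm(k)=\pm i\tau_*(k)$ (pure imaginary for $|k|\le\kappa_0$, with small negative real part for $\kappa_0<|k|\le\kappa_0+\delta_0$) constructed in Theorems \ref{theo-LangmuirE} and \ref{theo-Landau}. By the residue theorem, crossing these simple zeros of $D(\cdot,k)$ contributes $\sum_\pm e^{\lambda_\pm(k)t}\,\mathrm{Res}_{\lambda=\lambda_\pm(k)}\TG_k(\lambda) = \sum_\pm e^{\lambda_\pm(k)t}\,\frac{1}{\partial_\lambda D(\lambda_\pm(k),k)}$, so we set $a_\pm(k)=1/\partial_\lambda D(\lambda_\pm(k),k)$; one checks the zeros are simple (so $\partial_\lambda D\neq0$ there) from the monotonicity $\cH'(z)<0$ established in the proof of Theorem \ref{theo-LangmuirE}, which also gives smoothness of $a_\pm(k)$ in $k$ on the support $\{|k|\le\kappa_0+1\}$; the normalization $a_\pm(0)=\pm i\tau_0/2$ follows by computing $\partial_\lambda D$ at $\lambda=\pm i\tau_0$, $k=0$ from the expansion \eqref{exp-cH}: $D\approx 1+\tau_0^2/\lambda^2$ so $\partial_\lambda D\approx -2\tau_0^2/\lambda^3 = \mp 2/( i\tau_0)$ at $\lambda=\pm i\tau_0$, hence $a_\pm(0)=\mp i\tau_0/2$ — I will match signs carefully. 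A cutoff in $k$ is inserted so that $a_\pm$ are compactly supported and glued smoothly to the high-frequency regime; the overlap region $\kappa_0\le|k|\le\kappa_0+\delta_0$ is where one must be careful that the ``remainder after pole subtraction'' contour can still be pushed to $\Re\lambda\sim-|k|$ past the now-slightly-damped poles.

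The remaining contour integral, now along $\{\Re\lambda=-c|k|\}$ (bent appropriately near $|k|\lesssim\kappa_0$ to avoid the branch behavior of $\cH$ below the real axis and the poles), defines $\FG^r_k(t)$; the exponential factor $e^{\lambda t}$ on this contour gives $e^{-c|k|t}$, and $N_0$ integrations by parts in $\lambda$ (legitimate since $\TG_k$ minus its poles is $C^{N_0}$ in $\lambda$ on the contour by \eqref{unibd-cH} and Theorem \ref{theo-Landau}'s regularity) turn the $(1+|\lambda|)^{-2}$ bound into the $\langle kt\rangle^{-N_0}$ factor; the $\partial_k^\alpha$ estimate follows by differentiating under the integral and on the moving contour, with each $k$-derivative either hitting $e^{\lambda t}$ (producing a $t$, hence the shift $-N_0+|\alpha|$ and the $|k|^{|\alpha|}$ weight) or the symbol (harmless by \eqref{unibd-cH}). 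The $|k|^3\langle k\rangle^{-4}$ prefactor: the $\langle k\rangle^{-4}$ comes from $\frac{1}{|k|^2}\cH(i\lambda/|k|)\cdot\frac{1}{|k|^2}$-type factors in iterating the resolvent identity once more (so that $\TG_k-1 = \frac{1}{|k|^4}\cH^2\TG_k + \frac{1}{|k|^2}\cH$, and the non-$\delta$ part decays like $|k|^{-4}$ at large $|k|$ after extracting the part already handled), while the $|k|^3$ at small $|k|$ comes from the fact that the density source in the eventual application vanishes to appropriate order — but for the \emph{Green function} itself the $|k|^3$ must be read off from the structure near $k=0$: there $D(\lambda,k)\to 1$ uniformly, $\TG_k\to 1$, and $\FG^r_k = \FG_k - \delta - \sum_\pm\FG^{osc}_{k,\pm}$ where the oscillatory parts already carry $e^{\pm i\tau_0 t}$ times $a_\pm(0)\neq 0$, so the subtraction produces cancellation of the $\cO(1)$ and $\cO(|k|)$, $\cO(|k|^2)$ terms in a Taylor expansion of $\TG_k$ in $\lambda$ around the poles; tracking the first nonvanishing term gives $|k|^3$. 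I expect the main obstacle to be exactly this last point — making the contour deformation rigorous in the regime $|k|\le\kappa_0$ where $\cH$ is only the boundary value of an analytic function (so the ``pushed'' contour cannot go arbitrarily far left, only as far as the analytic continuation from \eqref{bd-cH1} and \eqref{bounds-Nt} permits, namely $\Re\lambda\gtrsim -|k|$) and simultaneously extracting the sharp $|k|^3\langle k\rangle^{-4}$ gain uniformly across $k\in\RR^3$; the interplay between the pole subtraction near $|k|=\kappa_0$ (where $\lambda_\pm$ merges onto the imaginary axis) and the uniformity of the estimate is the delicate part.
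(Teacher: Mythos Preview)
Your plan contains a genuine gap, and it is precisely the obstacle you flag at the end but do not resolve: for equilibria that are only $C^{N_0}$, the contour \emph{cannot be pushed into $\Re\lambda<0$ at all}. You claim that \eqref{bounds-Nt} lets you continue $\cH(z)=2\pi\int_0^\infty e^{izt}N(t)\,dt$ into a strip $\Im z>-c$, i.e. $\Re\lambda\gtrsim-|k|$. But \eqref{bounds-Nt} only gives polynomial decay $|N(t)|\lesssim\langle t\rangle^{-N_0}$; for any $\Im z<0$ the factor $e^{izt}$ grows exponentially and the integral diverges. So $\cH$, and hence $D(\lambda,k)$, has no analytic extension past the imaginary axis, and the residue/contour-shift argument you sketch collapses in every regime, not just near $|k|=\kappa_0$. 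This is exactly what the paper stresses just below \eqref{def-FHk}.

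The paper's approach is genuinely different from yours. The contour is deformed only \emph{onto} the imaginary axis, with semicircles $\cC_\pm$ of radius $|k|$ (in the right half-plane) around the poles $\pm i\tau_*(k)$. All time decay comes from repeated integration by parts in $\lambda$ along this contour, formula \eqref{int-FGdecomp-n}, not from any $e^{-c|k|t}$ factor. To isolate the poles and extract the $|k|^3$ gain without analyticity, the paper introduces an explicit rational approximation: from the expansion $D(\lambda,k)=1+\tau_0^2/\lambda^2-\tau_1^2|k|^2/\lambda^4-\lambda^{-4}\cR(\lambda,k)$ one splits $\TG_k=1+\TG_{k,0}+\TG_{k,1}$, where $\TG_{k,0}(\lambda)=\dfrac{-\lambda^2\tau_0^2+|k|^2\tau_1^2}{\lambda^4+\lambda^2\tau_0^2-|k|^2\tau_1^2}$ is rational (hence entire) and can be treated by genuine residue calculus --- its poles $\lambda_{\pm,0}$ approximate $\lambda_\pm$ to $\cO(|k|^4)$, and its extra real pole $\mu_{-,0}\sim-\tau_1|k|$ contributes a term of size $|k|^3 e^{-\tau_1|kt|}$, which is where the $|k|^3$ prefactor first appears. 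The difference $\TG_{k,1}$ carries an explicit factor $\cR(\lambda,k)=\cO(|k|^4/|\lambda|^2)$ in the numerator, so on each piece $\Gamma_1,\Gamma_2,\cC_\pm$ of the imaginary-axis contour one can estimate directly; near the poles one Taylor-expands both $\TG_k$ and $\TG_{k,0}$ to order $N_0$ and uses that their coefficients agree to $\cO(|k|^4)$, so the fake residues at $\lambda_{\pm,0}$ cancel against those from $\TG_{k,0}$, leaving the true residues $a_\pm(k)=1/\partial_\lambda D(\lambda_\pm(k),k)$. Your resolvent-iteration idea for the $\langle k\rangle^{-4}$ factor is along the right lines, but the low-frequency $|k|^3$ is not a Taylor-cancellation of the oscillatory parts as you suggest; it comes structurally from the rational splitting.
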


\begin{figure}[t]
\centering
\subfigure{
\includegraphics[scale=.5]{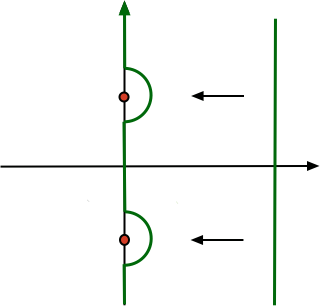}
}
\subfigure{
\includegraphics[scale=.5]{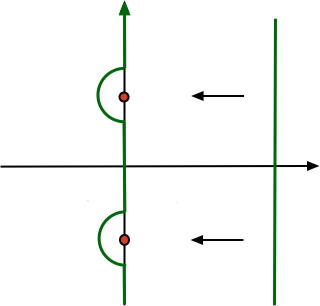}
}
\put(-284,60){$0$}
\put(-110,60){$0$}
\put(-261,90){$\cC_+$}
\put(-261,17){$\cC_-$}
\put(-127,90){$\cC^*_+$}
\put(-127,17){$\cC^*_-$}
\put(-22,1){$\Gamma = \{\Re \lambda \gg1\}$}
\put(-196,1){$\Gamma = \{\Re \lambda \gg1\}$}
\put(-95,150){$\mathbb{C}$}
\put(-268,150){$\mathbb{C}$}
\put(-294,102){$i\tau_*$}
\put(-301,30){$-i\tau_*$}
\put(-95,102){$i\tau_*$}
\put(-98,30){$-i\tau_*$}
\caption{\em Illustrated on the left is the contour of integration $\Gamma =  \Gamma_1 \cup \Gamma_2 \cup \cC_\pm$, while on the right is the contour $\Gamma^* =  \Gamma_1 \cup \Gamma_2 \cup \cC^*_\pm$.}
\label{fig-contour0}
\end{figure}

\begin{proof} The case when $|k|\gtrsim 1$ was easier and much studied; see, e.g., \cite{MV, GNR1, HKNF2}, where no oscillatory component of the Green function was present. We shall focus on the singular case when $|k|\le \kappa_0 +1$, where $\TG_k(\lambda)$ has poles at $\lambda_\pm(k)$. The issue was to isolate the poles of the resolvent kernel $\TG_k(\lambda)$ without analyticity past the imaginary axis. As the resolvent kernel $\TG_k(\lambda) = \frac{1}{D(\lambda,k)}$ is holomorphic in $\{\Re \lambda >0\}$, by Cauchy's integral theorem, we may move the contour of integration $\Gamma = \{\Re \lambda = \gamma_0\}$ towards the imaginary axis so that 
\begin{equation}\label{int-FGdecomp}
\FG_{k}(t) =  \frac{1}{2\pi i}\int_{\Gamma}e^{\lambda t} \TG_{k}(\lambda)\; d\lambda,
\end{equation}
where we have decomposed $\Gamma$, as depicted in Figure \ref{fig-contour0}, into 
\begin{equation}\label{def-Gamma} 
\Gamma = \Gamma_1 \cup \Gamma_2 \cup \cC_\pm
\end{equation}
having set 
$$
\begin{aligned}
 \Gamma_1 &= \{ \lambda = i\tau, \quad |\tau \pm \tau_*(k)|\ge  |k|, \quad |\tau| > |k|\Upsilon_*\} ,
 \\
  \Gamma_2 &= \{\lambda = i\tau, \quad |\tau \pm \tau_*(k)|\ge  |k|, \quad |\tau| \le |k|\Upsilon_*\} ,
   \\
  \cC_\pm &= \{\Re \lambda \ge 0, \quad |\lambda \mp {i\tau_*(k)}| =  |k|\} ,
    \end{aligned}$$
where 
\begin{equation}\label{def-Upsilonstar}
\Upsilon_* = \left\{ \begin{aligned}
\Upsilon, \qquad & \mbox{if}\quad \Upsilon<\infty, 
\\M, \qquad &\mbox{if}\quad \Upsilon = \infty,
\end{aligned}
\right.
\end{equation}    
for some sufficiently large constant $M>0$. 
Note that the semicircle $\cC_\pm$ is to avoid the singularity due to the poles at $\lambda_\pm(k)$ of $\TG_{k}(\lambda)$, while the integrals over $\Gamma_1$ and $\Gamma_2$ are understood as taking the limit of $\Re \lambda \to 0^+$. To establish decay in $t$ as claimed in \eqref{bd-Gr}, we may integrate by parts in $\lambda$ repeatedly, for $\Re \lambda>0$, and then take the limit of $\Re \lambda \to 0^+$, to get 
\begin{equation}\label{int-FGdecomp-n}
\FG_{k}(t) =  \frac{(-1)^n}{2\pi i (|k|t)^n }\int_{\Gamma}e^{\lambda t} |k|^n \partial_\lambda^n \TG_{k}(\lambda)\; d\lambda,
\end{equation}
for any $n\ge 0$,  without introducing any boundary terms. In what follows, we shall use the formulation \eqref{int-FGdecomp-n}, instead of \eqref{int-FGdecomp}, to bound the Green function $\FG_{k}(t) $. 

\subsubsection*{Bounds on $D(\lambda,k)$}

Recall from \eqref{def-Dlambda1} that $D(\lambda,k) = 1 - \frac{1}{|k|^2}\cH(i\lambda/|k|)$. Therefore, using the expansion \eqref{exp-cH} and \eqref{def-tauj}, with $m=2$, we may write  
\begin{equation}\label{expansion-DL}
D(\lambda,k) = 1 + \frac{\tau_0^2}{\lambda^2} - \frac{\tau_1^2 |k|^2}{\lambda^4} - \frac{1}{\lambda^4}\cR(\lambda,k)
\end{equation}
where the remainder $\cR(\lambda,k) $ is defined by 
\begin{equation}\label{def-cRtau}\cR(\lambda,k) = 2\pi|k|^2\int_{\RR} \frac{u^5 \mu(\frac{1}{2}u^2)}{i\lambda/|k|-u} \;du = -2i\pi|k|^3\int_{\RR} \frac{u^5 \mu(\frac{1}{2}u^2)}{\lambda + iu|k|} \;du .
\end{equation}
Let us first bound $\cR(\lambda,k) $. For $\Re\lambda>0$, we write 
\begin{equation}\label{def-cRtau1}
\cR(\lambda,k) = -2i\pi|k|^3 \int_0^\infty e^{-\lambda t} \int_{\RR} e^{-i|k| t u}u^5 \mu(\frac{1}{2}u^2) \;dudt
 \end{equation}
in which the $u$-integration is bounded by $C_0 \langle kt\rangle^{-K_0}$, upon repeatedly integrating by parts in $u$ and using the regularity assumption on $\mu(\cdot)$. This yields 
\begin{equation}\label{good1st-bdR}
|\partial_\lambda^n\mathcal{R}(\lambda,k) |\lesssim \int_0^\infty |k|^3 t^n \langle kt\rangle^{-K_0} dt\lesssim |k|^{2-n},\end{equation}
uniformly for all $\Re \lambda \ge 0$ and for $0\le n\le K_0-2$. In addition, we may further expand the remainder $\mathcal{R}(\lambda,k)$, up to higher orders in $z = i\lambda/|k|$, as done in \eqref{exp-cH}, we obtain 
\begin{equation}\label{good-Rb}
|\partial_\lambda^n \mathcal{R}(\lambda,k) |\lesssim |k|^4 |\lambda|^{-2-n},
\end{equation}
for $0\le n\le K_0-2$, where we stress that the bounds hold uniformly for $\Re \lambda \ge 0$. In particular, as seen below, we may treat $\mathcal{R}(\lambda,k) $ as a remainder in the region where $|\lambda|\gg |k|$.

\subsubsection*{Decomposition of $\FG_k(t)$.}

Let us now bound the Green function $\FG_{k}(t)$ via the representation \eqref{int-FGdecomp-n}. 
By definition, we write  
$$
\begin{aligned}
\TG_k(\lambda ) &=  \frac{1}{D(\lambda,k)}   = 1 + \frac{1-D(\lambda,k)}{D(\lambda,k)}  .
\end{aligned}
$$
Using the expansion \eqref{expansion-DL} of ${D(\lambda,k)}$, we write 
$$
\begin{aligned}
\TG_k(\lambda ) 
&=1 +  \frac{ - \lambda^2 \tau_0^2+  |k|^2 \tau_1^2+ \mathcal{R}(\lambda,k)}{\lambda^4 +\lambda^2 \tau_0^2 -  |k|^2\tau_1^2
- \mathcal{R}(\lambda,k)} .
\end{aligned}
$$
In viewing $\cR(\lambda,k)$ as a remainder, we further decompose 
\begin{equation}\label{TG-decompose}
\begin{aligned}
\TG_k(\lambda) 
&=1 + \TG_{k,0}(\lambda) + \TG_{k,1}(\lambda ) 
\end{aligned}
\end{equation}
where 
\begin{equation}\label{def-TGplus}
\begin{aligned}
\TG_{k,0}(\lambda) & =   \frac{-\lambda^2 \tau_0^2+  |k|^2 \tau_1^2}{\lambda^4 + \lambda^2\tau_0^2 - |k|^2\tau_1^2} 
\\
\TG_{k,1}( \lambda ) 
&= \frac{\lambda^4  \mathcal{R}(\lambda,k) }{(\lambda^4 +\lambda^2 \tau_0^2-  |k|^2\tau_1^2)(\lambda^4 +\lambda^2\tau_0^2 -  |k|^2\tau_1^2
- \mathcal{R}(\lambda,k))} .
\end{aligned}
\end{equation}
We also denote by $\FG_{k,0}(t)$ and $\FG_{k,1}(t)$ the corresponding Green function, see \eqref{int-FGdecomp}. Denote by $\lambda_{\pm,0}(k)$ and $\lambda_\pm(k)$ the pure imaginary poles of $\TG_{k,0}(\lambda)$ and $\TG_{k}(\lambda)$, respectively. 
Via the representation \eqref{int-FGdecomp}, we shall prove that  
\begin{equation}\label{bd-Grk0}
\Big| \FG_{k,0}(t)  -  \sum_\pm e^{\lambda_{\pm,0}t} \mathrm{Res} (\TG_{k,0}(\lambda_{\pm,0}))  \Big| \lesssim |k|^3 e^{- \tau_1 |kt|} ,
\end{equation}
while via the representation \eqref{int-FGdecomp-n}, we claim that 
\begin{equation}\label{bd-Grk1}
\Big| \FG_{k,1}(t)  -  \sum_\pm e^{\lambda_{\pm}t}\mathrm{Res} (\TG_{k}(\lambda_{\pm}))  +  \sum_\pm e^{\lambda_{\pm,0}t} \mathrm{Res} (\TG_{k,0}(\lambda_{\pm,0}))   \Big| \le C_1 |k|^3 \langle kt\rangle^{-K_0} .
\end{equation}
In view of the decomposition \eqref{TG-decompose}, this would complete the proof of the decomposition \eqref{decomp-FG} and the remainder bounds \eqref{bd-Gr}, noting the residue $\mathrm{Res} (\TG_{k,0}(\lambda_{\pm,0}))$ in \eqref{bd-Grk1} is cancelled out with that in \eqref{bd-Grk0}.

\subsubsection*{Bounds on $\FG_{k,0}(t)$.}

Let us start with bounds on $\FG_{k,0}(t)$ via the representation \eqref{int-FGdecomp}. 
Note that the polynomial $\lambda^4 + \lambda^2\tau_0^2 - |k|^2\tau_1^2$ has four distinct roots: 
\begin{equation}\label{def-lambda1234}\lambda_{\pm,0} = \pm i \Big(\frac{\tau_0^2+\sqrt{\tau_0^4+ 4 \tau_1^2 |k|^2}}{2}\Big)^{1/2} , \qquad \mu_{\pm,0} = \pm \Big( \frac{2\tau_1^2 |k|^2}{\tau_0^2 + \sqrt{\tau_0^4 + 4 \tau_1^2 |k|^2}}\Big)^{1/2} .\end{equation}
Note that $\lambda_{\pm,0}$ are pure imaginary roots, while $\mu_{\pm,0}$ are real valued. Note in particular that 
\begin{equation}\label{fake-roots}|\lambda_{\pm,0}(k) \mp i \tau_*(k)| \lesssim |k|^2 , \qquad |\mu_{\pm,0}(k) \mp \tau_1 |k|| \lesssim |k|^2.\end{equation}
Except $\mu_{+,0} \sim \tau_1|k|$, the other three roots lie to the left of $\Gamma$, and thus, by Cauchy's theorem, we compute 
$$
\begin{aligned} 
\FG_{k,0}(t)& =\frac{1}{2\pi i } \int_\Gamma e^{\lambda t} \TG_{k,0}(\lambda) \; d\lambda
\\&= \sum_\pm e^{\lambda_{\pm,0}t} \mathrm{Res} (\TG_{k,0}( \lambda_{\pm,0}))  + e^{\mu_{-,0}t} \mathrm{Res} (\TG_{k,0}(\mu_{-,0}))  
\\&\quad+ \frac{1}{2\pi i}\lim_{\gamma_0 \to -\infty}\int_{\Re \lambda = \gamma_0}
e^{\lambda t} \TG_{k,0}(\lambda)\; d\lambda
\\&=  \sum_\pm e^{\lambda_{\pm,0}t} \mathrm{Res} (\TG_{k,0}(\lambda_{\pm,0}))   + e^{\mu_{-,0}t} \mathrm{Res} (\TG_{k,0}(\mu_{-,0}))  ,
\end{aligned}$$
upon using the fact that the last integral vanishes in the limit of $\gamma_0 \to -\infty$, noting that $\TG_{k,0}(\lambda) $ decays at order $\lambda^{-2}$ for $|\lambda|\to \infty$. It remains to bound $ e^{\mu_{-,0}t} \mathrm{Res} (\TG_{k,0}(\mu_{-,0})) $. Indeed, a direct calculation yields 
\begin{equation}\label{fakeres-G0}
\begin{aligned}
e^{\mu_{-,0}t} \mathrm{Res} (\TG_{k,0}(\mu_{-,0})) 
&=  \frac{\mu_{-,0}^4 e^{\mu_{-,0}t} }{(\mu_{-,0} - \lambda_{+,0})(\mu_{-,0} - \lambda_{-,0})(\mu_{-,0} - \mu_{+,0})}
\\
& = \frac{\mu_{-,0}^3 e^{\mu_{-,0}t} }{2(\mu_{-,0} - \lambda_{+,0})(\mu_{-,0} - \lambda_{-,0})}
\end{aligned}\end{equation}
upon using $\mu_{+,0} = -\mu_{-,0}$. By definition, we note that $\lambda_{\pm,0} = \pm i \tau_0(1 + \cO(|k|^2))$ and $\mu_{\pm,0} = \pm \sqrt{ \tau_1^2} |k| (1 + \cO(|k|^2))$. 
Hence, 
$$e^{\mu_{-,0}t} |\mathrm{Res} (\TG_{k,0}(\mu_{-,0})) | \lesssim |k|^3 e^{- \tau_1 |kt|},$$
giving \eqref{bd-Grk0}.

\subsubsection*{Bounds on $\TG_{k,1}(\lambda)$ on $\Gamma_1$.}

Next, we prove \eqref{bd-Grk1} via the representation \eqref{int-FGdecomp-n}. We start with the integral of $\TG_{k,1}(\lambda)$ on $\Gamma_1$: namely for $\lambda = i\tau$, where $|\tau|> |k|\Upsilon_*$ and $|\tau \pm \tau_*| \ge  |k|$. 
Recall that the polynomial $\tau^4 -\tau^2 \tau_0^2- |k|^2\tau_1^2$ has four distinct roots $\tau_{\pm,0} \sim \pm \tau_0 $ and $\tau_{\pm,1} \sim \pm i\tau_1 |k|$. Note also that $|\tau_* - \tau_0|\lesssim |k|^2$. Therefore, when $|\tau \pm \tau_*|\ge |k|$, we have 
\begin{equation}\label{lowbd-tau} 
\Big| \frac{1}{\tau^4 -\tau^2\tau_0^2 - |k|^2\tau_1^2} \Big |\lesssim \left \{ \begin{aligned} (1+|\tau|^4)^{-1}
\qquad& \quad \mbox{if}\quad |\tau|\ge \frac{3\tau_*}2 
\\
(|\tau \pm \tau_*| + |k|)^{-1} \qquad& \quad \mbox{if}\quad \frac{\tau_*}2\le |\tau|\le \frac{3\tau_*}2 
 \\
 ( |\tau|^2 + |k|^2 )^{-1}\qquad& \quad \mbox{if}\quad |k|\Upsilon_*< |\tau| \le \frac{\tau_*}2 
  \end{aligned}\right.
\end{equation}
Similarly, we now check that the exact same upper bounds hold for $(\tau^4 - \tau^2 \tau_0^2-  |k|^2\tau_1^2
- \mathcal{R}(i\tau,k))^{-1}$ in the region when $|\tau|> |k|\Upsilon_*$ and $|\tau \pm \tau_*| \ge  |k|$. Indeed, the bounds are clear in the case when $|k|\gtrsim 1$, since $|\tau \pm \tau_*| \ge |k|\gtrsim 1$, recalling $\tau_*(k)$ is the solution of $\tau^4 - \tau^2 \tau_0^2-  |k|^2\tau_1^2
- \mathcal{R}(i\tau,k)=0$. It remains to study the case when $|k|\ll1$. Using \eqref{good-Rb}, we have $|\mathcal{R}(i\tau,k)| \lesssim |k|^4\tau^{-2}$. Therefore, $\mathcal{R}(i\tau,k)$ is a perturbation of $|k|^2$ in the case when $|k|\ll |\tau|$, and so the same bounds as in \eqref{lowbd-tau} remain valid. Finally, we consider the case when $|k|\Upsilon_* \le |\tau| \lesssim |k|$, which is only relevant for $\Upsilon <\infty$. In this case, by definition \eqref{def-cRtau}, we compute 
$$\cR(i\tau,k) = -2\pi|k|^2\int_{\RR} \frac{u^5 \mu(\frac{1}{2}u^2)}{\tau/|k| + u} \;du =  2\pi|k|^2\int_{\{|u|<\Upsilon\}} \frac{u^6 \mu(\frac{1}{2}u^2)}{\tau^2/|k|^2 - u^2} \;du ,$$
which is well-defined and in particular strictly positive, since $|\tau|>|k|\Upsilon$ (recalling we are in the case when $\Upsilon<\infty$). Therefore, since $|\tau| \lesssim |k|$ and $|k|\ll1$, we have $|\tau|\le \tau_0/2$ and so $\tau^4 \le \frac{\tau_0^2}4 \tau^2$. This gives 
\begin{equation}\label{low-Rsmall} 
\Big |\tau^4 - \tau^2 \tau_0^2-  |k|^2\tau_1^2
- \mathcal{R}(i\tau,k) \Big | \ge \frac 34 \tau^2\tau_0^2+  |k|^2\tau_1^2 + \mathcal{R}(i\tau,k)
\gtrsim |\tau|^2 + |k|^2,\end{equation}
yielding the same bounds as in \eqref{lowbd-tau} for $(\tau^4 - \tau^2 \tau_0^2-  |k|^2\tau_1^2
- \mathcal{R}(i\tau,k))^{-1}$.

Next, recalling \eqref{good-Rb}, we have $|\tau^4 \mathcal{R}(i\tau,k) | \lesssim |\tau|^2 |k|^4$. 
Putting these into \eqref{def-TGplus} yields 
\begin{equation}\label{bd-TGk10}
|\TG_{k,1}(i\tau)|
\lesssim \left \{ \begin{aligned} |k|^4 |\tau|^2(1 + |\tau|^4)^{-2} \quad& \quad \mbox{if}\quad |\tau|\ge \frac{3\tau_*}2 
\\
 |k|^4 |\tau|^2  (|\tau \pm \tau_*|^2 + |k|^2)^{-1}  \quad& \quad \mbox{if}\quad \frac{\tau_*}2\le |\tau|\le \frac{3\tau_*}2 
 \\
 |k|^4 |\tau|^2( |\tau|^2 + |k|^2)^{-2} \quad& \quad \mbox{if}\quad |k|\Upsilon_*< |\tau| \le \frac{\tau_*}2 
  \end{aligned}\right.
\end{equation}
whenever $|\tau \pm \tau_*|\ge |k|$. Using the fact that 
$$\int_{\RR} (x^2 + |k|^2)^{-1}dx \le 4|k|^{-1},$$
we thus obtain 
$$
\Big|\int_{\{|\tau\pm \tau_*|\ge |k|, \;|\tau| > |k|\Upsilon_*\}} e^{i \tau t} \TG_k(i\tau) \; d\tau\Big|  \lesssim |k|^3 .$$

Similarly, we next bound the integral of $\partial_\lambda^n\TG_k(\lambda) $ on $\Gamma_1$. It suffices to show that  $|k|^n \partial_\lambda^n \TG_{k,1}(i\tau)$ satisfies the same bounds as those for $\TG_{k,1}(i\tau)$. Indeed, in view of \eqref{lowbd-tau}, we have
$$
\Big | \partial_\lambda \Big(\frac{1}{\lambda^4+ \lambda^2\tau_0^2 - |k|^2\tau_1^2}\Big)_{\lambda = i\tau}\Big| 
\lesssim \left \{ \begin{aligned} \langle \tau\rangle^3(1 + |\tau|^4)^{-2} 
\qquad& \quad \mbox{if}\quad |\tau|\ge \frac{3\tau_*}2 
\\
(|\tau \pm \tau_*| + |k|)^{-2} \qquad& \quad \mbox{if}\quad \frac{\tau_*}2\le |\tau|\le \frac{3\tau_*}2 
 \\
|\tau| (  |\tau|^2 + |k|^2)^{-2} \qquad& \quad \mbox{if}\quad |k|\Upsilon_* < |\tau| \le \frac{\tau_*}2 .
  \end{aligned}\right.
$$
Note that $(|\tau \pm \tau_*| + |k|)^{-1} \le |k|^{-1} $ and $|\tau| (  |\tau|^2 + |k|^2)^{-1} \le |k|^{-1}$. This proves that the $|k|\partial_\lambda$ derivative of $(\tau^4 -\tau^2 \tau_0^2 - |k|^2\tau_1^2)^{-1}$ satisfies the same bounds as those for $(\tau^4 -\tau^2 \tau_0^2- |k|^2\tau_1^2)^{-1}$, see \eqref{lowbd-tau}. The $|k|\partial_\lambda$ derivatives of $(\lambda^4 +\lambda^2\tau_0^2 -  |k|^2\tau_1^2
- \mathcal{R}(\lambda,k))^{-1}$ follow similarly, upon using \eqref{good-Rb} and the lower bound \eqref{low-Rsmall}.
This proves that $|k|^n \partial_\lambda^n \TG_{k,1}(i\tau)$ satisfy the same bounds as in \eqref{bd-TGk10} for $\TG_{k,1}(i\tau)$, yielding  
$$
\Big|\int_{\{|\tau\pm \tau_*|\ge |k|, \;|\tau| > |k|\Upsilon_*\}} e^{i \tau t} |k|^n \partial_\lambda^n  \TG_k(i\tau) \; d\tau\Big|  \lesssim |k|^3,$$
for $0\le n\le K_0$.

\subsubsection*{Bounds on $\TG_{k,1}(\lambda)$ on $\Gamma_2$.}

We next consider the integral on $\Gamma_2$, where $\lambda = i\tau$ with $|\tau | \le |k|\Upsilon_*$ and $|\tau \pm \tau_*| \ge  |k|$. Note that in this case, $\mathcal{R}(i\tau,k)$ is of order $|k|^2$ and thus no longer a remainder in the expansion of $D(\lambda,k)$. However, since $|\tau | \le |k|\Upsilon_*$, we are in the interior of the essential spectrum, where we can make use of the lower bound on $D(\lambda,k)$. Indeed, recalling from \eqref{def-TGplus} and \eqref{expansion-DL}, we have
\begin{equation}\label{recallGk11}
\TG_{k,1}( \lambda ) 
= \frac{\mathcal{R}(\lambda,k) }{(\lambda^4 +\lambda^2 \tau_0^2-  |k|^2\tau_1^2)D(\lambda,k)} .
\end{equation}
Evaluating at $\lambda = i\tau$, we have 
$$\TG_{k,1}( i\tau) 
= \frac{\mathcal{R}(i\tau,k) }{(\tau^4 -\tau^2\tau_0^2 -  |k|^2\tau_1^2)D(i\tau ,k)}.
$$
We first claim that 
 \begin{equation}\label{low-tau4}
|D(i \tau,k)| \gtrsim \Big(1 + \frac{1}{|k|^2}\Big),
\end{equation}
for all $\tau$ so that $|\tau | \le |k|\Upsilon_*$ and $|\tau \pm \tau_*| \ge  |k|$. Indeed, when $\Upsilon =\infty$, the bound follows directly from \eqref{lowbd-Ditau2}, since $\Upsilon_* = M$ and $|\tau | \le |k|M$ is a compact subset in $\RR$. Next, when $\Upsilon <\infty$, we have $\kappa_0>0$ and the bound \eqref{low-tau4} holds for $|k|\le \kappa_0/2$, using \eqref{lowbd-Ditau3}. Finally, for $\kappa_0/2\le |k|\le \kappa_0+1$, we clearly have $|D(i \tau,k)| \gtrsim 1$, since $\tau$ is away from the unique solutions $\pm\tau_*(k)$ of $D(i \tau,k) =0$. This proves \eqref{low-tau4}. {Next, we claim that
\begin{equation}\label{loweasy} |\tau^4 -\tau^2 \tau_0^2-  |k|^2\tau_1^2| \gtrsim  \tau^2 + |k|^2\end{equation}
for all $\tau$ so that $|\tau | \le |k|\Upsilon_*$ and $|\tau \pm \tau_*| \ge  |k|$. Indeed, the bound is direct for $|k|\ll1$, since $\tau^2 \lesssim |k|^2 \le \frac12 \tau_0^2$. On the other hand, since we are in the region where $\lambda = i\tau$ is away from the zeros of the polynomial $\lambda^4 +\lambda^2 \tau_0^2-  |k|^2\tau_1^2$, the lower bound thus follows for $|k|\gtrsim1$. 
}

Therefore, using \eqref{low-tau4}, \eqref{loweasy}, and $|\mathcal{R}(i\tau,k)|\lesssim |k|^2$ (recalling \eqref{good1st-bdR}), we obtain 
$$
\begin{aligned}
| \TG_{k,1}(i\tau)|
& \lesssim 
\frac{ |k|^2}{( |\tau|^2 + |k|^2)(1+|k|^{-2})} 
\lesssim |k|^2,
\end{aligned}$$
which gives 
\begin{equation}\label{intGbdk11}
\Big|\int_{\{ |\tau| \le |k|\Upsilon_*\}} e^{ i\tau t}  \TG_{k,1}(i\tau) \; d\tau \Big|\lesssim |k|^3.
\end{equation}
{Similarly, we shall next prove that the above estimates also hold for $|k|^n \partial_\lambda^n \TG_{k,1}(i\tau)$ for $1\le n\le K_0$. Indeed, we first check each term in \eqref{recallGk11}. Recalling from \eqref{good1st-bdR}, we have $|k|^n |\partial_\lambda^n\mathcal{R}(\lambda,k)| \lesssim |k|^2$. On the other hand, using \eqref{def-Dlambda1} and \eqref{unibd-cH}, for $n\ge 1$, we bound 
$$\begin{aligned}
|k|^n |\partial_\lambda^nD(\lambda,k)|
= |k|^{-2}|\partial^n_z\cH(i\lambda/|k|)| \lesssim |k|^{-2}.
\end{aligned}
$$
Next, to compute the derivatives of $\frac1{D(\lambda,k)}$, we recall the  Fa\`a di Bruno's formula for derivatives of a composite function, namely
\begin{equation}\label{Faa} \partial_x^n f(u) = \sum_{\{m_j\}} C_{m_j,n} \partial_u^m f (u) \prod_{j=1}^n (\partial_x^j u)^{m_j}  
\end{equation}
where $m_j \ge 0$, $\sum m_j = m$, and the summation is over all the partitions $\{ m_j\}_{j=1}^n$ of $n$ so that $\sum_j jm_j = n$. Using \eqref{Faa} with $f(u) = \frac{1}{u}$, we compute 
$$\begin{aligned}
|k|^n\Big |\partial^n_\lambda\Big(\frac{1}{D(\lambda,k)}\Big)\Big|
\lesssim |k|^n\sum_{\{m_j\}} |D(\lambda,k)|^{-m-1}\prod_{j=1}^n |\partial_\lambda^j D(\lambda,k)|^{m_j}  .
\end{aligned}
$$
Now, evaluating at $\lambda = i\tau$ and using the lower bound \eqref{low-tau4}, which also reads $|D(i \tau,k)| \ge |k|^{-2}$ since $k$ is bounded, we obtain  
$$\begin{aligned}
|k|^n\Big |\partial^n_\lambda\Big(\frac{1}{D(\lambda,k)}\Big)\Big|_{\lambda = i\tau}
&\lesssim |k|^n\sum_{\{m_j\}} |k|^{2(m+1)}\prod_{j=1}^n |k|^{m_j(-2-j)} 
\\
&\lesssim |k|^n\sum_{\{m_j\}} |k|^{2(m+1)} |k|^{\sum_j m_j(-2-j)}  
\\
&\lesssim |k|^n\sum_{\{m_j\}} |k|^{2(m+1)} |k|^{-2m -n}  
\\
&\lesssim |k|^2,
\end{aligned}
$$
upon recalling $\sum_j m_j = m$ and $\sum_j j m_j = n$. Finally, it is direct to check that 
\begin{equation}\label{poly1}\begin{aligned}
|k|^n\Big |\partial^n_\lambda\Big(\frac{1}{\lambda^4 +\lambda^2 \tau_0^2-  |k|^2\tau_1^2}\Big)\Big|_{\lambda = i\tau} \lesssim (\tau^2+|k|^2)^{-1}
\end{aligned}
\end{equation}
for $|\tau|\lesssim |k|$, provided \eqref{loweasy}. Putting the above estimates together into \eqref{recallGk11}, we obtain the derivative estimates $|k|^n |\partial_\lambda^n \TG_{k,1}(i\tau)| \lesssim |k|^2$, and therefore 
\begin{equation}\label{intGbdk11d}
\Big|\int_{\{ |\tau| \le |k|\Upsilon_*\}} e^{ i\tau t}  |k|^n \partial_\lambda^n \TG_{k,1}(i\tau)\; d\tau \Big|\lesssim |k|^3.
\end{equation}
}


\subsubsection*{Bounds on $\TG_{k,1}(\lambda)$ on $\cC_\pm$.}

Finally, we study the case when $\lambda$ near the singularity of $\TG_k(\lambda)$: namely, when $\lambda$ is on the semicircle $|\lambda \mp i\tau_*(k)| = |k|$ with $\Re \lambda \ge 0$. In this case, we first claim that 
\begin{equation}\label{good-GRb1}
|\partial_\lambda^n \mathcal{R}(\lambda,k) |\lesssim |k|^4 ,
\end{equation}
uniformly in $\Re \lambda \ge 0$ for $0\le n\le K_0$. Indeed, for $|k|\gtrsim 1$, the estimate \eqref{good-GRb1} follows from \eqref{good1st-bdR}, while for $|k|\ll1$, we note that $|\lambda|\ge \tau_*(k)/2 \gtrsim 1$, and so \eqref{good-GRb1} follows from 
\eqref{good-Rb}. Next, recall from the decomposition \eqref{TG-decompose} that $\TG_{k,1}(\lambda ) = \TG_{k}(\lambda ) - \TG_{k,0}(\lambda )$, 
which reads
\begin{equation}\label{recomp-G1}
\begin{aligned}
\TG_{k,1}(\lambda ) = \frac{ \lambda^4 }{\lambda^4 +\lambda^2\tau_0^2 -  |k|^2\tau_1^2
- \mathcal{R}(\lambda,k)}  - \frac{ \lambda^4}{\lambda^4 +\lambda^2\tau_0^2 -  |k|^2\tau_1^2} .
\end{aligned}\end{equation}
Let $\lambda_\pm(k)$ and $\lambda_{\pm,0}(k)$ be the poles of the Green kernels 
$\TG_k(\lambda)$ and $\TG_{k,0}(\lambda) $, respectively, that lie on the imaginary axis, see \eqref{Landau} and \eqref{def-lambda1234}. Clearly, they are isolated zeros of $\lambda^4 +\lambda^2\tau_0^2 -  |k|^2\tau_1^2
- \mathcal{R}(\lambda,k)$ and $\lambda^4 +\lambda^2\tau_0^2 -  |k|^2\tau_1^2$, respectively. By construction, we have
\begin{equation}\label{diff-lambdaG} 
 |\lambda_\pm(k) - \lambda_{\pm,0}(k)|\lesssim |k|^4.
\end{equation}
In addition, we have 
$$
\begin{aligned}
\lambda^4 +\lambda^2\tau_0^2 -  |k|^2\tau_1^2
& = \sum_{n=1}^{K_0} a_{\pm,n,0}(k) (\lambda - \lambda_{\pm,0}(k))^n+  \mathcal{R}_{\pm,0}(\lambda,k),
\\
\lambda^4 +\lambda^2\tau_0^2 -  |k|^2\tau_1^2
- \mathcal{R}(\lambda,k)
 &=
\sum_{n=1}^{K_0} a_{\pm,n}(k) (\lambda - \lambda_\pm(k))^n+  \mathcal{R}_{\pm}(\lambda,k),
\end{aligned}$$
for any $\lambda \in \{|\lambda \mp i\tau_*| \le |k|\}$ with $\Re \lambda \ge 0$.  
In view of \eqref{good-GRb1} and \eqref{diff-lambdaG}, it is direct to deduce 
\begin{equation}\label{coeff-RaG} |a_{\pm,n,0}(k)  - a_{\pm,n}(k)| \lesssim |k|^4, \qquad  |\partial_\lambda^n \mathcal{R}_{\pm,0}(\lambda,k) - \partial_\lambda^n \mathcal{R}_{\pm}(\lambda,k)|\lesssim |k|^4,\end{equation}
for $0\le n\le K_0$. Since the leading coefficients $a_{\pm,1,0}(k)$ and $a_{\pm,1}(k)$ never vanish, we obtain 
\begin{equation}\label{exp-inverseGR}
\begin{aligned}
\frac{\lambda^4}{\lambda^4 +\lambda^2\tau_0^2 -  |k|^2\tau_1^2} 
& = \frac{1}{\lambda - \lambda_{\pm,0}(k)}
\sum_{n=0}^{K_0} b_{\pm,n,0}(k) (\lambda - \lambda_{\pm,0}(k))^n+  \widetilde{\mathcal{R}}_{\pm,0}(\lambda,k),
\\
\frac{\lambda^4}{\lambda^4 +\lambda^2\tau_0^2 -  |k|^2\tau_1^2
- \mathcal{R}(\lambda,k)} 
 &= 
 \frac{1}{\lambda - \lambda_\pm(k)}
\sum_{n=0}^{K_0} b_{\pm,n}(k) (\lambda - \lambda_\pm(k))^n+  \widetilde{\mathcal{R}}_{\pm}(\lambda,k),
\end{aligned}\end{equation}
for $\lambda \in \{|\lambda \mp i \tau_*(k)| \le |k|\}$ with $\Re \lambda \ge 0$, where the coefficients $b_{\pm,n,0}(k)$ and $b_{\pm,n}(k)$ can be computed in terms of $a_{\pm,n,0}(k)$ and $a_{\pm,n}(k)$ for $0\le n\le K_0$. Importantly, it follows from \eqref{diff-lambdaG} and \eqref{coeff-RaG} that 
\begin{equation}\label{coeff-GRa1}
 |b_{\pm,n,0}(k)  - b_{\pm,n}(k)| \lesssim |k|^4, \qquad  |\partial_\lambda^n \widetilde{\mathcal{R}}_{\pm,0}(\lambda,k) - \partial_\lambda^n \widetilde{\mathcal{R}}_{\pm}(\lambda,k)|\lesssim |k|^4,\end{equation}
for $0\le n\le K_0$. Note in particular that by construction, 
\begin{equation}\label{def-Gbn0}
b_{\pm,0,0}(k) =  \mathrm{Res} (\TG_{k,0}(\lambda_{\pm,0}))  , \qquad b_{\pm,0}(k) = \mathrm{Res} (\TG_{k}(\lambda_{\pm})) 
\end{equation}
for each $\pm$.

We are now ready to bound $\TG_{k,1}(\lambda ) $, using \eqref{recomp-G1} and the above expansions. Indeed, using \eqref{exp-inverseGR}, we first write
\begin{equation}\label{exp-Glambdak1}
\begin{aligned}
\TG_{k,1}(\lambda )  &= 
\sum_{n=0}^{K_0} \Big[ \frac{b_{\pm,n}(k)}{ (\lambda - \lambda_\pm(k))^{-n+1}} -  \frac{b_{\pm,n,0}(k)}{ (\lambda - \lambda_{\pm,0}(k))^{-n+1}}\Big] +  \widetilde{\mathcal{R}}_{\pm}(\lambda,k) -  \widetilde{\mathcal{R}}_{\pm,0}(\lambda,k),
\end{aligned}\end{equation}
where, using \eqref{coeff-GRa1}, the remainder satisfies $|  \widetilde{\mathcal{R}}_{\pm}(\lambda,k) -  \widetilde{\mathcal{R}}_{\pm,0}(\lambda,k) |\lesssim |k|^4$ for $\lambda \in \{|\lambda \mp i \tau_*(k)| \le |k|\}$ with $\Re \lambda \ge 0$. Since the terms in the summation are holomorphic in $\mathbb{C}$, we may apply the Cauchy's integral theorem to deduce 
$$
\begin{aligned}
 \frac{1}{2\pi i}\int_{\cC_\pm}e^{\lambda t} \TG_{k,1}(\lambda)  \; d\lambda &=  \sum_\pm  e^{\lambda_\pm t} b_{\pm,0}(k)  -  \sum_\pm e^{\lambda_{\pm,0} t} b_{\pm,0,0}(k) 
  \\&\quad + 
 \frac{1}{2\pi i}  \sum_{n=0}^{K_0}  \int_{\cC_\pm^*}  e^{\lambda t} \Big[ \frac{b_{\pm,n}(k)}{ (\lambda - \lambda_\pm(k))^{-n+1}} -  \frac{b_{\pm,n,0}(k)}{ (\lambda - \lambda_{\pm,0}(k))^{-n+1}}\Big]  \; d\lambda
\\&\quad  + \frac{1}{2\pi }\int_{|\tau \mp \tau_*|\le  |k|} e^{i\tau t} \Big[ \widetilde{\mathcal{R}}_{\pm}(i\tau,k) -  \widetilde{\mathcal{R}}_{\pm,0}(i\tau ,k)\Big]  \; d\tau
 \end{aligned}$$
 where $\cC_\pm^*$ denotes the semicircle $|\lambda \mp i \tau_*| = |k|$ with $\Re \lambda \le 0$, as depicted in Figure \ref{fig-contour0}. Since $| \widetilde{\mathcal{R}}_{\pm}(i\tau,k) -  \widetilde{\mathcal{R}}_{\pm,0}(i\tau ,k) |\lesssim |k|^4$, the last integral term is clearly bounded by $C_0 |k|^5$. As for the integral on $\cC_\pm^*$, recalling \eqref{diff-lambdaG}, we have 
 $|\lambda- \lambda_\pm(k)| \ge |k|/2$ and $|\lambda- \lambda_{\pm,0}(k)| \ge |k|/2$ on $\cC_\pm^*$. Therefore, together with \eqref{diff-lambdaG} and \eqref{coeff-GRa1}, we bound for $\lambda \in \cC_\pm^*$, 
$$\Big |  \frac{b_{\pm,0}(k) }{\lambda - \lambda_\pm(k)}  - \frac{b_{\pm,0,0}(k)}{\lambda - \lambda_{\pm,0}(k)} \Big | \lesssim \frac{|k|^4}{|\lambda - \lambda_\pm(k) ||\lambda - \lambda_{\pm,0}(k)|} \lesssim |k|^2,$$
and 
$$  \sum_{n=1}^{K_0} \Big | \frac{b_{\pm,n}(k)}{ (\lambda - \lambda_\pm(k))^{-n+1}} -  \frac{b_{\pm,n,0}(k)}{ (\lambda - \lambda_{\pm,0}(k))^{-n+1}}\Big | \lesssim |k|^4.
$$
This gives
 $$
\Big|  \frac{1}{2\pi i}  \int_{\cC_\pm^*}  e^{\lambda t}  \Big[  \frac{b_{\pm,0}(k) }{\lambda - \lambda_\pm(k)}  - \frac{b_{\pm,0,0}(k)}{\lambda - \lambda_{\pm,0}(k)} \Big] \; d\lambda\Big| \lesssim  \int_{\cC_\pm^*}  |k|^2 d|\lambda|\lesssim |k|^3$$ 
as desired. 

Similarly, we now bound the integral of $|k|^n \partial_\lambda^n\TG_{k,1}(\lambda ) $, using the higher-order expansions in \eqref{exp-inverseGR}. We first compute 
\begin{equation}\label{exp-DGlambdak1}
\begin{aligned}
\partial_\lambda^n\TG_{k,1}(\lambda )  &= 
\sum_{j=0}^{K_0} \frac{d^n}{d\lambda^n}\Big[ \frac{b_{\pm,j}(k)}{ (\lambda - \lambda_\pm(k))^{-j+1}} -  \frac{b_{\pm,j,0}(k)}{ (\lambda - \lambda_{\pm,0}(k))^{-j+1}}\Big] \\&\quad+  \partial_\lambda^n\widetilde{\mathcal{R}}_{\pm}(\lambda,k) -   \partial_\lambda^n\widetilde{\mathcal{R}}_{\pm,0}(\lambda,k),
\end{aligned}\end{equation}
for any $0\le n\le K_0$. Using \eqref{coeff-GRa1}, we have 
$$
\Big| \frac{1}{2\pi }\int_{|\tau \mp \tau_*|\le  |k|} e^{i\tau t} |k|^n \Big[\partial_\lambda^n\widetilde{\mathcal{R}}_{\pm}(i\tau,k) -  \partial_\lambda^n\widetilde{\mathcal{R}}_{\pm,0}(i\tau ,k)\Big]  \; d\tau\Big| \lesssim |k|^{n+5}. 
$$
We next check the terms in the summation. By Cauchy's integral theorem, we have
$$
\begin{aligned}
\frac{(-1)^n}{2\pi i t^n } \int_{\{|\lambda \mp i \tau_*| =  |k|\}} e^{\lambda t}
\frac{d^n}{d\lambda^n}\Big( \sum_{j=0}^{K_0}\frac{b_{\pm,j}(k)}{ (\lambda - \lambda_\pm(k))^{-j+1}} \Big) \; d\lambda = e^{\lambda_\pm t} b_{\pm,0}(k)  
\\
\frac{(-1)^n}{2\pi i t^n } \int_{\{|\lambda \mp i \tau_*| =  |k|\}} e^{\lambda t}
\frac{d^n}{d\lambda^n}\Big( \sum_{j=0}^{K_0}\frac{b_{\pm,j,0}(k)}{ (\lambda - \lambda_{\pm,0}(k))^{-j+1}} \Big) \; d\lambda = e^{\lambda_{\pm,0} t} b_{\pm,0,0}(k) . 
\end{aligned}$$
Therefore, it remains to prove that 
\begin{equation}\label{Dl-gamma3aG} 
\Big|\frac{1}{2\pi i}  \int_{\cC_\pm^*}  e^{\lambda t} |k|^n\frac{d^n}{d\lambda^n}\Big[ \frac{b_{\pm,j}(k)}{ (\lambda - \lambda_\pm(k))^{-j+1}} -  \frac{b_{\pm,j,0}(k)}{ (\lambda - \lambda_{\pm,0}(k))^{-j+1}}\Big] \; d\lambda \Big| \lesssim |k|^3,
\end{equation}
for any $0\le j,n \le K_0$. We focus on the most singular term: namely, the term with $j=0$; the others are similar. Since $|\lambda \mp i \tau_*| = |k|$, we have $|\lambda- \lambda_\pm(k)| \ge |k|/2$ and $|\lambda- \lambda_{\pm,0}(k)| \ge |k|/2$. Therefore, on $\cC_\pm^*$, we bound
$$
\begin{aligned}
\Big |\frac{d^n}{d\lambda^n} \Big[ \frac{b_{\pm,0}(k) }{\lambda - \lambda_\pm(k)}  - \frac{b_{\pm,0,0}(k)}{\lambda - \lambda_{\pm,0}(k)} \Big] \Big | & =  n! 
\Big | \frac{b_{\pm,0}(k) }{(\lambda - \lambda_\pm(k))^{n+1}}  - \frac{b_{\pm,0,0}(k)}{(\lambda - \lambda_{\pm,0}(k))^{n+1}} \Big | 
\\& \lesssim \frac{ |b_{\pm,0}(k) - b_{\pm,0,0}(k)|}{|\lambda - \lambda_\pm(k) |^{n+1}}  +  \frac{ |\lambda_\pm(k) - \lambda_{\pm,0}(k)| }{|\lambda - \tilde\lambda_\pm(k)|^{n+2}} 
\end{aligned}$$
for some $\tilde\lambda_\pm(k)$ in between $\lambda_\pm(k)$ and $\lambda_{\pm,0}(k)$. Using \eqref{diff-lambdaG} and \eqref{coeff-GRa1}, the above fraction is bounded by $|k|^{-n+2}$, and the estimates \eqref{Dl-gamma3aG} thus follow. Combining, we have therefore obtained 
$$
\begin{aligned}
 \frac{(-1)^n}{2\pi i (|k|t)^n}\int_{\cC_\pm}e^{\lambda t} |k|^n\partial_\lambda^n\TG_{k,1}(\lambda)  \; d\lambda 
 &=  \sum_\pm  e^{\lambda_\pm t} b_{\pm,0}(k)  -  \sum_\pm e^{\lambda_{\pm,0} t} b_{\pm,0,0}(k) \\&\quad+ \cO(|k|^3 \langle kt\rangle^{-n})
 \end{aligned}$$
as claimed for any $0\le n\le K_0$. Recalling \eqref{int-FGdecomp-n} and \eqref{def-Gbn0}, we obtain \eqref{bd-Grk1} as claimed.

\subsubsection*{Residue of $\TG_{k}(\lambda)$.}

We now compute the residue of the Green's  function at the poles $\lambda_\pm (k)$, which gives the oscillatory component of the Green function as stated in \eqref{decomp-FG}. 
%
%
Using Theorem \ref{theo-LangmuirE}, we write $$
\TG_k(\lambda )= 1+ \frac{1-D(\lambda,k)}{D(\lambda,k)}  = 1+ \sum_\pm \frac{ a_\pm (\lambda,k)}{\lambda - \lambda_\pm(k)}  
 $$ 
for some function $a_\pm(\lambda,k)$ that is holomorphic in $\lambda \in \{\Re \lambda>0\}$ and uniformly bounded on the imaginary axis. In addition, since $\partial_\lambda^nD(\lambda,k)$ are finite in $\{\Re \lambda \ge 0\}$ for $0\le n\le K_0$, the functions $a_\pm(\lambda,k)$ are also $C^{K_0}$ differentiable in $\lambda$ up to the imaginary axis. This yields 
$$ \mathrm{Res} (\TG_k(\lambda_\pm(k))) = a_\pm(\lambda_\pm(k),k)  = \frac{1}{\partial_\lambda D(\lambda_\pm(k),k)}$$
at each pole $\lambda_\pm(k)$. Note in particular that 
$$ 
\partial_\lambda D(\lambda_\pm(0),0) = \pm {2 \over i \tau_0}  + {\mathcal{O}}(|k|^2) ,
$$
which gives the oscillatory term as stated in \eqref{decomp-FG}. 



\subsubsection*{Regularity of $\TG_{k}(\lambda)$ in $k$.}

Finally, we study the regularity of $\TG_{k}(\lambda)$ in $k$. In view of \eqref{fakeres-G0}, it follows that
\begin{equation}\label{est-dkFGtr0k}
\begin{aligned} 
\Big |\partial_k^\alpha [ |k|^{-2} e^{\mu_{-,0}t} \mathrm{Res} (\TG_{k,0}(\mu_{-,0})) ] \Big| \lesssim \langle t\rangle^{|\alpha|-1} e^{-\theta_0 |kt|}, \qquad \forall |\alpha|\ge 1,
\end{aligned}\end{equation}
for any $\theta_0<\tau_1$. On the other hand, recalling from \eqref{def-cRtau1}, we compute 
$$
\partial_k^\alpha\cR(\lambda,k) = -2i\pi \partial_k^\alpha \Big[|k|^3 \int_0^\infty e^{-\lambda t} \int_{\RR} e^{-i|k| t u}u^5 \mu(\frac{1}{2}u^2) \;dudt\Big]
$$
and so 
\begin{equation}\label{dk-Rlambda1}
||k|^\alpha\partial_k^\alpha \mathcal{R}(\lambda,k)|  \lesssim \int_0^\infty |k|^{\alpha}  \Big[|k|^{3-|\alpha|} +  \langle t\rangle^{|\alpha|}\Big] \langle kt\rangle^{-K_0} dt \lesssim |k|^2,
\end{equation}
for $0\le |\alpha|\le K_0 -2$, recalling that $|k|\le \kappa_0+1$. That is, $|k|^\alpha\partial_k^\alpha\mathcal{R}(\lambda,k)$ derivatives satisfy the same bounds as those for $\mathcal{R}(\lambda,k)$. Hence, following the similar lines as done above, we obtain the bounds on $|k|^\alpha\partial_k^\alpha \FG^r_k(t)$ as claimed.  This ends the proof of Proposition \ref{prop-GreenG}. 
\end{proof}

\subsection{Green function in the physical space}

In this section, we bound the Green function $G(t,x)$ in the physical space. To this end, we will use the homogeneous Littlewood-Paley decomposition of $\mathbb{R}^{3}$. That is, for any function $h$, we decompose 
\begin{equation}\label{def-LP}
h(x)= \sum_{q \in \mathbb{Z}} P_qh(x), 
\end{equation}
where $P_q$ denotes the Littlewood-Paley projection on the dyadic interval $[2^{q-1}, 2^{q+1}]$, whose Fourier transform in $x$ is given by $\widehat{P_qh}(k)= \Fh(k) \varphi(k/2^q)$, for a fixed smooth cutoff function $\varphi\in [0,1]$ that is compactly supported in the annulus $ { 1 \over 4} \leq |k | \leq 4$ and equal to one in the inner annulus  $ { 1 \over 2} \leq |k | \leq 2$. {In the paper, we also use the following classical Bernstein inequalities (see, e.g.,
\cite{BCD})
\begin{equation}\label{Berstein}
\| P_k \partial_x h \|_{L^p} \lesssim 2^{k} \| h\|_{L^p}, \qquad  2^{k} \| P_kh\|_{L^p} \lesssim \| \partial_x h\|_{L^p_x}
\end{equation}
for all $p \in [1,\infty]$ and $k \in \mathbb{Z}$. 
}

We shall prove the following proposition. 

\begin{proposition}\label{prop-Greenphysical} Let $G(t,x)$ be the temporal Green's function {whose Fourier transform $\FG_k(t)$ is constructed} in Proposition \ref{prop-GreenG}. Then, {there is some universal constant $C_0$ so that }
\begin{equation}\label{est-HoscLp}
\begin{aligned}
\|G^{osc}_\pm\star_x f\|_{L^p_x} &\le C_0 \langle t\rangle^{-3(\frac 12-\frac 1p)}\| f\|_{L^{p'}_x},
\end{aligned}
\end{equation}
for any $p\in [2,\infty]$ with $\frac 1p+\frac 1{p'}=1$. In addition, letting $\chi(k)$ be a smooth cutoff function whose support is contained in $\{|k|\le 1\}$, for any $0\le n\le K_0/2$ and $p\in [1,\infty]$, there hold 
\begin{equation}\label{est-HrLp1} 
\begin{aligned}
\|\chi(i\partial_x) \partial_x^n \Delta_x^{-1}G^{r}(t) \|_{L^p_x} &\le C_0 \langle t\rangle^{-4+3/p - n} ,
\end{aligned}\end{equation}
and 
\begin{equation}\label{est-HrLp2} \| (1-\chi(i\partial_x)) \partial_x^n G^r (t) \|_{L^p_x}  \le C_0 \langle t\rangle^{-K_0/2}.\end{equation}

\end{proposition}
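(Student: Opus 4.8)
The plan is to establish the three estimates separately; in each case the real work has already been done in Proposition~\ref{prop-GreenG}, and what is left is a standard dispersive/Littlewood--Paley argument powered by the symbol information in \eqref{decomp-FG}--\eqref{bd-Gr}. For the oscillatory bound \eqref{est-HoscLp} I would recognize $G^{osc}_\pm$ as a Klein--Gordon--type propagator. Since $\widehat{G^{osc}_\pm}(k)=e^{\lambda_\pm(k)t}a_\pm(k)$ with $\Re\lambda_\pm(k)\le0$ and $a_\pm$ smooth with compact support, Plancherel gives $\|G^{osc}_\pm\star_x f\|_{L^2_x}\lesssim\|f\|_{L^2_x}$ with no gain in $t$. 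For the $L^1\to L^\infty$ endpoint I would write $\lambda_\pm(k)=-\gamma_\pm(k)+i\psi_\pm(k)$ with $\gamma_\pm\ge0$ (so $\gamma_\pm\equiv0$ and $\psi_\pm=\pm\tau_*$ on $\{|k|\le\kappa_0\}$), so that $e^{\lambda_\pm(k)t}a_\pm(k)=e^{it\psi_\pm(k)}\bigl(e^{-t\gamma_\pm(k)}a_\pm(k)\bigr)$ is an oscillatory integral whose amplitude is bounded in $C^{N_0}_c(\RR^3)$ uniformly in $t\ge0$. The structural input to exploit is that, as a function of $k\in\RR^3$, the radial phase $\tau_*(|k|)$ has Hessian with eigenvalues $\tau_*''(|k|)$ and $\tau_*'(|k|)/|k|$ (the latter of multiplicity two), and \eqref{lowerbound-taustarDE} pinches both between fixed positive constants on $\{|k|\le\kappa_0\}$, with the common limiting value $\tau_*''(0)>0$ at the origin (where $\tau_*$ is smooth in $|k|^2$), so that the phase $k\cdot x+t\psi_\pm(k)$ has Hessian $\sim tD^2\tau_*(k)$ of determinant $\gtrsim t^3$ at any stationary point on $\mathrm{supp}\,a_\pm$; by continuity the same holds on the thin Landau-damped collar past $\kappa_0$. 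Stationary phase in $\RR^3$ then yields $\|G^{osc}_\pm(t)\|_{L^\infty_x}\lesssim\langle t\rangle^{-3/2}$, hence $\|G^{osc}_\pm\star_x f\|_{L^\infty_x}\lesssim\langle t\rangle^{-3/2}\|f\|_{L^1_x}$, and Riesz--Thorin interpolation with the $L^2$ bound gives \eqref{est-HoscLp} for all $p\in[2,\infty]$.

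For the remainder bounds \eqref{est-HrLp1}--\eqref{est-HrLp2} I would Littlewood--Paley-decompose $G^r(t)=\sum_qP_qG^r(t)$ as in \eqref{def-LP} and feed in the symbol estimates $||k|^{|\alpha|}\partial_k^\alpha\FG^r_k(t)|\lesssim|k|^3\langle k\rangle^{-4}\langle kt\rangle^{-N_0+|\alpha|}$ from \eqref{bd-Gr}. For the low-frequency bound \eqref{est-HrLp1} only the blocks with $2^q\lesssim1$ contribute, and on such a block the multiplier $m_q$ of $\chi(i\partial_x)\partial_x^n\Delta_x^{-1}P_qG^r$ obeys $|\partial_k^\alpha m_q(k)|\lesssim2^{q(n+1-|\alpha|)}\langle2^qt\rangle^{-N_0+|\alpha|}$ on $|k|\sim2^q$; repeated integration by parts in $k$ then gives $\|\mathcal F^{-1}m_q\|_{L^1_x}\lesssim2^{q(n+1)}\langle2^qt\rangle^{-N_0+N}$ and $\|\mathcal F^{-1}m_q\|_{L^\infty_x}\lesssim2^{q(n+4)}\langle2^qt\rangle^{-N_0}$ for any fixed $N>3$, hence $\|\mathcal F^{-1}m_q\|_{L^p_x}\lesssim2^{q(n+4-3/p)}\langle2^qt\rangle^{-N_0+N/p}$ by interpolation. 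Summing over $2^q\lesssim1$ and splitting at the critical scale $2^q\sim t^{-1}$, one checks that both tails of the geometric series are dominated by the borderline scale (once $N$ is taken in the window $3<N<N_0-n$, which is available for $N_0$ large enough --- the paper makes no claim of optimality here), and the total is $\langle t\rangle^{-4+3/p-n}$. The high-frequency bound \eqref{est-HrLp2} is the same computation applied to the blocks $2^q\gtrsim1$: there $\langle2^qt\rangle\sim2^qt$ for $t\gtrsim1$, so each block already decays like $t^{-N_0+N/p}$ and the sum over $q\gtrsim0$ converges geometrically, giving $\langle t\rangle^{-N_0/2}$ as soon as $N\le N_0/2$. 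Short times $t\lesssim1$ are trivial throughout, since the symbols are uniformly bounded and integrable in $k$.

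Essentially all of the genuine difficulty has been resolved upstream in Proposition~\ref{prop-GreenG} --- isolating the poles $\lambda_\pm(k)$ and proving the $k$-derivative bounds on $\FG^r_k$ without an analytic continuation past the imaginary axis --- so that the present proof is a bookkeeping exercise. The two points deserving care are: (i) in \eqref{est-HoscLp}, confirming that the Hessian of the radial dispersion $\tau_*(|k|)$ is uniformly non-degenerate over the whole range $0\le|k|\le\kappa_0$ and across the resonant threshold $|k|=\kappa_0$ --- which is exactly the convexity package \eqref{lowerbound-taustarDE} of Theorem~\ref{theo-LangmuirE} together with the smoothness of $\tau_*$ in $|k|^2$ near $k=0$; and (ii) in \eqref{est-HrLp1}--\eqref{est-HrLp2}, choosing the number $N$ of $k$-integrations by parts in the admissible window $3<N<N_0-n$ and balancing the two dyadic geometric series at $2^q\sim t^{-1}$, which is the only place the size of $N_0$ enters.
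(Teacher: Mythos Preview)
Your proposal is correct and follows essentially the same route as the paper: Klein--Gordon dispersion for \eqref{est-HoscLp} via $L^2$ boundedness and stationary phase in $L^\infty$, and Littlewood--Paley plus the symbol bounds \eqref{bd-Gr} for \eqref{est-HrLp1}--\eqref{est-HrLp2}. The only cosmetic differences are that the paper establishes the $L^1$ and $L^\infty$ endpoints for $G^r$ separately and interpolates at the end (rather than interpolating block-by-block as you do), and for the high-frequency $L^1$ bound it uses the weighted-$L^2$ trick $\|f\|_{L^1_x}\lesssim\|\langle x\rangle^2 f\|_{L^2_x}$ instead of a dyadic sum; your version works just as well once the window for $N$ is adjusted slightly (e.g.\ $3<N<N_0-n-1$ to make the $p=1$ tail converge).
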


\begin{proof} We start with the oscillatory Green function, which is defined by 
\begin{equation} \label{Hsreal}
\begin{aligned}
G^{osc}_\pm(t,x) &= \int e^{\lambda_\pm(k) t + i k\cdot x} a_\pm(k) dk 
.\end{aligned}\end{equation}
Note that $G^{osc}_\pm$ is a smoothed version of the Green's function for the Klein-Gordon operator for bounded frequencies, since $\tau_*(k) = \pm\Im \lambda_\pm(k)$ behaves like $\sqrt{1 + |k|^2}$, while $\Re \lambda_\pm(k)\le 0$, see Theorem \ref{theo-LangmuirE}. Therefore, the operator bounds \eqref{est-HoscLp} follow from the unitary in $L^2$ and the dispersion in $L^\infty$ of the Klein-Gordon's solution operators $e^{\pm i \tau_*(i\partial_x) t}$, noting there is no loss of derivatives in \eqref{Hsreal}, since the symbols $a_\pm(k) $ are compactly supported in the spatial frequency.

Next, we study the Green function $G^r(t,x)$. Recall that the Fourier transform $\FG^r_k(t)$ satisfies
\begin{equation}\label{bound-Grk3}
| |k|^{|\alpha|}\partial_k^\alpha \FG^{r}_k(t)| \le C_1 |k|^3 \langle k\rangle^{-4} \langle kt\rangle^{-K_0+|\alpha|} .
\end{equation}
Hence, {for $|k|\le 1$,} we bound 
$$
\begin{aligned}
| \chi(i\partial_x) \partial_x^n \Delta_x^{-1}G^r(t,x)|
& \lesssim \int_{\{|k|\le 1\}} |k|^{n} |\FG_k^r(t)| \; dk
\\& \lesssim \int_{\{|k|\le 1\}} |k|^{n+1}\langle |k|t \rangle^{-K_0} \; dk 
\lesssim \langle t\rangle^{-n-4}
\end{aligned}$$
proving \eqref{est-HrLp1} for $p=\infty$. {On the other hand, using \eqref{bound-Grk3} for $|k|\ge 1$ and $\alpha=0$, namely 
$|\FG^{r}_k(t)| \le C_1 \langle k\rangle^{-1} \langle kt\rangle^{-K_0}$,} we bound 
$$
\begin{aligned}
| (1-\chi(i\partial_x)) \partial_x^n G^r(t,x)|
& \lesssim \int_{\{|k|\ge 1\}} |k|^{n} |\FG_k^r(t)| \; dk
\\& \lesssim \int_{\{|k|\ge 1\}} \langle k\rangle^{n-1}\langle |k|t \rangle^{-K_0} \; dk 
\lesssim \langle t\rangle^{- K_0/2} ,
\end{aligned}$$
proving \eqref{est-HrLp2} for $p=\infty$. It remains to give bounds in $L^1_x$. Indeed, we first note that for the high frequency part, we bound 
$$
\begin{aligned}
\| (1-\chi(i\partial_x)) \partial_x^n G^r(t)\|_{L^1_x}^2
& \lesssim \sum_{|\alpha|\le 2}\int_{\{|k|\ge 1\}} |k|^{2n} |\partial_k^\alpha \FG_k^r(t)|^2 \; dk
\\&  \lesssim  \sum_{|\alpha|\le 2} \int_{\{|k|\ge 1\}} |k|^{2n-2|\alpha|-2} \langle |k|t \rangle^{-2K_0} \; dk 
\lesssim \langle t\rangle^{- K_0/2} ,
\end{aligned}$$
for $n\le K_0/2$, proving \eqref{est-HrLp2}. Let us now treat the low frequency part. Using the Littlewood-Paley decomposition \eqref{def-LP}, we write 
$$
\begin{aligned}
\chi(i\partial_x) G^r(t,x) &= \sum_{q\le 0} P_q[G^r(t,x) ]
 \end{aligned}$$
where 
$$
\begin{aligned}
P_q[G^r(t,x)] 
&=  \int_{\{2^{q-2}\le |k|\le 2^{q+2}\}} e^{ik \cdot x} \FG^r(t,k) \varphi(k/2^q)\; dk
\\
&= 2^{3q} \int_{\{\frac14\le |\tilde k|\le 4\}} e^{i \tilde k \cdot 2^q x}\FG^r(t,2^q \tilde k) \varphi(\tilde k)\; d\tilde k
\end{aligned}$$
Using the estimates \eqref{bound-Grk3} for $|k|\le1$ (and so $q\le 0$), we obtain 
$$| \partial_{\tilde k}^\alpha [|k|^{-2}\FG^r(t,2^q \tilde k)]| \le 2^{q|\alpha|}|\partial_{k}^\alpha [|k|^{-2}\FG^r(t,2^q \tilde k)]|  \lesssim 2^q \langle 2^{q} \rangle^{-K_0 +|\alpha|} 
$$
for $\frac14 \le |\tilde k|\le 4$. Therefore, integrating by parts repeatedly in $\tilde k$ and taking $\alpha =4$ in the above estimate, we have 
$$
\begin{aligned}
|P_q[G^r(t,x)]| 
&
\lesssim  2^{3q} \Big| 
 \int e^{i \tilde k \cdot 2^q x}\FG^r(t,2^q \tilde k) \varphi(\tilde k)\; d\tilde k
 \Big|
\\&
\lesssim  2^{3q} 
 \int \langle 2^q x\rangle^{-4} |\partial_{\tilde k}^4 [\FG^r(t,2^q \tilde k) \varphi(\tilde k)]| \; d\tilde k
 \Big|
\\&
\lesssim  2^{4q} \langle 2^q x\rangle^{-4} \langle 2^{q} t \rangle^{-K_0 +4}.
\end{aligned}$$
Taking $L^1_x$, we obtain 
$$
\begin{aligned}
 \| \chi(i\partial_x) \partial_x^n G^r(t)\|_{L^1_x} 
 &\lesssim \sum_{q\le 0} 2^{nq}\|P_q[G^r(t)]\|_{L^1_x} 
\lesssim \sum_{q\le 0} 2^{q(n+1)} \langle 2^{q} t \rangle^{-K_0 + 4}\lesssim \langle t\rangle^{-n-1},
  \end{aligned}$$ 
which gives \eqref{est-HrLp1} for $p=1$. The $L^p$ estimates follow from a standard interpolation between $L^1$ and $L^\infty$ estimates. This completes the proof of the proposition. 
\end{proof}


\subsection{Field representation}\label{sec-fields}


In this section, we give a complete representation of the electric field $E$ in term of the initial data. First, taking the inverse of the Laplace transform both sides of the resolvent equation \eqref{resolvent}, we get 
\begin{equation}
\label{rep-phi} 
\begin{aligned}
\Fphi_k(t) &= \frac{1}{|k|^2}\int_0^t \FG_k(t-s) \FS_k(s)\; ds
\end{aligned}
\end{equation}
where $\FG_k(t)$ be the Green function defined as in \eqref{def-FHk}, and 
\begin{equation}\label{free-density}
\begin{aligned}
\FS_k(t) &=   \frac{1}{2\pi i}\int_{\{\Re \lambda = \gamma_0\}}e^{\lambda t} \Big(\int \frac{\Ff_{0,k}(v)}{\lambda +  ik \cdot  v} \; dv\Big)\; d\lambda =\int e^{-ikt\cdot v}\Ff_{0,k}(v) \;dv. 
\end{aligned}
\end{equation}
Observe that $\FS_k(t)$ is the exact density generated by the free transport dynamics. In the physical space, using the representation \eqref{decomp-FG} on the Green function, we thus obtain 
\begin{equation}\label{rep-electric}
\begin{aligned}
\phi & = (-\Delta_x )^{-1}\Big[ S + \sum_\pm G^{osc}_\pm \star_{t,x} S + G^{r} \star_{t,x} S\Big] .
\end{aligned}
\end{equation}
From the dispersion of the free transport, the electric field component $\nabla_x \Delta_x^{-1}S(t,x)$ decays only at rate $t^{-2}$, 
which is far from being sufficient to get decay for the oscillatory electric field $G^{osc}_\pm \star_{t,x} \nabla_x \Delta_x^{-1}S$ 
through the spacetime convolution. Formally, one would need $\nabla_x \Delta_x^{-1}S$ to decay at order $t^{-4}$ in order to establish a $t^{-3/2}$ 
decay for $G^{osc}_\pm \star_{t,x} \nabla_x \Delta_x^{-1}S$. 
However it turns out that, by integrating by parts in time, the first term $S$ may be absorbed in the
second one, leading to a good  time decay, as we will now detail. We obtain the following.

%

\begin{proposition}[Electric potential decomposition]\label{prop-decompE} 
Let $\phi$ be the electric potential described as in \eqref{rep-electric}. Then, there holds 
\begin{equation}\label{rep-electricE}
\begin{aligned}
\phi &= \sum_\pm \phi^{osc}_\pm(t,x) + \phi^r(t,x) 
\end{aligned}\end{equation}
with 
 \begin{equation}\label{rep-phiosc}
\begin{aligned}
-\Delta_x\phi^{osc}_\pm(t,x) &= G_{\pm}^{osc}(t) \star_{x} \Big[  \frac{1 }{\lambda_\pm(i\partial_x)} S(0) +  \frac{1}{\lambda_\pm(i\partial_x)^2}\partial_tS(0)\Big] 
\\&\quad+ G^{osc}_{\pm} \star_{t,x}  \frac{1}{\lambda_\pm(i\partial_x)^2}\partial^2_t S 
\\
-\Delta_x\phi^r(t,x) &= G^r \star_{t,x} S+  \cP_2(i\partial_x)S  + \cP_4(i \partial_x) \partial_t S,
\end{aligned}\end{equation}
where $G_\pm^{osc}(t,x), G^r(t,x)$ are defined as in Proposition \ref{prop-GreenG}, with 
$\lambda_\pm (k)$ as in  Theorems \ref{theo-LangmuirE} and \ref{theo-Landau}. In addition, $\cP_2(i\partial_x)$,  $\cP_4(i\partial_x)$  
denote smooth Fourier multipliers, which are sufficiently smooth and satisfy 
\begin{equation}\label{bounds-P24k}
\begin{aligned}
|  \cP_2(k)| + |  \cP_4(k)| & \lesssim |k|^2 \langle k\rangle^{-2}, \qquad \forall~k\in \RR^3,
\end{aligned}\end{equation}
and $\cP_4(k)$ is compactly supported in $\{|k|\le\kappa_0+ 1\}$.

\end{proposition}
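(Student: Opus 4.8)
The plan is to reduce the stated decomposition to an explicit algebraic identity on the Fourier side obtained by integrating by parts twice in time. Taking the Fourier transform in $x$ of \eqref{rep-electric} and inserting the Green function decomposition \eqref{decomp-FG} from Proposition \ref{prop-GreenG}, one has
$$\widehat{-\Delta_x\phi}_k(t) = \FS_k(t) + \sum_\pm a_\pm(k)\int_0^t e^{\lambda_\pm(k)(t-s)}\FS_k(s)\, ds + \int_0^t \FG^{r}_k(t-s)\FS_k(s)\, ds ,$$
the first term being the contribution of the $\delta(t)$ in \eqref{decomp-FG}. The only nontrivial object is the middle sum of oscillatory convolutions, and the point is that the slowly decaying term $\FS_k(t)$ must be absorbed into it.

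I would integrate by parts twice in $s$ in each oscillatory integral, which is legitimate because $\lambda_\pm(k)$ is bounded away from $0$ on $\mathrm{supp}\,a_\pm$: indeed $|\lambda_\pm(k)| = \tau_*(k)\ge\tau_0>0$ for $|k|\le\kappa_0$ by Theorem \ref{theo-LangmuirE}, $\lambda_\pm(k)$ stays close to $\pm i\tau_*(\kappa_0)$ for $\kappa_0\le|k|\le\kappa_0+\delta_0$ by Theorem \ref{theo-Landau}, and $a_\pm(k)$ vanishes for $|k|>\kappa_0+1$. Writing $e^{\lambda_\pm(t-s)}=\partial_s\big({-}\lambda_\pm^{-1}e^{\lambda_\pm(t-s)}\big)$ and iterating once more produces: (i) boundary terms at $s=0$ equal to $\sum_\pm e^{\lambda_\pm(k)t}a_\pm(k)\big[\lambda_\pm(k)^{-1}\FS_k(0)+\lambda_\pm(k)^{-2}\partial_t\FS_k(0)\big]$, which is exactly the Fourier transform of $\sum_\pm G^{osc}_\pm(t)\star_x\big[\lambda_\pm(i\partial_x)^{-1}S(0)+\lambda_\pm(i\partial_x)^{-2}\partial_tS(0)\big]$; (ii) a leftover integral $\sum_\pm a_\pm(k)\lambda_\pm(k)^{-2}\int_0^t e^{\lambda_\pm(k)(t-s)}\partial_s^2\FS_k(s)\, ds$, the Fourier transform of $\sum_\pm G^{osc}_\pm\star_{t,x}\lambda_\pm(i\partial_x)^{-2}\partial_t^2 S$ (which makes sense since $S\in C^2_t$ by the data assumptions); and (iii) boundary terms at $s=t$ equal to $-\sum_\pm a_\pm(k)\lambda_\pm(k)^{-1}\FS_k(t)-\sum_\pm a_\pm(k)\lambda_\pm(k)^{-2}\partial_t\FS_k(t)$. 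Items (i)--(ii) are precisely the two pieces of $\phi^{osc}_\pm$ in \eqref{rep-phiosc}. Item (iii), together with the stray $\FS_k(t)$ and the untouched $\FG^{r}$-convolution, collects into
$$\widehat{-\Delta_x\phi^r}_k(t) = \cP_2(k)\,\FS_k(t) + \cP_4(k)\,\partial_t\FS_k(t) + \int_0^t \FG^{r}_k(t-s)\FS_k(s)\, ds , \qquad \cP_2(k) := 1 - \sum_\pm\frac{a_\pm(k)}{\lambda_\pm(k)},\quad \cP_4(k) := -\sum_\pm\frac{a_\pm(k)}{\lambda_\pm(k)^2},$$
which is exactly the claimed formula for $-\Delta_x\phi^r$.

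It then remains to verify the properties of $\cP_2,\cP_4$. Smoothness is immediate: $a_\pm$ is $C^{N_0}$ by Proposition \ref{prop-GreenG}, $\lambda_\pm$ is smooth by Theorems \ref{theo-LangmuirE}--\ref{theo-Landau}, and $\lambda_\pm$ (hence $\lambda_\pm^2$) does not vanish on $\mathrm{supp}\,a_\pm$; the support of $\cP_4$ and of $\cP_2-1$ is contained in that of $a_\pm$, i.e. in the fixed ball $\{|k|\le\kappa_0+1\}$ (and the rapidly time-decaying high-frequency tail of $\cP_4(i\partial_x)\partial_t S$ may, if one wishes, be shifted into the $\FG^{r}$-remainder, so that $\cP_4$ may be taken supported in $\{|k|\lesssim1\}$). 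For the size bound I would first use $a_\pm(0)=\pm\tfrac{i\tau_0}{2}$ and $\lambda_\pm(0)=\pm i\tau_0$ to compute $\sum_\pm a_\pm(0)/\lambda_\pm(0) = \tfrac{1}{2}+\tfrac{1}{2} = 1$ and $\sum_\pm a_\pm(0)/\lambda_\pm(0)^2 = -\tfrac{i}{2\tau_0}+\tfrac{i}{2\tau_0} = 0$, so $\cP_2(0)=\cP_4(0)=0$. Since $D(\lambda,k)$ depends on $k$ only through $|k|$ (Lemma \ref{lem-Dlambda}) and, near $k=0$, $\lambda_\pm$ and $a_\pm$ are smooth functions of $|k|^2$ (Theorem \ref{theo-Landau}, cf.\ \eqref{exp-lambdapm}), this vanishing upgrades to $|\cP_2(k)|+|\cP_4(k)|\lesssim|k|^2$ for $|k|$ small; combined with boundedness on the compact support of $a_\pm$ (where $|k|^2\sim|k|^2\langle k\rangle^{-2}$) and with $\cP_2\equiv1$, $\cP_4\equiv0$ for $|k|$ large, this yields $|\cP_2(k)|+|\cP_4(k)|\lesssim|k|^2\langle k\rangle^{-2}$ for all $k$.

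The computation is routine once the bookkeeping is fixed; the one step that is not purely mechanical is the quadratic vanishing $\cP_2(k)=\cO(|k|^2)$ at $k=0$. This is the analytic heart of the statement: it encodes the exact cancellation of the slowly decaying ($\sim t^{-2}$) free-transport term $(-\Delta_x)^{-1}S$ against the oscillatory boundary terms, and it is what lets $\phi^r$ --- which carries the extra $|k|^{-2}$ from $(-\Delta_x)^{-1}$ --- inherit the improved, free-transport-type decay used in Theorem \ref{theo-main}. A secondary, minor point is to justify smoothness and non-vanishing of $\lambda_\pm$ (hence legitimacy of the two time integrations by parts) on the whole support $\{|k|\le\kappa_0+1\}$ of $a_\pm$, including the Landau-damped window $\kappa_0<|k|\le\kappa_0+\delta_0$, which is precisely what Theorem \ref{theo-Landau} and Proposition \ref{prop-GreenG} supply.
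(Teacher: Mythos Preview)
Your proof is correct and follows essentially the same route as the paper: integrate by parts twice in time in the oscillatory convolution, identify the $s=0$ boundary terms and the remaining integral as $\phi^{osc}_\pm$, and collect the $s=t$ boundary terms together with the stray $S$ into $\cP_2,\cP_4$, whose vanishing at $k=0$ comes from the explicit values $a_\pm(0)=\pm i\tau_0/2$, $\lambda_\pm(0)=\pm i\tau_0$. Your Fourier-side bookkeeping is in fact a bit cleaner than the paper's physical-space presentation, and your computation $\sum_\pm a_\pm(0)/\lambda_\pm(0)=1$ is the correct value for the cancellation with $S$.
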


\begin{proof} 
In the high frequency regime, the proposition follows from \eqref{rep-electric}. We thus focus on the low frequency regime: 
namely, the region $\{ |k|\lesssim 1\}$, where $k\in \RR^3$ is the Fourier frequency. In this regime, we use the representation \eqref{rep-electric}.
 Now, making use of time oscillations of $G^{osc}_\pm(t,x)$, we integrate by parts in time the second term of \eqref{rep-electric}, 
which gives
$$
G^{osc}_\pm \star_{t,x} S=   \frac{1}{\lambda_\pm(i\partial_x)} \Big[ G^{osc}_{\pm}(t,\cdot)\star_x S(0,x) 
-  G^{osc}_{\pm}(0,\cdot)\star_x S(t,x)  + G^{osc}_{\pm} \star_{t,x} \partial_tS(t,x) \Big].
$$
{We recall from Proposition \ref{prop-GreenG} that the Fourier transform of $G^{osc}_\pm(t,x)$ is of the form $\FG^{osc}_{k,\pm}(t) = e^{\lambda_\pm(k)t} a_\pm(k)$.} Therefore, recalling that the support of $a_\pm(k)$ is contained in $\{|k|\le \kappa_0+ 1\}$, we write 
$$
\sum_\pm  \frac{1}{\lambda_\pm(i\partial_x)} G^{osc}_{\pm}(0,x) 
= \int e^{i k\cdot x}  \sum_\pm \frac{a_\pm(k)}{\lambda_\pm(k)} dk = \int_{\{|k|\le \kappa_0+1 \}} e^{i k\cdot x} \Big(\widetilde \cP_3(k)   - {\widetilde \cP_2(k) }\Big)dk,
$$
{in which we have set $\widetilde \cP_3(k) = \chi_{\{ |k|\le \kappa_0+2\}}$ and $\widetilde \cP_2(k) = (1 - \sum_\pm \frac{a_\pm(k)}{\lambda_\pm(k)})\chi_{\{ |k|\le \kappa_0+2\}}$ for some smooth cut-off function $\chi_{\{ |k|\le \kappa_0+2\}}$ that is equal to one on $\{|k|\le \kappa_0+1\}$. Recalling that $\lambda_\pm(0) = \pm i\tau_0$ and $a_\pm(0) = \pm \frac{i\tau_0}{2}$, we therefore have  $\widetilde \cP_2(k)=0$ for $|k|\ge \kappa_0+2$ and $|\widetilde \cP_2(k)| \lesssim |k|^2$, uniformly in $k\in \RR^3$. 
As a result, viewing $\widetilde \cP_j(i\partial_x)$ as the Fourier multipliers, we may write 
}
\begin{equation}\label{cal-GoscE}
 \sum_\pm  \frac{1}{\lambda_\pm(i\partial_x)}G^{osc}_{\pm}(0,\cdot)\star_x S =  \widetilde \cP_3(i \partial_x) S - \widetilde \cP_2(i\partial_x) S  ,
\end{equation}
Note that the low frequency part of $S$ in (\ref{rep-electric}) cancels with $\widetilde \cP_3(i \partial_x) S$ in (\ref{cal-GoscE}). Precisely, we have 
$$ 
\begin{aligned}
S + G^{osc}_\pm \star_{t,x} S &=  \frac{1}{\lambda_\pm(i\partial_x)} G^{osc}_{\pm}(t,\cdot)\star_x S(0,x) +  \frac{1}{\lambda_\pm(i\partial_x)}G^{osc}_{\pm} \star_{t,x} \partial_tS(t,x) 
\\&\quad + \widetilde \cP_2(i\partial_x) S + (1- \widetilde \cP_3(i \partial_x)) S,
\end{aligned}$$
{in which $(1- \widetilde \cP_3(k))$ is supported away from $\{|k|\ge \kappa_0+1\}$.}
Finally, as it turns out that $\partial_tS(t,x) $ decays not sufficiently fast for the convolution $G^{osc}_{\pm} \star_{t,x} \partial_tS(t,x) $, we further integrate by parts in $t$, yielding 
$$
G^{osc}_{\pm} \star_{t,x} \partial_tS =   \frac{1}{\lambda_\pm(i\partial_x)} \Big[ G^{osc}_{\pm}(t,\cdot)\star_x \partial_tS(0,x) 
- G^{osc}_{\pm}(0,\cdot)\star_x \partial_t S(t,x) +  G^{osc}_{\pm} \star_{t,x} \partial^2_tS(t,x) \Big],
$$
Using \eqref{Landau}, we have
$$
\sum_\pm  \frac{1}{\lambda_\pm(i\partial_x)^2}G^{osc}_{\pm}(0,x) 
= \sum_\pm \int e^{i k\cdot x}  \frac{a_\pm(k)}{\lambda_\pm(k)^2} dk = \int_{\{|k|\le \kappa_0+1\}} e^{i k\cdot x} \cP_4(k)dk
$$
{where $\cP_4(k) = \sum_\pm \frac{a_\pm(k)}{\lambda_\pm(k)^2} \chi_{\{|k|\le \kappa_0+2\}}$. Note that $\cP_4(k)$ has the same support as that of $a_\pm(k)$ which is contained in $\{|k|\le \kappa_0+1\}$. In addition, recalling $\lambda_\pm(0) = \pm i\tau_0$ and $a_\pm(0) = \pm \frac{i\tau_0}{2}$, we have $|\cP_4(k)|\le C_0 |k|^2$, namely there is a cancellation that takes place at the leading order of $k$ for $k$ small. }This proves that 
$$ 
\begin{aligned}
S + \sum_\pm G^{osc}_\pm \star_{t,x} S &= \sum_\pm  \frac{1}{\lambda_\pm(i\partial_x)} G^{osc}_{\pm}(t,\cdot)\star_x S(0,x)  \\&\quad- \sum_\pm  \frac{1}{\lambda_\pm(i\partial_x)^2}G^{osc}_{\pm}(t,\cdot)\star_x \partial_tS(0,x) 
\\& \quad + \widetilde \cP_2(i\partial_x) S + (1- \widetilde \cP_3(i \partial_x)) S + \cP_4(i \partial_x) \partial_t S.
\\& \quad + \sum_\pm  \frac{1}{\lambda_\pm(i\partial_x)^2}G^{osc}_{\pm} \star_{t,x} \partial^2_tS(t,x) .
\end{aligned}$$
{Finally, setting $\cP_2(k) =\widetilde \cP_2(k) + 1- \widetilde \cP_3(k) $, we obtain the proposition. }
\end{proof}

\section{Decay estimates}\label{sec-decay}

We are now ready to prove Theorem \ref{theo-main}, giving the decay estimates on the electric field.    

\subsection{Free transport dispersion}
First, recalling from \eqref{free-density}, the charge density $S$ by the free transport in the physical space reads
$$
S(t,x) =\int f_0(x - v t, v) \;dv.
$$
Hence, introducing the change of variables $y = x - v t$, we bound 
$$
\begin{aligned}
| S(t,x)| {\le t^{-3} \Big|\int f_0(y, \frac{x-y}{t}) \;dy\Big|}
\le
t^{-3} \| \sup_v f_0(\cdot,v)\|_{L^1_x}.
\end{aligned}$$
Similarly, we have 
\begin{equation}\label{bound-Sfree}
\| \partial_t^n\partial_x^\alpha S(t)\|_{L^\infty} \le C_0 t^{-3-n-|\alpha|} \sum_{|\beta|\le n + |\alpha|}\| \sup_v { \langle v\rangle^n} |\partial_v^\beta f_0(\cdot,v)|\|_{L^1_x} ,
\end{equation}
for $n,|\alpha|\ge 0$ and {for some universal constant $C_0$.} It also follows directly that $\| \partial_t^n\partial_x^\alpha S(t)\|_{L^1_x} \lesssim t^{-n-|\alpha|}$, which also gives decay estimates in $L^p_x$ for $p\in [1,\infty]$. These dispersive estimates play a key role in studying the large time behavior of solutions to the Vlasov-Poisson system near vacuum (e.g., \cite{Bardos}) or in the screened case (\cite{HKNF2}). 

\subsection{Bounds on $E$}\label{sec-bdE}

We now bound each term in the representation for $\phi$, see \eqref{rep-electricE}. Note that $S(0) = \rho[f_0]$. Therefore, using the assumption that $ \int S(0)\; dx = \iint f_0 \; dxdv =0$, the estimate \eqref{aLp-elliptic}, and the bound \eqref{est-HoscLp}, we 
have 
$$\begin{aligned}
 \| G_{\pm}^{osc}(t) \star_{x} \nabla_x \Delta_x^{-1}S(0)\|_{L^p} & \lesssim  t^{- 3(1/2-1/p)} \| \nabla_x \Delta_x^{-1}\rho[f_0]\|_{L^{p'}} 
 \\& \lesssim  t^{- 3(1/2-1/p)} \| \langle x\rangle \rho[f_0]\|_{L^1_x \cap L^\infty_x}
 \end{aligned}$$
which is bounded by $C_0 t^{- 3(1/2-1/p)}$ for $p\in [2,\infty)$. Observe that the above estimate in general fails for $p=\infty$, unless some additional assumption on the vanishing of higher moments of $\rho[f_0]$. Similarly, using $\partial_t S= - \nabla_x \cdot S^j$, where $S^j = \int vf_0(x - v t, v) \;dv$, and the boundedness of the operator $\nabla^2_x \Delta^{-1}_x$ in $L^p$ for $p\in (1,\infty)$, we bound  
$$
\begin{aligned}
 \| G_{\pm}^{osc}(t) \star_{x} \nabla_x \Delta_x^{-1} \partial_t S(0)\|_{L^p} & \lesssim  t^{- 3(1/2-1/p)} \| \nabla_x \Delta_x^{-1} \nabla_x \cdot \bj[f_0]\|_{L^{p'}} 
 \\&\lesssim  t^{- 3(1/2-1/p)} \| \bj[f_0]\|_{L^{p'}_x}
 \end{aligned}$$
for $p\in [2,\infty)$, with $\bj[f_0] = \int v f_0\;dv$. Finally, we treat the spacetime convolution term $G^{osc}_{\pm} \star_{t,x} \nabla_x \Delta_x^{-1} \partial^2_t S$. Observe that we can further integrate by part in time, yielding  
$$
\begin{aligned}
G^{osc}_{\pm} \star_{t,x} \nabla_x \Delta_x^{-1} \partial^2_t S &=   \frac{1}{\lambda_\pm(i\partial_x)} \Big[ G^{osc}_{\pm}(t,\cdot)\star_x \nabla_x \Delta_x^{-1} \partial^2_t S(0,x) 
\\&\quad- G^{osc}_{\pm}(0,\cdot)\star_x \nabla_x \Delta_x^{-1} \partial^2_t S(t,x) 
\\&\quad +  G^{osc}_{\pm} \star_{t,x} \nabla_x \Delta_x^{-1} \partial^3_t S(t,x) \Big].
\end{aligned}$$
Note that $\partial_t^2S = \sum_{ij}\partial_{x_ix_j}^2S^{ij}$ and $\partial_t^3S = \sum_{ijk}\partial_{x_ix_jx_k}^3S^{ijk}$, where $S^{ij} = \int v_i v_jf_0(x-v tv)\; dv$ and $S^{ijk} = \int v_i v_jv_kf_0(x-v tv)\; dv$. In particular, taking a smooth cutoff Fourier symbol $ \chi(k)$ with bounded support
and using Lemma \ref{sec-potential}, we bound 
$$
\begin{aligned}
\| \chi(i\partial_x)\nabla_x\Delta_x^{-1}\partial_t^2S(t)\|_{L^p_x} &\lesssim \| |\partial_x|^{1/2} S^{ij}(t)\|_{L^p_x} \lesssim \langle t\rangle^{-3(1-1/p)-1/2},
\\
\| \chi(i\partial_x)\nabla_x\Delta_x^{-1}\partial_t^3S(t)\|_{L^p_x} &\lesssim \| |\partial_x|^{3/2} S^{ijk}(t)\|_{L^p_x} \lesssim \langle t\rangle^{-3(1-1/p)-3/2},
\end{aligned}
$$ 
for $1\le p\le \infty$. Therefore, the first two terms in the above expression for $G^{osc}_{\pm} \star_{t,x} \nabla_x \Delta_x^{-1} \partial^2_t S$ are the boundary terms and can be treated as before. On the other hand, noting $G^{osc}_{\pm}$ only consists of small spatial frequency, we bound the last term by 
$$
\begin{aligned}
 \| G^{osc}_{\pm} \star_{t,x} \nabla_x \Delta_x^{-1} \partial^3_t S\|_{L^\infty_x} 
 &\lesssim 
 \int_0^{t/2} \| G^{osc}_\pm(t-s)\|_{L^\infty_x} \| \chi(i\partial_x)\partial_x \Delta_x^{-1}\partial_t^3 S(s)\|_{L^1_x} \; ds
\\&\quad + \int_{t/2}^t \| \chi(i\partial_x)\nabla_x \Delta_x^{-1} \partial^3_t S(s)\|_{L^2_x} \; ds
\\ &\lesssim 
 \int_0^{t/2} (t-s)^{-3/2} \langle s\rangle^{-3/2}\; ds
+ \int_{t/2}^t \langle s\rangle^{-3} \; ds
\\ &\lesssim \langle t\rangle^{-3/2}. 
\end{aligned}$$
Similarly, $ \| G^{osc}_{\pm} \star_{t,x}\nabla_x \Delta_x^{-1} \partial^3_t S\|_{L^2_x} \lesssim \int_0^t \| \chi(i\partial_x)\nabla_x \Delta_x^{-1} \partial^3_t S\|_{L^2_x}\; ds \lesssim 1$.  Finally, we note that the symbol $1/\lambda_\pm(k)$ is regular and compactly supported in $\{|k|\le 1\}$, and therefore $1/\lambda_\pm(i\partial_x)$ is a bounded operator from $L^p$ to $L^p$ for $1\le p\le \infty$. 
Recalling \eqref{rep-phiosc} and combining the above estimates, we have 
$$\| \nabla_x\phi^{osc}_\pm(t)\|_{L^p_x} \lesssim \langle t\rangle^{-3(1/2 - 1/p)} $$
for $2\le p<\infty$. 

Next, we bound $\nabla_x \phi^r$, which we recall from \eqref{rep-phiosc} that 
$$
\begin{aligned}
\nabla_x \phi^r(t,x) &= -\nabla_x \Delta_x^{-1} \Big[G^r \star_{t,x} S+  \cP_2(i\partial_x)S  + \cP_4(i \partial_x) \partial_t S\Big].
\end{aligned}
$$
Recall that $\cP_2$ and $\cP_4$ are defined as in Proposition \ref{prop-decompE} with $\cP_2(k) = \cO(|k|^2\langle k\rangle^{-2})$ and $\cP_4(k) = \cO(|k|^2\langle k\rangle^{-2})$ (the latter of which is compactly supported in $|k|\le 1$). 
Therefore, both $\cP_2(i\partial_x) \Delta_x^{-1}\nabla_x$ and $\cP_4(i\partial_x) \Delta_x^{-1}\nabla_x$ are bounded in $L^p$ for $1\le p\le \infty$, see Lemma \ref{lem-potential}. This proves  
$$ \| \cP_2(i\partial_x) \Delta_x^{-1}\nabla_x S(t)\|_{L^p_x} \lesssim \| S(t)\|_{L^p_x}  \lesssim  \langle t\rangle^{-3(1-1/p)} $$
for $p\in [1,\infty]$. 
The estimates for $ \cP_4(i \partial_x) \Delta_x^{-1} \nabla_x \cdot \partial_t S$ also follow identically. On the other hand, using Proposition \ref{prop-Greenphysical}, we have 
\begin{equation}\label{est-Grphysical} 
\|\partial_x^\alpha \Delta_x^{-1} G^r(t)\|_{L^p_x} \lesssim \langle t\rangle^{-4+3/p-|\alpha|} ,
\end{equation}
for any $p\in [1,\infty]$, noting that the high frequency part satisfies better decay estimates. 
Therefore, we bound 
$$  
\begin{aligned}
\| \Delta_x^{-1}G^r \star_{t,x} \partial_x S\|_{L^p_x} 
&\lesssim
 \int_0^{t/2}  \|\partial_x\Delta_x^{-1}G^r(t-s)\|_{L^p_x}\|S(s)\|_{L^1_x} \; ds
\\&\quad +  \int_{t/2}^t  \| \Delta_x^{-1}G^r(t-s)\|_{L^1_x}\|\partial_x S(s)\|_{L^p_x} \; ds
\\
&\lesssim 
 \int_0^{t/2} \langle t-s\rangle^{-5+3/p} \; ds 
+ \int_{t/2}^t  \langle t-s\rangle^{-1}\langle s\rangle^{-3(1-1/p)-1}\; ds
\\
&\lesssim
\langle t\rangle^{-3+3/p} .
\end{aligned} $$
Therefore, we obtain $\| \nabla_x \phi^r(t)\|_{L^p_x} \lesssim \langle t\rangle^{-3+3/p}$ for $1\le p\le \infty$. The higher derivative estimates follow similarly.

\appendix 

\section{Potential estimates}\label{sec-potential}

In this section, we recall some classical estimates for the Poisson equation in $\RR^3$. 

\begin{lemma}\label{lem-potential} Let $\chi(k)$ be sufficiently smooth, compactly supported in $\{|k|\le 1\}$, { and $\chi(k)=1$ for $|k|\le 1/2$}. Then, the followings hold: 

\begin{itemize}

\item[(i)] $\nabla^2_x \Delta^{-1}_x$ is a bounded operator from $L^p$ to $L^p$ for each $1<p<\infty$. In addition, for any $K>0$,
\begin{equation}\label{standard-pot1}
\begin{aligned}
 \|\nabla^2_x \Delta^{-1}_x\rho\|_{L^\infty_x} & \lesssim K^{-3}\|\rho\|_{L^1_x} + \|\rho\|_{L^\infty_x} \Big[\log (2+ \|\partial_x\rho\|_{L^\infty_x}) + \log (2+K) \Big] .
 \end{aligned}
 \end{equation}

\item[(ii)] $(1-\chi(i\partial_x)) \nabla_x \Delta^{-1}_x$ is a bounded operator from $L^p$ to $L^p$ for all $1\le p\le \infty$.  

\item[(iii)] for any $\delta>0$, $\chi(i\partial_x) |\partial_x|^\delta \nabla^2_x \Delta^{-1}_x$ is a bounded operator from $L^p$ to $L^p$ for all $1\le p \le\infty$.   

\item[(iv)] If $\int \rho(x)\; dx=0$, then 
\begin{equation}\label{aLp-elliptic} \| \chi(i\partial_x) \nabla_x \Delta^{-1}_x \rho\|_{L^p_x} \lesssim  \| \langle x\rangle\rho\|_{L^1_x \cap L^\infty_x}, \qquad \forall 1<p\le \infty.\end{equation}

%


\end{itemize}

%

\end{lemma}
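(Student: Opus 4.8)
The plan is to establish the four items of Lemma~\ref{lem-potential} separately, each being a classical fact about the Poisson kernel in $\RR^3$; throughout I write $T$ for the relevant Fourier multiplier and $K$ for its convolution kernel. For (i) the symbol $-k_ik_j/|k|^2$ is bounded, homogeneous of degree zero, and smooth away from the origin, so Calder\'on--Zygmund theory yields the $L^p\to L^p$ bound for $1<p<\infty$; concretely $T=\mathrm{p.v.}\,K_{ij}\star\cdot+c_{ij}\,\mathrm{Id}$ with $K_{ij}(z)=c\,(3z_iz_j-|z|^2\delta_{ij})/|z|^5$ homogeneous of degree $-3$ and of vanishing mean on spheres. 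For the endpoint estimate \eqref{standard-pot1} I split the convolution into the ranges $|z|<\delta$, $\delta<|z|<K$, and $|z|>K$ with $\delta\sim\min(1,\|\rho\|_{L^\infty}/\|\partial_x\rho\|_{L^\infty})$: on the far range $|K_{ij}(z)|\le K^{-3}$ gives the term $K^{-3}\|\rho\|_{L^1}$; on the intermediate range $\int_{\delta<|z|<K}|z|^{-3}\,dz\sim\log(K/\delta)$ times $\|\rho\|_{L^\infty}$ produces the logarithmic factor; and on the small range the mean-zero property of $K_{ij}$ lets one replace $\rho(x-z)$ by $\rho(x-z)-\rho(x)=O(|z|\,\|\partial_x\rho\|_{L^\infty})$, which is integrable against $|z|^{-3}$.

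For (ii) and (iii) the strategy is identical: show $K\in L^1(\RR^3)$ and conclude $L^p\to L^p$ for all $1\le p\le\infty$ by Young's inequality. For (ii), $m(k)=(1-\chi(k))\,(-ik/|k|^2)$ is $C^\infty$ on all of $\RR^3$ (the cutoff removes the singularity), vanishes near $k=0$, and obeys $|\partial^\alpha m(k)|\lesssim\langle k\rangle^{-1-|\alpha|}$ for every $\alpha$; integrating by parts $|\alpha|\ge 3$ times gives $|K(x)|\lesssim|x|^{-|\alpha|}$ for $|x|\ge1$ (rapid decay), while a dyadic decomposition in frequency gives $|K(x)|\lesssim|x|^{-2}$ for $|x|\le1$, both integrable on $\RR^3$. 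For (iii), the symbol $\chi(k)|k|^\delta(-k_ik_j/|k|^2)$ is bounded, compactly supported, smooth away from the origin, and has a conical singularity of homogeneity $\delta>0$ there, so its kernel is bounded and decays like $|x|^{-3-\delta}$ at infinity, again giving $K\in L^1$.

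For (iv) — the delicate item — I first write the kernel of $\chi(i\partial_x)\nabla_x\Delta_x^{-1}$ as $K(z)=c\,z/|z|^3+R(z)$, where $R$ is bounded near the origin and $|R(z)|\lesssim\langle z\rangle^{-4}$ (the $\langle z\rangle^{-4}$, rather than $\langle z\rangle^{-3}$, because $\chi\equiv1$ near $0$ makes the first Taylor correction of the regularization vanish), so $R\in L^1$ and $R\star\rho$ is controlled in every $L^p$ by $\|\rho\|_{L^1\cap L^\infty}\lesssim\|\langle x\rangle\rho\|_{L^1\cap L^\infty}$. For the main term $c\,z/|z|^3\star\rho=c\,\nabla_x\Delta_x^{-1}\rho$, I use $\int\rho=0$ to rewrite it as $\int\big[\tfrac{c(x-y)}{|x-y|^3}-\tfrac{cx}{|x|^3}\big]\rho(y)\,dy$ and split at $|y|=|x|/2$: on $|y|\le|x|/2$ the mean value theorem bounds the integrand by $|y|\,\langle x\rangle^{-3}$, giving a contribution $\lesssim\langle x\rangle^{-3}\|\langle x\rangle\rho\|_{L^1}$, which lies in $L^p(\RR^3)$ precisely for $p>1$; the remaining piece $\int_{|y|>|x|/2}\langle x-y\rangle^{-2}|\rho(y)|\,dy$ I bound in $L^p$ by Minkowski's integral inequality for $1<p<3/2$ (using $\|\langle x-y\rangle^{-2}\mathbf{1}_{|x|<2|y|}\|_{L^p_x}\lesssim\langle y\rangle^{3/p-2}$ with $3/p-2\le1$), and for $p\ge3/2$ by interpolating the $L^\infty$ bound (the convolution $\langle\cdot\rangle^{-2}\star|\rho|$ is bounded) with the $L^2$ bound obtained from Plancherel together with $|\hat\rho(k)|\lesssim|k|\,\|x\rho\|_{L^1}$ near $k=0$. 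Combining these yields \eqref{aLp-elliptic} for all $1<p\le\infty$.

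The main obstacle is the endpoint behavior $p\to1^+$ in (iv): the natural pointwise decay of $\chi(i\partial_x)\nabla_x\Delta_x^{-1}\rho$ is only $\langle x\rangle^{-3}$ — the borderline for $L^p(\RR^3)$ with $p>1$, and genuinely sharp, since $\rho$ need not have vanishing dipole moment — so no purely pointwise estimate can close the far region for small $p$; the Minkowski-inequality argument above, which trades the $x$-integral for a $y$-integral weighted by a single power of $\langle y\rangle$, is what makes the endpoint work. Item (i)'s $L^\infty$ estimate is the other classically subtle point, but its three-scale proof is standard.
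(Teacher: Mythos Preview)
Your proof is essentially correct and close in spirit to the paper's, but the tools differ in (ii)--(iii) and there are a couple of slips in (iv). For (ii) and (iii) the paper bypasses kernel analysis entirely via Littlewood--Paley and Bernstein: it writes $\|(1-\chi(i\partial_x))\nabla_x\Delta_x^{-1}\rho\|_{L^p}\le\sum_{q\ge0}2^{-q}\|P_q\rho\|_{L^p}\lesssim\|\rho\|_{L^p}$ and $\|\chi(i\partial_x)|\partial_x|^\delta\nabla_x^2\Delta_x^{-1}\rho\|_{L^p}\le\sum_{q\le0}2^{\delta q}\|P_q\rho\|_{L^p}\lesssim\|\rho\|_{L^p}$, which is a two-line argument; your kernel-in-$L^1$ route is valid but longer. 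For (iv) both you and the paper subtract $\frac{cx}{|x|^3}\int\rho=0$ from the Coulomb convolution; the paper then uses the explicit pointwise bound $\big|\frac{x-y}{|x-y|^3}-\frac{x}{|x|^3}\big|\le\frac{4|y|}{|x||x-y|^2}+\frac{4|y|}{|x|^2|x-y|}$, splits at $|x-y|=|x|/2$, and observes that the near-diagonal kernels $\frac{1}{\langle x\rangle}|x-y|^{-2}\mathbf{1}_{|x-y|\le|x|/2}$ and $\frac{1}{\langle x\rangle^2}|x-y|^{-1}\mathbf{1}_{|x-y|\le|x|/2}$ have uniformly bounded $L^1_y$-norms (and, by symmetry, $L^1_x$-norms), so Schur's test gives $L^p\to L^p$ for all $1\le p\le\infty$ at once, avoiding your Minkowski/interpolation split.

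Two slips in your (iv): first, the remainder $R$ in $K=cz/|z|^3+R$ is \emph{not} bounded near the origin. Its symbol $(\chi(k)-1)(-ik/|k|^2)$ decays only like $|k|^{-1}$ at infinity, so $|R(z)|\sim|z|^{-2}$ as $z\to0$ (indeed you yourself compute this in (ii)); fortunately $|z|^{-2}$ is locally integrable in $\RR^3$, so $R\in L^1$ still holds and that is all you actually use. Second, in the far region $|y|>|x|/2$ the Coulomb kernel contributes $|x-y|^{-2}$, not $\langle x-y\rangle^{-2}$, and the local singularity at $y=x$ must be handled (e.g.\ by peeling off $|x-y|\le1$ as an $L^1$-kernel convolution); your Minkowski estimate $\||x-y|^{-2}\mathbf{1}_{|x|<2|y|}\|_{L^p_x}\lesssim|y|^{3/p-2}$ survives for $p<3/2$ with the unsmoothed kernel since $|z|^{-2p}$ is then locally integrable, so the argument goes through once this is patched.
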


\begin{proof} The lemma is classical. Indeed, the first statement follows from the fact that $\nabla^2_x \Delta^{-1}_x$ is a Calderon-Zygmund operator, while the estimate \eqref{standard-pot1} is direct and can be found, e.g., in \cite[Chapter 4]{Glassey-book}. As for the second statement, we use the {standard} Littlewood-Paley decomposition and the {classical} Bernstein inequalities, {see \eqref{Berstein},} to bound 
$$
\begin{aligned}
\|(1-\chi(i\partial_x)) \nabla_x \Delta^{-1}_x \rho \|_{L^p_x} \le \sum_{q\ge 0} 2^{-q} \| P_q \rho \|_{L^p_x} \lesssim  \| \rho \|_{L^p_x} \sum_{q\ge 0} 2^{-q} \lesssim  \| \rho \|_{L^p_x},
\end{aligned}
$$
for all $1\le p\le \infty$. Note that the summation is over $q\ge 0$, since $1-\chi(k)$ has its Fourier support contained in $\{|k|\ge 1/2\}$. Similarly for the third statement, we bound 
$$
\begin{aligned}
\|\chi(i\partial_x) |\partial_x|^\delta\nabla^2_x \Delta^{-1}_x \rho \|_{L^p_x} \le \sum_{q\le 0} 2^{\delta q} \| P_q \rho \|_{L^p_x} \lesssim  \| \rho \|_{L^p_x} \sum_{q\le 0} 2^{\delta q} \lesssim  \| \rho \|_{L^p_x},
\end{aligned}
$$
for all $1\le p\le \infty$, noting the series converges, since $\delta>0$. 


Finally, we check the statement $(iv)$. The zero average assumption implies that $\rho(x)$ has vanishing Fourier coefficient $\Frho(0) =0$, and so on the support of $\chi(k)$, we have $\Frho(k) \sim  k$. We thus bound  
$$
\begin{aligned}
 \chi(i\partial_x) \nabla_x \Delta^{-1}_x \rho (x) 
 &= \int_{\{|k|\le 1\}} e^{ik\cdot x} ik |k|^{-2} \Frho(k) \; dk 
 \\ 
 & = \int_0^1 \int_{\{|k|\le 1\}} e^{ik\cdot x} ik |k|^{-2} k \cdot \nabla_k \Frho(\theta k) \; dk d\theta .
 \end{aligned}$$ 
This proves $\|  \chi(i\partial_x) \nabla_x \Delta^{-1}_x \rho\|_{L^\infty_x} \lesssim \| \nabla_k \Frho\|_{L^2_k} = \| x\rho\|_{L^2_x}$. Next, we bound its $L^p_x$ norm, which is sufficient to study for $|x|\ge 1$, since the sup norm controls the $L^p$ norm for bounded $x$. 
Recalling that the integral kernel of $\Delta^{-1}$ is $|x-y|^{-1}$, we may write 
$$ \nabla_x \Delta^{-1}_x\rho(x) = \int \frac{x-y}{|x-y|^3} \rho(y) \; dy = \int \Big[\frac{x-y}{|x-y|^3}  - \frac{x}{|x|^3} \Big]\rho(y)  \; dy$$
in which we have used the assumption that $\rho$ has zero average in $x$. Noting 
$$\Big|\frac{x-y}{|x-y|^3}  - \frac{x}{|x|^3} \Big| \le \frac{4|y|}{|x| |x-y|^2} +  \frac{4|y|}{|x|^2 |x-y|} ,$$
we bound for $|x|\ge 1$, 
$$
\begin{aligned} 
|\nabla_x \Delta^{-1}_x\rho(x)| 
& \le \frac{4}{|x|} \int |x-y|^{-2}|y\rho(y)|  \; dy +  \frac{4}{|x|^2} \int |x-y|^{-1}|y\rho(y)|  \; dy
\\
& \lesssim  \frac{1}{|x|^3} \int_{\{|x-y|\ge |x|/2\}}|y\rho(y)|  \; dy + \frac{1}{|x|} \int_{\{|x-y|\le |x|/2\}} |x-y|^{-2}|y\rho(y)|  \; dy 
\\&\quad +  \frac{1}{|x|^2} \int_{\{|x-y|\le |x|/2\}} |x-y|^{-1}|y\rho(y)|  \; dy.
\end{aligned}$$
The first term is bounded by $C_0 \langle x\rangle^{-3}\| y \rho\|_{L^1_y} $, which is finite in $L^p_x$ for any $p>1$. The second and third integrals are bounded in $L^p_x$ by $C_0 \| y\rho\|_{L^p_y}$ for any $p\ge 1$ (including the endpoint $p=1$), since the integral kernels 
$$ \frac{1}{\langle x\rangle} |x-y|^{-2} \chi_{\{|x-y|\le |x|/2\}} , \qquad  \frac{1}{\langle x\rangle^2} |x-y|^{-1} \chi_{\{|x-y|\le |x|/2\}} $$
are bounded in $L^1_y$ . This proves \eqref{aLp-elliptic} and thus completes the proof of the lemma. 
\end{proof}

%

%


\end{document}